\newcommand{\executeiffilenewer}[3]{%
 \ifnum\pdfstrcmp{\pdffilemoddate{#1}}%
 {\pdffilemoddate{#2}}>0%
 {\immediate\write18{#3}}\fi%
}
\newcommand{%
 \executeiffilenewer{.svg}{.pdf}%
 {inkscape -z -D --file=.svg %
 --export-pdf=.pdf --export-latex}%
 \input{.pdf_tex}%
}[1]{%
 \executeiffilenewer{#1.svg}{#1.pdf}%
 {inkscape -z -D --file=#1.svg %
 --export-pdf=#1.pdf --export-latex}%
 \input{#1.pdf_tex}%
}
\newtheorem{lemma}{Lemma}[section]
\newtheorem{proposition}[lemma]{Proposition}
\newtheorem{theorem}[lemma]{Theorem}
\newtheorem*{conjecture}{Conjecture}
\theoremstyle{definition}
\newtheorem{definition}[lemma]{Definition}
\newtheorem{remark}[lemma]{Remark}
\newcommand{\Z}{\mathbb{Z}}
\newcommand{\R}{\mathbb{R}}
\newcommand{\Q}{\mathbb{Q}}
\newcommand{\C}{\mathbb{C}}
\newcommand{\N}{\mathbb{N}}
\renewcommand{\epsilon}{\varepsilon}
\DeclareMathOperator{\tr}{tr}
\DeclareMathOperator{\inter}{int}
\DeclareMathOperator{\vol}{vol}
\DeclareMathOperator{\covol}{covol}
\DeclareMathOperator{\card}{card}
\DeclareMathOperator{\homo}{Hom}
\DeclareMathOperator{\ev}{ev}
\DeclareMathOperator{\area}{Area}
\title{Asymptotics of quantum representations of surface groups}
\author{JULIEN MARCH\'{E} AND RAMANUJAN SANTHAROUBANE}
\date{} % delete this line to display the current date
\address{Institut de Math\'ematiques, Universit\'e Pierre et Marie Curie, 75252 Paris c\'edex 05, France}
\email{julien.marche@imj-prg.fr}
\address{Institut de Math\'ematiques, Universit\'e Pierre et Marie Curie, 75252 Paris c\'edex 05, France}
\email{ramanujan.santharoubane@imj-prg.fr}
\begin{document}

\begin{abstract}
For a banded link $L$ in a surface times a circle, the Witten-Reshetikhin-Turaev invariants are topological invariants depending on a sequence of complex $2p$-th roots of unity $(A_p)_{p\in 2\N}$. We show that there exists a polynomial $P_L$ such that these normalized invariants converge to $P_L(u)$ when $A_p$ converges to $u$, for all but a finite number of $u$'s in $S^1$. This is related to the AMU conjecture which predicts that non-simple curves have infinite order under quantum representations (for big enough levels). Estimating the degree of $P_L$, we exhibit particular types of curves which satisfy this conjecture. Along the way we prove the Witten asymptotic conjecture for links in a surface times a circle. 
\end{abstract}
\maketitle
%\tableofcontents
\section{Statement of the results}
\subsection{Motivation and Main result}

This paper is concerned with invariants arising from Witten-Reshetikhin-Turaev $\mathrm{SU}(2)$ topological quantum field theories (TQFT) following the skein theoretical approach of \cite{bhmv}. Such a TQFT defines for $M$ a  oriented compact $3$-manifold without boundary and $L \subset M$ a banded link, a sequence of invariants $Z_p(M,L)$ indexed by even integers $p=2r \geq 6$. For a given $p$, the invariant $Z_p(M,L)$ belongs to the cyclotomic field $K_p = \Q[A] / (\phi_{2p}(A))$,where $\phi_{2p}$ denotes the $2p$-th cyclotomic polynomial\footnote{Indeed in a finite extension of it, but we will not need it here.}. To have a numerical invariant, one needs to specify an embedding of $K_p$ into $\C$ or equivalently a $2p$-th primitive root of unity $A_p \in \C$. We will denote by $\ev_{A_p}Z_p(M,L) \in \C$ the associated numerical evaluation. An interesting question is to understand the asymptotic of the quantity $\ev_{A_p}Z_p(M,L)$ as $p \to \infty$ and as $A_p$ converges to a given number on the unit circle. When $A_p=-e^{i\pi/p}$, this problem is called the Witten asymptotic expansion conjecture. Other limits have not been studied yet with the exception of some Seifert spaces studied by Lawrence and Zagier, see \cite{LZ}. 

In this paper, we focus on the case $M= \Sigma\times S^1$ where $\Sigma$ is a compact connected oriented closed surface. We look for a formula for the quantum invariant 

\[\tr_p(L)=\frac{Z_p(\Sigma\times S^1,L)}{Z_p(\Sigma\times S^1, \emptyset)}\in K_p\]
where $L \subset \Sigma \times S^1$ is a given banded link. Notice that the quantity $Z_p(\Sigma\times S^1, \emptyset)$ is the dimension of $V_p(\Sigma)$ : the $K_p$-vector space associated to $\Sigma$ by the Witten-Reshetikhin-Turaev TQFT. Moreover, $\dim V_p(\Sigma)$ is computed by the Verlinde formula and is polynomial in $p$ with degree $0,1,3g-3$ if the genus of $\Sigma$ is $g=0,1$ or $g\ge 2$ respectively. Hence the asymptotics of the quantity $\ev_{A_p} Z_p(\Sigma \times S^1,L)$ is determined by the asymptotics of $\ev_{A_p} \tr_p(L)$. The main result of this paper is that the asymptotics of $\ev_{A_p} \tr_p(L)$ is almost determined by the evaluation of a Laurent polynomial with integral coefficients depending only on $L$.
\begin{theorem} \label{thm1}
Let $L$ be a link in $\Sigma \times S^1$. There exists a Laurent polynomial $P_L \in \Z [A^{\pm 1}]$ and a finite set $\Omega_L \subset S^1$ such that for any sequence $\{A_p \}_{p \in 2 \mathbb{N}}$ such that $A_p \underset{p \to \infty}{\longrightarrow} u \notin \Omega_L$, one has $$\ev_{A_p} \tr_p(L) = P_L(u)+O\Big(\frac{1}{p} \Big)$$
In particular the polynomial $P_L \in \Z[A^{ \pm 1}]$ is well-defined and is a topological invariant of $L$.
\end{theorem}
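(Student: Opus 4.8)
The plan is to reduce the computation of $\tr_p(L)$ to skein-theoretic data in the surface $\Sigma$. Recall that $V_p(\Sigma)$ is a quotient of the Kauffman bracket skein module of a handlebody bounded by $\Sigma$, and that for $M = \Sigma \times S^1$ the pairing structure of the TQFT identifies $Z_p(\Sigma \times S^1, L)$ with a trace: if $L$ is represented by a banded link sitting in $\Sigma \times [0,1] \subset \Sigma \times S^1$, then $Z_p(\Sigma \times S^1, L) = \tr\big( \rho_p(L) \big)$, where $\rho_p(L)$ is the endomorphism of $V_p(\Sigma)$ obtained by the action of the skein of $L$ on the skein module of $\Sigma \times [0,1]$. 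Dividing by $\dim V_p(\Sigma) = \tr(\mathrm{id})$ gives $\tr_p(L)$ as a normalized trace. So the first step is to make this identification precise and to record that $\tr_p$ is linear and multiplicative enough that we may reduce to $L$ a single multicurve in $\Sigma$ times a point (pushing the banded link off the $S^1$ direction, absorbing framings and crossings into skein relations, which only changes things by powers of $A$).

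The second and central step is to diagonalize this operator. The skein algebra of $\Sigma$ acts on $V_p(\Sigma)$, and for a \emph{simple} closed curve $\gamma$ the action of $\gamma$ (colored by the $n$-th Jones–Wenzl idempotent) has a known, explicitly computable spectrum: in a basis of $V_p(\Sigma)$ adapted to a pants decomposition containing $\gamma$, multiplication by $\gamma$ is diagonal with eigenvalues of the form $-A^{2(k+1)} - A^{-2(k+1)}$ (Chebyshev-type values at the colors $k$ running over the admissible labels), each with an explicit multiplicity coming from the Verlinde fusion rules. Hence for a simple multicurve the normalized trace $\tr_p(L)$ is a \emph{weighted average} over the admissible colorings $\{A^{2(k+1)} + A^{-2(k+1)}\}$ of a product of such Chebyshev values, the weights being ratios of Verlinde-type dimensions. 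As $p \to \infty$ these admissible colorings equidistribute (after rescaling $k/p$) according to an explicit measure on an interval, and the weights, being ratios of quantum dimensions, converge to a smooth density; the sum is a Riemann sum. The key point is that for $A_p \to u$ with $u$ \emph{not} a root of unity of small order, the Chebyshev values $-u^{2(k+1)} - u^{-2(k+1)}$ range over a fixed compact set and the Riemann sum converges to an integral which one evaluates in closed form — and this closed form turns out to be a Laurent polynomial in $u$, because the oscillatory contributions cancel. This is exactly the Witten asymptotic conjecture computation in this special case; the bad set $\Omega_L$ is where denominators in the Verlinde weights vanish or the equidistribution degenerates, which is a finite set of roots of unity. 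For a \emph{non-simple} link one first applies skein relations to write $L$ as a $\Z[A^{\pm1}]$-combination of simple multicurves (resolving all self-intersections), with coefficients that are Laurent polynomials in $A$; evaluating at $A_p$ and using linearity plus the simple case, together with the fact that these Laurent-polynomial coefficients converge to their values at $u$, gives the result with $P_L$ the corresponding combination of the simple-curve limits and $\Omega_L$ the union of the finite bad sets.

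The main obstacle I expect is the second step: proving that the weighted Verlinde sum converges to a \emph{Laurent polynomial} in $u$, with an $O(1/p)$ error that is uniform in sequences $A_p \to u$ away from $\Omega_L$. Getting convergence of the Riemann sum is standard; the content is (a) identifying the limit integral explicitly and checking the oscillatory terms cancel so that the answer is a genuine Laurent polynomial with \emph{integer} coefficients (integrality presumably forced by the topological-invariance remark: the limit is a $\Z$-linear functional of the skein class of $L$, and skein classes of multicurves form a $\Z$-lattice), and (b) controlling the error term uniformly — near points of $\Omega_L$ the Verlinde weights blow up, so one needs quantitative estimates (Euler–Maclaurin with explicit remainder, or a Gauss-sum / stationary-phase bound) showing the $O(1/p)$ is locally uniform on $S^1 \setminus \Omega_L$. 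Once the simple case is done with a uniform error, the reduction from general $L$ to simple multicurves is purely formal. Finally, well-definedness and topological invariance of $P_L$ follow because $\ev_{A_p}\tr_p(L)$ is a topological invariant for each $p$ and its limit at infinitely many points $u$ (all of $S^1\setminus\Omega_L$) determines the Laurent polynomial uniquely.
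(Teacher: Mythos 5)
Your proposal has two genuine gaps. First, the reduction step fails: you assume $L$ can be pushed into $\Sigma\times[0,1]\subset\Sigma\times S^1$ and then resolved into simple multicurves acting as curve operators on $V_p(\Sigma)$. But a general banded link in $\Sigma\times S^1$ winds around the $S^1$ factor (this winding is detected by $H_1(\Sigma\times S^1)$ and cannot be isotoped away; the knots $\hat\gamma$ relevant to the AMU application all have winding number one), so $\tr_p(L)$ is not simply a normalized trace of a skein of $\Sigma\times[0,1]$ acting on $V_p(\Sigma)$. The paper deals with this by cutting along $\Sigma\times\{\mathrm{pt}\}$, which produces colored points, and by generating $\mathcal{K}(\Sigma\times S^1)$ by \emph{weighted} multicurves $[\gamma,w]$ via the Frohman--Gelca description of $\mathcal{K}(T\times[0,1])$ as the symmetric quantum torus; the trace formula then involves the Verlinde dimensions $C_{\gamma,p}(n,m)$ of the surgered surface $\Sigma_\gamma$ with marked points, not just eigenvalues of curve operators in a pants basis. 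Your sketch covers only the sub-case $L\subset\Sigma\times[0,1]$, which is essentially the setting already treated in \cite{mn}.

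Second, the analytic mechanism you describe is the wrong one for $u\notin\Omega_L$. After the combinatorial reduction, one faces sums of the shape $\frac{1}{p}\sum_n A_p^{2\beta n}f\big(\frac{n}{p}\big)$ with $f$ piecewise polynomial (coming from lattice-point counts in the Verlinde polytopes). When $A_p\to u$ with $u^{2\beta}\neq 1$ this is \emph{not} a Riemann sum converging to an integral: the phase rotates by a fixed nonzero angle at each step, and an Abel-summation/Taylor argument (Proposition \ref{limits}) shows the whole sum is $O(1/p)$. The Riemann-sum regime, where an honest integral appears, occurs exactly when $u^{2\beta}=1$, i.e.\ on the excluded set $\Omega_L$ (this is the Witten-conjecture-type limit). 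Consequently your claim that the limit integral ``evaluates in closed form to a Laurent polynomial with integer coefficients'' has no support and is where the real content lies: in the paper, $P_L$ is not obtained as the value of an asymptotic integral but as the exact polynomial term of a \emph{cyclic expansion} $\tr_p(L)=P_L(A)+\frac{1}{p}\sum_{\alpha,n}A^{2\beta n+\alpha}f_\alpha(\tfrac{n}{p})+O(\tfrac1p)$, where $P_L$ arises algebraically from the skein reduction (powers of $A$ from Kauffman relations, and $\eta[\gamma,w]=2^k$ if $w=0$, $0$ otherwise), and the oscillatory remainder is then killed for $u\notin\Omega_L$. Your appeal to topological invariance to ``force integrality'' does not replace this: one needs an a priori splitting of $\tr_p(L)$ into an integral Laurent polynomial plus a term that vanishes in the limit, and producing that splitting (via the polytope point-counting estimates of Proposition \ref{comptage} and Lemma \ref{integral}) is the main technical work of the paper.
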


The polynomial $P_L$ can be viewed as a generalization of the Kauffman bracket of a link in $S^3$. Indeed if $L \subset B^3$ is a link inside a $3$-ball embedded in $\Sigma \times S^1$, the polynomial $P_L$ is nothing but the usual Kauffman bracket of $L$. For a more complicated link inside $\Sigma \times S^1$, this polynomial can be computed algorithmically, see Section \ref{skein}. The existence of such a polynomial is not clear a priori, we note that Costantino already built one in the case of a $S^2 \times S^1$ using other methods (see \cite{costantino}).

\subsection{Cyclic expansions}

In order to prove Theorem \ref{thm1}, we introduce the key notion of \emph{cyclic expansion}. We will denote by $\mathcal{C}$ the vector space of maps $f:\R\to\R$ which are continuous with compact support and piecewise polynomial. 

\begin{definition}
We will say that the sequence $\tr_p(L)$ has a cyclic expansion if there exists $P_L \in \Z[A^{\pm 1}]$, an integer $\beta \geq 0$ and a family $f_0,\ldots,f_{2\beta-1}\in \mathcal{C}$ such that 
\begin{equation}\label{cyclic}
\tr_p(L)=P_L(A)+\frac{1}{p}\sum_{\alpha=0}^{2\beta-1}  \sum_{n\in\Z } A^{2\beta n+ \alpha} f_{\alpha} \left (\frac{n}{p}\right)+O \left(\frac{1}{p}\right)
\end{equation}
\end{definition}
We need to explain the meaning of $O(\frac{1}{p})$ in Equation (\ref{cyclic}). Indeed, we have to interpret both sides as elements of $K_p\otimes \R$ endowed with the norm
$||x||_p=\inf\{\max_n |c_n|, x=\sum_{n=0}^p c_nA^n\}$.

%the left-hand side is an element of $K_p$ whereas the right hand side is an element of $\R[A^{\pm 1}]$.
%Consider the natural surjection of $\Q$-algebras $\Q[A]/(A^{2p}-1)\to K_p$ and tensor it by $\R$ over $\Q$. This gives a map $\pi:\R[A]/(A^{2p}-1)\to K_p\otimes_{\Q} \R$. For $y=\sum_{n=0}^{2p-1}c_nA^n$ we set $||y||_p=\max\{|c_n|, n=0,\ldots,2p-1\}$ and for $x\in K_p$, $||x||_p=\inf_{x=\pi(y)}||y||_p$. 
%This formula defines a norm on $K_p$ which is invariant by multiplication with $A$. The $O(\frac{1}{p})$ is understood in that sense. 
%\\

The main technical result of this article is the following theorem. 

\begin{theorem} \label{thm existence}
For any banded link $L \subset \Sigma \times S^1$, the sequence $\tr_p(L)$ admits a cyclic expansion.

\end{theorem}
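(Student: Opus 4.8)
The plan is to realise $\tr_p(L)$ as a normalised trace of a curve operator on a Witten-Reshetikhin-Turaev module and to read off its asymptotics from recoupling theory in a pants basis. After a generic isotopy, cut $\Sigma \times S^1$ along a fibre $\Sigma \times \{\ast\}$ transverse to $L$: this presents $L$ as a banded tangle $T \subset \Sigma \times [0,1]$ whose endpoint set $Q \subset \Sigma$ on $\Sigma \times \{0\}$ equals the one on $\Sigma \times \{1\}$, and the gluing axiom of the TQFT of \cite{bhmv} gives $Z_p(\Sigma \times S^1, L) = \operatorname{Tr}(Z_p(T))$ with $Z_p(T) \in \operatorname{End}(V_p(\Sigma, Q))$, where $V_p(\Sigma, Q)$ is the module of $\Sigma$ with the points of $Q$ coloured by $1$. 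Since $Z_p(\Sigma \times S^1, \emptyset) = \dim V_p(\Sigma)$, we obtain $\tr_p(L) = \operatorname{Tr}(Z_p(T)) / \dim V_p(\Sigma)$.

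Next, fix a pants decomposition of $\Sigma$ whose curves are transverse to the arcs of $T$ and whose dual trivalent graph $\Gamma$ passes through $Q$; this gives an orthogonal basis $(e_c)$ of $V_p(\Sigma, Q)$ indexed by the admissible integral colourings $c$ of the edges of $\Gamma$, so that $\tr_p(L) = \frac{1}{\dim V_p(\Sigma)} \sum_c m_p(c)$ with $m_p(c) = \langle e_c, Z_p(T) e_c\rangle / \langle e_c, e_c\rangle$. Resolving the crossings of $T$ by the Kauffman relation and decomposing the resulting multicurves and arc tangles writes $Z_p(T)$ as a finite $\Z[A^{\pm 1}]$-linear combination of products of a \emph{bounded} number of elementary operators (single fusions and crossings, and slides of a marked point across an edge), each shifting the colouring by a bounded vector and with matrix coefficients of bounded complexity given by quantum integers; in particular $Z_p(T) e_c$ is supported on colourings $c'$ with $c' - c$ in a fixed finite subset of the edge lattice, and $\operatorname{Tr}(Z_p(T)) = \sum_c m_p(c)$ picks out only the diagonal channel. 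It follows that, for $c$ away from the walls of the colour polytope, the diagonal entry $m_p(c)$ is \emph{exactly} a fixed rational expression $R\big(A, (A^{\ell_i(c)})_i\big)$ in $A$ and in the monomials attached to finitely many integral affine forms $\ell_i$ on colourings, polynomial in $A^{\pm 1}$ up to a denominator which is a product of quantum integers $[\ell_k(c)]_A = (A^{2\ell_k(c)} - A^{-2\ell_k(c)})/(A^2 - A^{-2})$.

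Write $R = R_{\mathrm{pol}} + R_{\mathrm{frac}}$, Laurent-polynomial part plus a sum of proper fractions. Collecting the monomials of $R_{\mathrm{pol}}(A, (A^{\ell_i(c)}))$ according to the linear form $v$ they induce in $c$, the terms with $v = 0$ contribute, after division by $\dim V_p(\Sigma)$ and an Euler--Maclaurin estimate for the Riemann sum over the scaled colour polytope, a Laurent polynomial $P_L(A) + O(1/p)$ (whose coefficients turn out to be integers). Each term with $v \neq 0$ contributes $\frac{1}{\dim V_p(\Sigma)} \sum_c A^{\langle v, c\rangle} \phi(c/p; A)$, where $\phi(\cdot;A)$ is a $\Z[A^{\pm1}]$-combination of piecewise-polynomial functions of compact support; writing $\langle v, c\rangle = d\,\ell(c)$ with $\ell$ primitive and summing first over the slices $\{\ell(c) = n\}$ turns this into $\frac{1}{p} \sum_{n \in \Z} A^{dn} \psi(n/p; A) + O(1/p)$, with $\psi(\cdot;A)$ of the same type. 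Expanding each such $\psi$ and regrouping all these contributions over a common even period $2\beta$ divisible by the various $d$'s produces exactly a term $\frac{1}{p} \sum_{\alpha = 0}^{2\beta - 1} \sum_{n \in \Z} A^{2\beta n + \alpha} f_\alpha(n/p)$ with $f_\alpha \in \mathcal{C}$; the class of such tails is closed under finite sums and under multiplication by $A^{\pm 1}$, which is all one needs to recombine the various pieces.

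The delicate point is to control $R_{\mathrm{frac}}$ together with the colourings near the walls of the polytope --- the \emph{resonant} colourings $c$ for which some quantum integer $[\ell_k(c)]_A$ is small and $m_p(c)$ has no uniform bound. One must show that the total contribution of these colourings to $\tr_p(L)$ is $O(1/p)$ for the norm $\|\cdot\|_p$, which combines a counting estimate (the number of resonant $c$ is of lower order in $p$, carrying a summable weight measuring the depth of the resonance), a bound of the shape $\|1/[\ell]_A\|_p = O(p)$ in $K_p \otimes \R$, and the same exponential-sum cancellation ($\sum_n A^{dn}(\cdot)$ with $d \neq 0$) already used in the previous step. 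This estimate is the technical heart of the argument, and it is essentially here that the finite exceptional set $\Omega_L$ of Theorem \ref{thm1} has its origin.
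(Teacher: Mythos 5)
Your overall strategy (trace of the tangle operator in a pants basis, Riemann sums over the scaled colour polytope, oscillatory sums for the nonzero linear forms) is a legitimate alternative route, but as written it has a genuine gap exactly where you flag it: the contribution of $R_{\mathrm{frac}}$ and of the colourings near the walls is never actually bounded, only described as ``the technical heart''. This is not a routine verification. The diagonal coefficients $m_p(c)$ carry denominators which are products of quantum integers $[\ell_k(c)]$, and near a wall these denominators are not invertible in any uniform way: the number of colourings in a slice $\{\ell_k(c)=\text{const}\}$ is of order $p^{N-1}$ while $\dim V_p(\Sigma)\sim p^N$, so a slice on which the coefficient has $\|\cdot\|_p$-norm of order $p$ (your own estimate $\|1/[\ell]_A\|_p=O(p)$) contributes $O(1)$, not $O(1/p)$; to kill it you would need either genuine cancellation in the exponential sum uniformly in the resonant regime, or a finer expansion of $m_p(c)$ near the walls, and neither is supplied. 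Note also that the estimate $\|1/[\ell]_A\|_p=O(p)$ in $K_p\otimes\R$ is itself a claim requiring proof, since the norm $\|\cdot\|_p$ is defined by an infimum over representations in the power basis and does not follow immediately from evaluation bounds at roots of unity. Two smaller points: the assertion that $m_p(c)$ is ``exactly a fixed rational expression'' away from the walls needs an argument (it is the content of a nontrivial fusion/$6j$ computation for curve operators), and the integrality of the coefficients of $P_L$ is asserted (``turn out to be integers'') rather than proved.

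It is worth contrasting this with how the paper sidesteps the problem you cannot close. Instead of computing the operator of the tangle itself, the paper first reduces inside the skein module: cyclic expansions are stable under the Kauffman relations (Proposition \ref{skein_cyclo}), and $\mathcal{K}(\Sigma\times S^1)$ is spanned by weighted multicurves $[\gamma,w]$ (Proposition \ref{multi_span}), thanks to the Frohman--Gelca description of $\mathcal{K}(T\times[0,1])$. For a weighted multicurve the relevant matrix coefficients are computed in Lemma \ref{calcul} and are \emph{monomials} in $A$ times Kronecker deltas mod $p$ --- no quantum-integer denominators ever appear, so there are no resonant colourings to control. The whole analytic content then reduces to counting lattice points in the Verlinde polytope and its affine slices (Proposition \ref{comptage}, Lemma \ref{integral}), which directly produces the piecewise-polynomial densities $f_\alpha$ with an $O(1/p)$ error. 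If you want to salvage your direct approach, the missing wall estimate would have to be established with the same level of care; alternatively, inserting the Frohman--Gelca reduction at the start removes the need for it altogether.
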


The proof of this theorem consists in a careful counting of integral points in various polytopes related to TQFT. It is postponed to Section \ref{proof cyclic}. The interest of having such a cyclic expansion is that the study of asymptotics is reduced to the following question.

Let $f$ be in $\mathcal{C}$, $\beta$ be a non zero integer and $A_p$ be a convergent sequence of $2p$-th primitive roots of unity. What is the asymptotics of $\frac{1}{p} \sum_{n\in\Z} A_p^{2\beta n} f (\frac{n}{p})$ as $p$ tends to infinity? This problem can be solved with elementary analytic tools as follows.
\begin{proposition} \label{limits}
Let $f \in \mathcal{C}$ and $\beta$ be a positive integer. Let $A_k$ be a sequence of $2p_k$-th primitive roots of unity with $p_k$ a strictly increasing sequence of even integers. 
\begin{enumerate}
\item If $\lim\limits_{k\to\infty}A_k = u$ with $u^{2\beta} \neq 1$  one has 
$$\frac{1}{p_k} \sum_{n\in\Z} A_{k}^{2\beta n} f \Big(\frac{n}{p_k}\Big) = O \Big( \frac{1}{p_k} \Big) $$

\item If $\lim\limits_{k\to\infty}A_k = u$ with $u^{2\beta} = 1$ we write $A_k=ue^{i\pi\theta_k}$ so that $\lim\limits_{k\to \infty}\theta_k=0$.
If $p_k\theta_k$ diverges when $k$ goes to infinity then 
$$\frac{1}{p_k} \sum_{n\in\Z} A_{k}^{2\beta n} f \Big(\frac{n}{p_k}\Big)=O\Big(\frac{1}{p_k\theta_k}\Big).$$

\item In the same setting as (2) suppose that $p_k\theta_k$ does not diverge. As $A_k^{2p_k}=1$, the sequence $\theta_k p_k$ takes discrete values and up to extracting a subsequence, one can suppose that it is constant equal to $ \frac{\sigma}{\beta}$, i. e. $A_k=ue^{\frac{ i\pi \sigma}{\beta p_k}}$ for some odd integer $\sigma$. Then, we have 
$$\frac{1}{p_k} \sum_{n\in\Z} A_{k}^{2\beta n} f \Big(\frac{n}{p_k}\Big) = \int_{\R} e^{2 i \pi x \sigma}f(x) dx +O\Big(\frac{1}{p_k}\Big).$$
\end{enumerate}
\end{proposition}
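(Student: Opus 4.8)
The plan is to deduce all three cases from one Abel‑summation estimate. Fix $g\in\mathcal{C}$, a positive integer $N$, and $q\in S^1\setminus\{1\}$. Bounding the (suitably truncated, hence finite) partial sums $S_n=\sum_{m\le n}q^m$ of the geometric series by $|S_n|\le 2/|q-1|$ and summing by parts — the boundary terms vanishing since $g$ has compact support — one gets
\[
\sum_{n\in\Z}q^n g\big(\tfrac{n}{N}\big)=\sum_{n\in\Z}S_n\Big(g\big(\tfrac{n}{N}\big)-g\big(\tfrac{n+1}{N}\big)\Big).
\]
As every element of $\mathcal{C}$ is Lipschitz with compact support, $\sum_n|g(\tfrac{n+1}{N})-g(\tfrac{n}{N})|\le\mathrm{TV}(g)<\infty$, which yields the master bound
\[
\Big|\frac1N\sum_{n\in\Z}q^n g\big(\tfrac{n}{N}\big)\Big|\le\frac{2\,\mathrm{TV}(g)}{N\,|q-1|}.
\]
I will apply it with $N=p_k$, $g=f$, and $q=q_k:=A_k^{2\beta}$, which satisfies $q_k^{p_k}=A_k^{2\beta p_k}=1$.

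Case (1) is then immediate: $q_k\to u^{2\beta}\neq1$, so $|q_k-1|$ is bounded away from $0$ for $k$ large and the master bound gives $O(1/p_k)$. For case (2), using $u^{2\beta}=1$ and $A_k=ue^{i\pi\theta_k}$ one has $q_k=e^{2i\pi\beta\theta_k}$; since $\theta_k\to0$, for $k$ large $q_k\neq1$ (otherwise $p_k\mid\beta$) and, by $|\sin t|\ge\tfrac{2}{\pi}|t|$ for $|t|\le\tfrac{\pi}{2}$,
\[
|q_k-1|=2\,|\sin(\pi\beta\theta_k)|\ge 4\beta\,|\theta_k|,
\]
so the master bound gives $O\big(1/(p_k|\theta_k|)\big)$, as asserted.

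For case (3) I would first run the arithmetic with roots of unity: since $u^{2\beta}=1$ one may write $u=e^{i\pi m/\beta}$, and then $A_k^{2p_k}=1$ forces $p_k(\tfrac{m}{\beta}+\theta_k)\in\Z$, hence $p_k\theta_k\in\tfrac1\beta\Z$; when $(p_k\theta_k)_k$ is bounded it takes finitely many values, so along a subsequence it is a constant, necessarily of the form $\sigma/\beta$ for some integer $\sigma$ whose parity is pinned down by the primitivity of $A_k$ as in the statement. Consequently $\beta\theta_k=\sigma/p_k$ and $q_k=u^{2\beta}e^{2i\pi\beta\theta_k}=e^{2i\pi\sigma/p_k}$, so
\[
\frac1{p_k}\sum_{n\in\Z}A_k^{2\beta n}f\big(\tfrac{n}{p_k}\big)=\frac1{p_k}\sum_{n\in\Z}e^{2i\pi\sigma\,(n/p_k)}f\big(\tfrac{n}{p_k}\big)
\]
is precisely a Riemann sum of mesh $1/p_k$ for $\int_\R e^{2i\pi\sigma x}f(x)\,dx$. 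Since $x\mapsto e^{2i\pi\sigma x}f(x)$ is Lipschitz with compact support, the standard estimate $\big|\tfrac1N\sum_n h(n/N)-\int_\R h\big|\le\mathrm{Lip}(h)\cdot\mathrm{diam}(\operatorname{supp} h)/N$ produces the error term $O(1/p_k)$.

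The analytic ingredients — Abel summation and the Riemann‑sum error bound, whose constants $\mathrm{TV}(f)$, $\mathrm{Lip}(f)$ and $\mathrm{diam}(\operatorname{supp} f)$ are uniform because $f\in\mathcal{C}$ — are elementary, so I expect no real difficulty there. The part that needs genuine care, and where I would concentrate the write‑up, is the bookkeeping in (2) and (3): bringing $A_k^{2\beta}$ to the clean exponential forms above, ruling out $q_k=1$ for large $k$ (which would make the sum of size $1$, not $o(1)$), and the subsequence extraction making $p_k\theta_k$ constant together with the parity statement for $\sigma$.
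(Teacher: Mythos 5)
Your proposal is correct and is essentially the paper's argument in different clothing: the Abel-summation ``master bound'' with partial geometric sums controlled by $2/|q-1|$ is just a repackaging of the paper's step of multiplying $H_k=\frac{1}{p_k}\sum_n A_k^{2\beta n}f(\frac{n}{p_k})$ by $(1-A_k^{2\beta})$ and estimating the telescoped differences of $f$, and your case (3) is the same Riemann-sum identification with the same $O(1/p_k)$ error. The only divergence is cosmetic (explicit constants via $\mathrm{TV}(f)$ and $\mathrm{Lip}(f)$, and the routine checks that $q_k\neq 1$ and that $p_k\theta_k$ is discrete), so no further comment is needed.
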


\begin{proof}
We set $ H_{k} = \frac{1}{p_k}\sum_{n\in\Z } A_k^{2\beta n} f(\frac{n}{p_k})$. We compute 
$$
(1-A_k^{2\beta}) H_{k} = \frac{1}{p_k}\sum_{n\in\Z} A_k^{2\beta n} \left(f\Big(\frac{n}{p_k} \Big) -   f\Big(\frac{n-1}{p_k}\Big)\right)  = O\Big(\frac{1}{p_k}\Big)$$
The last equality is obtained by applying a Taylor expansion of $f$ away from a finite number of values: this is possible since $f$ is piecewise polynomial. 

In the first case, since $u^{2\beta} \neq 1$, we deduce that $H_{k} = O(\frac{1}{p_k})$. 

In the second case, we simply observe that $1-A_k^{2\beta}\sim -2i\pi\beta\theta_k$ hence $H_k=O\big(\frac{1}{p_k\theta_k}\big)$ and we can conclude. 

In the last case, we write $H_k=\frac{1}{p_k}\sum_{n\in \Z}e^{2 i\pi\sigma\frac{n}{p_k}}f(\frac{n}{p_k})$ and recognize a Riemann sum. This gives $H_k=\int_{\R} e^{2 i \pi  \sigma x} f(x) dx +O\big(\frac{1}{p_k}\big)$. 

\end{proof}

We observe that Proposition \ref{limits} and Theorem \ref{thm existence} imply directly Theorem \ref{thm1}.

 \subsection{Applications for the AMU conjecture for surface groups}

%\subsubsection{AMU conjecture for surface groups}
The polynomial $P_L$ associated to the cyclic expansion can be used for proving the AMU conjecture. Let $\Sigma$ be a closed surface of genus at least $2$, $p = 2r$ be an even integer and 

\begin{equation}\label{rhop} \rho_p : \pi_1(\Sigma) \longrightarrow \prod_{n=1}^{r-1} \mathrm{PAut}(V_p(\Sigma,n))\end{equation}
 be the quantum representation\footnote{Strictly speaking, the definition given in \cite{Kob-San} was in $\mathrm{SO}(3)$-TQFT but the exact same can be done in the $\mathrm{SU}(2)$ setting.} considered by Koberda and the second author in \cite{Kob-San}. Here $V_p(\Sigma,n)$ denotes the vector space associated  by the $\mathrm{SU}(2)$ Witten-Reshetikhin-Turaev TQFT to the surface $\Sigma$ equipped with a banded point colored by $n$. It corresponds to the $\mathrm{SU}(2)$-TQFT at level $k=r-2$ in the geometric setting. Here, we use the notation of \cite{bhmv} with a shift of 1 concerning the color $n$.  A consequence of the AMU conjecture stated by Andersen, Masbaum and Ueno in \cite{amu} is the following (see \cite[Section 7]{Kob-San2} for more detail on this implication).
\begin{conjecture}[AMU conjecture for surface groups]
 If $\gamma \in \pi_1(\Sigma)\setminus\{1\}$ is not a power of a simple element then $\rho_p(\gamma)$ has infinite order for all $p$ big enough. 
\end{conjecture}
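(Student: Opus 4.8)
The plan is to relate the order of $\rho_p(\gamma)$ to the behaviour of the polynomials $P_L$ of Theorem~\ref{thm1} for a well-chosen family of links obtained from $\gamma$. By the Birman exact sequence, $\rho_p^{(n)}$ is the restriction to the point-pushing subgroup $\pi_1(\Sigma)\subset\mathrm{Mod}(\Sigma,*)$ of the $\mathrm{SU}(2)$ quantum representation acting on $V_p(\Sigma,n)$; hence for $\gamma\in\pi_1(\Sigma)$ and $N\ge 0$ the class $\rho_p^{(n)}(\gamma)^N$ is projectively represented by the honest TQFT operator $T_p=T_p(\gamma^N)$ attached to the mapping cylinder of $\mathrm{Push}(\gamma^N)$. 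Gluing the two ends of this cylinder yields $\Sigma\times S^1$ together with the banded knot $\widehat\gamma_N$ which follows $\gamma^N$ in the $\Sigma$-direction, winds once around $S^1$, carries its natural framing, and is colored by $n$; by functoriality of the TQFT,
\[\ev_{A_p}\tr_p(\widehat\gamma_N)=\frac{1}{\dim V_p(\Sigma)}\,\tr\big(T_p(\gamma)^N\big)\ \text{up to a root of unity from the framing anomaly.}\]
Fixing once and for all a sequence $A_p\to u$ with $u$ avoiding the countable set $\bigcup_N\Omega_{\widehat\gamma_N}$, Theorem~\ref{thm1} gives $\ev_{A_p}\tr_p(\widehat\gamma_N)\to P_{\widehat\gamma_N}(u)$ for every $N$.

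Next I would argue by contrapositive. Since $\rho_p(\gamma)$ has finite order iff each $\rho_p^{(n)}(\gamma)$ does, fix a color $n$ with $V_p(\Sigma,n)\neq 0$ and suppose $\rho_p^{(n)}(\gamma)$ has finite order for infinitely many $p$. For such $p$ the operator $T_p(\gamma)$ has a scalar power, hence is diagonalizable with all eigenvalues roots of unity $\nu_1(p),\dots,\nu_{d_p}(p)$, where $d_p=\dim V_p(\Sigma,n)$. Applying a complex embedding extending $\ev_{A_p}$ we obtain
\[\ev_{A_p}\tr_p(\widehat\gamma_N)=\zeta_{N,p}\int_{S^1}z^{\,N}\,d\mu_p(z),\qquad \mu_p=\frac{1}{\dim V_p(\Sigma)}\sum_{j=1}^{d_p}\delta_{\nu_j(p)},\quad |\zeta_{N,p}|=1,\]
a positive measure supported on $S^1$ of total mass $d_p/\dim V_p(\Sigma)$, which is bounded since for $g\ge 2$ both $\dim V_p(\Sigma,n)$ and $\dim V_p(\Sigma)$ are Verlinde polynomials of degree $3g-3$. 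In particular $|P_{\widehat\gamma_N}(u)|\le\lim_p d_p/\dim V_p(\Sigma)$ for all $N$. Moreover, bookkeeping the anomaly (one can take $\zeta_{N,p}=\zeta(p)^N$ by adjusting the framing) and passing to a weakly convergent subsequence $\mu_p\to\mu$, one gets a positive measure $\mu$ on $S^1$ with $P_{\widehat\gamma_N}(u)=\int_{S^1}z^N\,d\mu(z)$ for all $N\ge 0$: the sequence $\big(P_{\widehat\gamma_N}(u)\big)_{N\ge 0}$ is a Herglotz moment sequence, so every Hermitian Toeplitz matrix $\big[m_{i-j}\big]_{0\le i,j\le k}$ with $m_\ell=P_{\widehat\gamma_\ell}(u)$, $m_{-\ell}=\overline{m_\ell}$, is positive semidefinite. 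In the special case where the order of $\rho_p^{(n)}(\gamma)$ stays bounded along the subsequence one gets more crudely $P_{\widehat\gamma_{N+M}}=\pm P_{\widehat\gamma_N}$ for a fixed $M$, so the $A$-spans of the $P_{\widehat\gamma_N}$ remain bounded.

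To finish, for the \emph{particular types of curves} in the statement I would use the skein-theoretic algorithm of Section~\ref{skein} to estimate $P_{\widehat\gamma_N}$ and contradict the previous paragraph: namely, a lower bound on the $A$-span of $P_{\widehat\gamma_N}$ growing (at least linearly) with $N$ whenever $\gamma$ is not a power of a simple closed curve, together with enough control on a few extremal coefficients to exhibit a choice of color $n$, exponent $k$, and base point $u$ for which the Toeplitz matrix above fails to be positive semidefinite. This forces $\rho_p(\gamma)$ to have infinite order for all $p$ large. The main obstacle is exactly this last estimate: the recursive skein computation resolves the many self-crossings of $\gamma^N$ inside $\Sigma\times S^1$, and one must rule out catastrophic cancellation among the extremal terms — presumably by isolating, via a maximal–Euler-characteristic or maximal-winding argument in the relevant skein module, a leading contribution that provably survives and whose growth can be read off from the geometry of $\gamma$ (self-intersection number, or length in a pants-adapted generating set). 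This is where the hypothesis on $\gamma$ enters, and the reason only particular such curves can be treated, leaving the general AMU conjecture open.
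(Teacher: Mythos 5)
You are trying to prove a statement that the paper itself records only as a \emph{conjecture}: the paper never proves the AMU conjecture for surface groups, it proves a sufficient criterion (Theorem \ref{thm AMU}: if $P_{\gamma,n}$ is neither $0$ nor a unit for some color $n$, then AMU holds for $\gamma$) together with a concrete family of curves satisfying that criterion (Theorem \ref{incompressible}). Your opening step is the same as the paper's: realize $\tr\rho_{p,n}(\gamma)/\dim V_p(\Sigma)$ as the normalized invariant $\tr_p(\hat{\gamma},n)$ of the knot $\hat{\gamma}\subset\Sigma\times S^1$ and feed it into Theorem \ref{thm1}. But after that you miss the key idea that makes the paper's criterion work, namely the \emph{integrality} of $P_{\gamma,n}$ exploited through Lemma \ref{parseval}: an element of $\Z[A^{\pm1}]$ whose sup norm on $S^1$ is at most $1$ must be $0$ or $\pm A^m$. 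Because of this, a single knot ($N=1$) suffices: if $\rho_{p,n}(\gamma)$ had finite order along a subsequence, the trivial bound $|\ev_{A_p}\tr_p(\hat{\gamma},n)|\le 1$ forces $|P_{\gamma,n}(u)|\le 1$ on $S^1$, hence $P_{\gamma,n}\in\{0\}\cup(\Z[A^{\pm1}])^{\times}$. All of your machinery with powers $\gamma^N$, weak limits of eigenvalue measures, Herglotz moment sequences and Toeplitz positivity is therefore unnecessary for the criterion, and it also introduces delicate points you would have to justify (finite order in $\mathrm{PAut}$ only gives a scalar power, so ``all eigenvalues are roots of unity'' and the uniform mass bound $d_p/\dim V_p(\Sigma)$ need care, as does the anomaly bookkeeping $\zeta_{N,p}=\zeta(p)^N$).

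The genuine gap is your final step: you need a lower bound on the span of $P_{\widehat\gamma_N}$ growing with $N$ (plus control of extremal coefficients) for \emph{every} $\gamma$ that is not a power of a simple element, and you acknowledge you cannot prove it. That estimate is where the entire difficulty of the conjecture sits (the paper explicitly compares the non-triviality of the $P_{\gamma,n}$ to the open non-triviality of colored Jones polynomials), so as written your proposal establishes neither the conjecture nor even the paper's partial results. The paper's way around the ``catastrophic cancellation'' you worry about is not a general maximal-winding argument but a restriction of the class of curves: for Euler-incompressible $\gamma$ with $N$ double points it computes $P_{\gamma,3}$ via the state sum \eqref{skein_eq}, bounds every non-extremal term using Lemma \ref{bound} and Lemma \ref{inequalities}, and shows that the unique surviving extremal contributions (all-positive or all-negative smoothings) are sums of positive terms by the area formula of Lemma \ref{area} and Proposition \ref{algo}, so the coefficients in degree $\pm 4N$ are nonzero. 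If you want a workable route, replace your growth-in-$N$ program by this kind of degree/positivity analysis of $P_{\gamma,n}$ for a fixed $n$, and invoke Lemma \ref{parseval} rather than moment positivity; for general non-simple $\gamma$ the needed non-triviality of $P_{\gamma,n}$ remains open, which is exactly why the statement is a conjecture.
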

Here a non-trivial element of $\pi_1(\Sigma)$ is said simple if it is freely homotopic to a simple closed curve. We denote by $\left( \Z[A^{\pm1}]\right) ^{\times} = \{ \pm A^m \}_{ m \in \Z }$ the group of units in $\Z[A^{\pm 1}]$. 
Using cyclic expansions we define the following :

\begin{definition}\label{gammachapeau}
Let $\gamma \in \pi_1(\Sigma)$ be represented by a loop $\gamma : S^1 \to \Sigma$. We define $\hat{\gamma} \subset \Sigma \times S^1$ as the knot $ t \in S^1 \mapsto (\gamma(t),t) \in \Sigma \times S^1$ with arbitrary banded structure. For $n \in \N \setminus \{0 \} $, we denote by $(\hat{\gamma},n)$ the banded link $\hat{\gamma}$ colored by $n$ and by $\tr_p(\hat{\gamma},n)$ its normalized trace (observe that $n=1$ is the trivial color and $n=2$ the usual one). Because the indeterminacy of the banded structure, the polynomial $P_{\gamma,n}$ is well-defined up to the multiplication by a unit.
\end{definition} 

\begin{theorem} \label{thm AMU}
Let $\gamma \in \pi_1(\Sigma)$. If for some $n$,  $P_{\gamma,n}$ is neither zero nor a unit, then the AMU conjecture for surface groups holds for $\gamma$.
\end{theorem}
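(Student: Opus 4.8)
The plan is to translate ``$\rho_p(\gamma)$ has finite order'' into a uniform bound on all Galois conjugates of $P_{\gamma,n}$, and then to derive a contradiction from a Kronecker-type argument.

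\smallskip

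First I would set up the dictionary between $\tr_p(\widehat\gamma,n)$ and the operator $\rho_p(\gamma)$. Cutting $\Sigma\times S^1$ along a fiber $\Sigma\times\{*\}$, the banded knot $\widehat\gamma$ colored by $n$ becomes a colored arc in $\Sigma\times[0,1]$ joining $(\gamma(*),0)$ to $(\gamma(*),1)$ while dragging the marked point along $\gamma$; the associated TQFT operator is, up to the framing anomaly — i.e. up to a unit $\pm A^m$ of $\Z[A^{\pm1}]$ coming from the choice of banded structure — a linear lift $\widetilde{\rho_p}(\gamma)\in\mathrm{GL}(V_p(\Sigma,n))$ of the point-pushing element of \cite{Kob-San}. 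Closing up the cobordism then gives
\[\tr_p(\widehat\gamma,n)=\frac{\tr\bigl(\widetilde{\rho_p}(\gamma)\bigr)}{\dim V_p(\Sigma)}\qquad\text{up to a unit,}\]
which is consistent with $P_{\gamma,n}$ being defined only up to a unit. I would also record the mirror relation: the orientation-reversing self-diffeomorphism $(x,t)\mapsto(x,\overline t)$ of $\Sigma\times S^1$ carries $\widehat\gamma$ to $\widehat{\gamma^{-1}}$, so $\tr_p(\widehat{\gamma^{-1}},n)(A)$ equals $\tr_p(\widehat\gamma,n)(A^{-1})$ up to a unit; hence $P_{\gamma^{-1},n}(A)=P_{\gamma,n}(A^{-1})$ up to a unit, and $|P_{\gamma^{-1},n}(u)|=|P_{\gamma,n}(u)|$ on $S^1$ since the coefficients are real.

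\smallskip

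Next I would argue by contradiction: suppose the AMU conjecture fails for $\gamma$, so that $\rho_p(\gamma)$ has finite order $N_p$ for all $p$ in an infinite set $S$. Then $\widetilde{\rho_p}(\gamma)^{N_p}=\kappa_p\,\mathrm{Id}$; since the determinant of a quantum representation is a root of unity (the Maslov/framing anomaly), $\det\widetilde{\rho_p}(\gamma)$ is a root of unity, hence so is $\kappa_p$, hence all eigenvalues of $\widetilde{\rho_p}(\gamma)$ are roots of unity. Consequently, for any evaluation $A_p$,
$|\ev_{A_p}\tr_p(\widehat\gamma,n)|=|\tr(\widetilde{\rho_p}(\gamma))|/\dim V_p(\Sigma)\le \dim V_p(\Sigma,n)/\dim V_p(\Sigma)$,
and the same bound holds for every Galois conjugate, i.e. for $|\ev_{\zeta}\tr_p(\widehat\gamma,n)|$ at each $2p$-th root of unity $\zeta$. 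Using the cyclic expansion (Theorem~\ref{thm existence}) together with Proposition~\ref{limits} to absorb the $O(1/p)$ discrepancy between $\tr_p(\widehat\gamma,n)$ and $P_{\gamma,n}$ at roots of unity, and letting $p\to\infty$ along $S$ so that the $2p$-th roots equidistribute on $S^1$, I would deduce $\sup_{u\in S^1}|P_{\gamma,n}(u)|\le \ell$, where $\ell$ is the limit of the ratio of Verlinde polynomials $\dim V_p(\Sigma,n)/\dim V_p(\Sigma)$.

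\smallskip

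Finally I would conclude by a Kronecker--Mahler-measure argument: a nonzero $Q\in\Z[A^{\pm1}]$ whose modulus on $S^1$ is bounded by a sufficiently small constant must be a monomial $\pm A^m$, i.e. a unit; applied to $P_{\gamma,n}$ this contradicts the hypothesis. I expect the main obstacle to be exactly this last step, namely making the bound $\ell$ effective enough for the Kronecker argument to bite: the crude triangle inequality only yields the constant $\ell=\lim_p\dim V_p(\Sigma,n)/\dim V_p(\Sigma)$, so one must either arrange $\ell=1$ by the right choice of normalization and color, or sharpen the estimate — e.g. by running it simultaneously for all powers $\gamma^{k}$ (so that $|P_{\gamma^{k},n}(u)|\le \ell$ for every $k\in\Z$) and feeding this into Parseval's identity for the eigenvalue-multiplicity vector of $\widetilde{\rho_p}(\gamma)$, forcing those multiplicities, and with them $P_{\gamma,n}$, to be trivial. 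The identification of $\tr_p$ with a rescaled trace of $\rho_p$ and the passage from ``finite order'' to a uniform bound on all Galois conjugates are the conceptual heart of the argument; Proposition~\ref{limits} enters only to pass from $\tr_p(\widehat\gamma,n)$ to the polynomial $P_{\gamma,n}$.
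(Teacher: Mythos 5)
Your skeleton is the same as the paper's: write $\tr_p(\hat{\gamma},n)$ as $\tr\rho_{p,n}(\gamma)/\dim V_p(\Sigma)$, use finite order to bound the evaluations at primitive $2p$-th roots of unity, pass to the limit (Theorem~\ref{thm existence} plus Proposition~\ref{limits}, i.e. Theorem~\ref{thm1}) to bound $P_{\gamma,n}$ on the circle, and finish by the integrality argument, which is exactly the paper's Lemma~\ref{parseval}. Much of your apparatus is superfluous for this: no Galois conjugates or equidistribution are needed, since Proposition~\ref{limits} applies to an arbitrary sequence of primitive $2p_k$-th roots converging to $u$, and for each $u$ outside the finite exceptional set one can choose such a sequence supported on the $p_k$'s where $\rho_{p_k,n}(\gamma)$ is assumed to have finite order; the mirror relation for $P_{\gamma^{-1},n}$ plays no role.

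The gap is the one you flag yourself and do not close. The endgame needs $\sup_{z\in S^1}|P_{\gamma,n}(z)|\le 1$: only then does $\sum_l a_l^2\le 1$ force $P_{\gamma,n}=0$ or $\pm A^m$. Your argument produces only $\sup_{z\in S^1}|P_{\gamma,n}(z)|\le\ell$ with $\ell=\lim_p \dim V_p(\Sigma,n)/\dim V_p(\Sigma)$, and this constant is not $\le 1$ in the relevant cases (for odd $n$ the Verlinde asymptotics give $\ell=n$, e.g. $\ell=3$ for the color used in Theorem~\ref{incompressible}), so a bound of the form $\sum_l a_l^2\le\ell^2$ does not force $P_{\gamma,n}$ to be zero or a unit. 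The two repairs you propose (renormalizing so that $\ell=1$, or bounding all powers $\gamma^k$ simultaneously and feeding this into Parseval for the eigenvalue multiplicities of the lift) are only sketched, so the proof stops exactly at the decisive step. For comparison, the paper's proof asserts the inequality $|\ev_{A_p}\tr_p(\hat{\gamma},n)|\le 1$ for every primitive $2p$-th root of unity when $\rho_{p,n}(\gamma)$ has finite order and then quotes Lemma~\ref{parseval}; to turn your proposal into a complete proof you must either justify that inequality (your own computation of $\ell$ shows it does not follow from the crude eigenvalue-modulus bound alone) or actually carry out one of your proposed refinements.
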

\begin{proof}

We have by definition $\tr_p(\hat{\gamma},n)=\frac{\tr \rho_{p,n}(\gamma)}{\dim V_p(\Sigma)}$ where $\rho_{p,n}$ is the $n$-th factor of the representation $\rho_p$ defined in Equation \ref{rhop}. If $\rho_{p,n}(\gamma)$ has finite order, then its eigenvalues are roots of unity and we have for any primitive root of unity of order $2p$ the inequality 
$$|\ev_{A_p} \tr_p(\hat{\gamma},n)|\le 1.$$
By Proposition \ref{limits}, for all but a finite number of $u\in S^1$, one has 
$$\lim\limits_{A_p\to u} \ev_{A_p} \tr_p(\hat{\gamma},n)=P_{\gamma,n}(u).$$
Hence, if $\rho_{p_k,n}$ has finite order for a sequence $p_k$ going to infinity, then $|P_{\gamma,n}(u)|\le 1$ for all $u\in S^1$. 
The theorem is then a direct consequence of Lemma \ref{parseval}.
\end{proof}

\begin{lemma}\label{parseval}
Let $P \in \Z[A^{\pm 1}]$ such that $\sup_{z \in S^1} |P(z)| \leq 1$. Then one has $P=0$ or $P \in \left( \Z[A^{\pm1}]\right) ^{\times}$.
\end{lemma}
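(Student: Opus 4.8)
The plan is to use the fact that a Laurent polynomial $P \in \Z[A^{\pm 1}]$ with $\sup_{z \in S^1}|P(z)| \le 1$ has all its complex coefficients bounded in a way that forces them to be integers of absolute value at most $1$, with at most one of them nonzero. Write $P(A) = \sum_{n=a}^{b} c_n A^n$ with $c_n \in \Z$. First I would reduce to an honest polynomial: multiplying by the unit $A^{-a}$ does not change $\sup_{z\in S^1}|P(z)|$ (since $|A^{-a}|=1$ on $S^1$) and does not change whether $P$ is zero or a unit, so we may assume $P(A) = \sum_{n=0}^{d} c_n A^n$ with $c_0 \ne 0$ (if $P \ne 0$).

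Next I would invoke Parseval's identity, which is why the lemma is named \texttt{parseval}. Parametrizing $S^1$ by $z = e^{i\theta}$, the $c_n$ are exactly the Fourier coefficients of the function $\theta \mapsto P(e^{i\theta})$, so
\begin{equation*}
\sum_{n=0}^{d} |c_n|^2 = \frac{1}{2\pi}\int_0^{2\pi} |P(e^{i\theta})|^2\, d\theta \le \sup_{z\in S^1}|P(z)|^2 \le 1.
\end{equation*}
Since the $c_n$ are integers, each $|c_n|^2$ is a nonnegative integer, and their sum is at most $1$; hence exactly one of them equals $1$ in absolute value and all the others vanish (the case where they all vanish being $P=0$). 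Therefore $P(A) = \pm A^m$ for some $m$, i.e. $P \in (\Z[A^{\pm 1}])^\times$.

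There is essentially no serious obstacle here; the only point requiring a little care is the normalization step (clearing the negative powers of $A$) so that the $c_n$ genuinely appear as Fourier coefficients on the circle, and noting that multiplication by $\pm A^m$ preserves both the hypothesis and the two alternatives in the conclusion. One could equally avoid Parseval and argue by the maximum modulus principle or by a direct contour-integral extraction of a single coefficient, but the Parseval argument is the cleanest and matches the lemma's label, so that is the route I would take.
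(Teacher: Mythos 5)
Your proof is correct and is essentially the paper's own argument: both apply Parseval's identity to deduce $\sum |c_n|^2 \le 1$, and then use integrality of the coefficients to conclude $P=0$ or $P=\pm A^m$. The preliminary reduction to nonnegative exponents is harmless but unnecessary, since the exponentials $e^{in\theta}$, $n\in\Z$, are already orthonormal and Parseval applies directly to Laurent polynomials.
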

\begin{proof}
We can write $P = \sum_{l} a_l A^l $ with the $a_j$'s in $\Z$. Let us define the continuous function $f(t)=P(e^{2 i \pi t})$. One has $$\sum_{l} |a_l |^2= \int_{0}^1 |f(t)|^2 \mathrm{dt} \le 1.  $$ Since the $a_j$'s are in $\Z$ this implies that $P=0$ or $P = \pm A^{m}$ for some $ m \in \Z$.
\end{proof}

Notice that determining if the polynomials $P_{\gamma,n}$ belong to $\left( \Z[A^{\pm1}]\right) ^{\times} \cup \{0 \} $ is similar to the problem of determining if a non-trivial knot in $S^3$  have non-trivial colored Jones polynomials, which is believed to be true. The following Theorem \ref{incompressible} will provide examples where the degree of the polynomial $P_{\gamma,3}$ can be controlled. This is reminiscent to the estimation of the colored Jones polynomials of alternating knots. 

\subsubsection{A formula for $P_L$}
We give here a formula for $P_L$ in the case where $L$ is a banded link in $\Sigma\times S^1$ whose projection on the first factor is a multicurve. 
\begin{proposition}\label{algo}
Let $p:\Sigma\times S^1\to \Sigma$ be the first projection map and consider a collection of disjoint and non-parallel annuli $T_1,\ldots, T_n\subset \Sigma$. Let $L\subset \Sigma\times S^1$ be a banded link projecting to $\bigcup_{j=1}^nT_j$ and set $L_j=L\cap p^{-1}(T_j)$. We have $P_L=\prod_{j=1}^n P_{L_j}$ so that one can reduce to the case when $L$ is inside an annulus times a circle. We put $P_L=1$ if $L$ is the empty link. 

Let $L$ be a banded link in $U=S^1\times S^1\times[0,1]$ which is a union of banded knots $\gamma_i\subset S^1\times S^1\times\{t_i\}$ for $0<t_0<\cdots<t_k<1$. Let $x_0,\ldots,x_k\in H_1(U,\Z)$ be the corresponding homology classes.  
We have $$P_L=2\sum_{\substack{\epsilon_1,\ldots,\epsilon_k=\pm 1\text{ s.t.}\\ x_0+\epsilon_1x_1+\cdots \epsilon_k x_k=0}}A^{2\area(x_0,\epsilon_1x_1,\ldots,\epsilon_kx_k)}$$
where $\area(y_0,\ldots,y_k)=0$ is the area of the polygon in $H_1(U\times S^1,\R)$ whose sides are the vectors $y_0,\ldots,y_k$. 
\end{proposition}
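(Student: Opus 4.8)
The plan is to reduce the computation of $P_L$ to the cyclic expansion of $\tr_p(L)$ via an explicit skein-theoretic evaluation in $S^1\times S^1\times[0,1]\times S^1 = T^2\times S^1\times[0,1]$, and then extract the constant term as $A_p\to u$ generically. The multiplicativity statement $P_L=\prod_j P_{L_j}$ should follow formally: the complement of the annuli $\bigsqcup T_j$ decomposes $\Sigma\times S^1$ into pieces, and since the $V_p(\Sigma)$-trace is computed by cutting $\Sigma$ along a pants decomposition subordinate to the $T_j$'s, the normalized trace factors as a product over the $T_j$-neighborhoods up to the contribution of the complementary surface, which is exactly $\tr_p(\emptyset)=1$ after normalization. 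So I would first dispatch multiplicativity, then concentrate all the work on the case $L\subset U\times S^1$ with $U$ an annulus.

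For the annulus case, I would compute $\tr_p(L)$ directly. Each banded knot $\gamma_i\subset S^1\times S^1\times\{t_i\}$, colored by $1$ (the trivial color, so we may assume; or keeping track of colors $c_i$) — wait, in the statement $L$ is just a link, so colors are trivial — acts on $V_p(T^2)$, which has a basis $\{e_0,\dots,e_{r-1}\}$ indexed by colors, with the curve $\hat\gamma_i$ of homology class $x_i\in H_1(T^2\times S^1)$ acting as a ``Dehn-twist-like'' operator. Concretely, in the torus the skein module $V_p(T^2)\otimes S^1$ is computed by the $S,T$ matrices: a curve of slope $x_i$ in $T^2\times S^1$, when we trace, contributes a character sum. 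Stacking the $\gamma_i$ at heights $t_i$ and closing up in the $S^1$ direction, $\tr_p(L)$ becomes $\sum$ over an intermediate color $c$ of a product of matrix coefficients, which by Gauss sum / Verlinde-type manipulation collapses to a sum over sign choices $\epsilon_i=\pm1$ with the constraint $x_0+\epsilon_1x_1+\cdots+\epsilon_kx_k=0$, each term weighted by $A^{2\area(\cdots)}$ times a factor coming from $\dim V_p(T^2)=r$ in the denominator — which is why the leading constant $2$ appears (the $\pm1$ ambiguity of Weyl orbits folds $r\to r$ but the normalization by $\dim$ against the Poisson-summation/Riemann-sum limit of Proposition \ref{limits}(3) with $\sigma$ ranging yields the factor $2$). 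I would make this precise by writing $\tr_p(L)$ as $\frac1p\sum_{n\in\Z}(\text{character})\cdot(\text{something supported near }n/p)$ and matching it to the cyclic expansion form \eqref{cyclic}, so that $P_L$ is literally the $\beta$-periodic constant part.

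The technical heart — and the main obstacle — is the area bookkeeping: identifying the quadratic phase that accumulates when one slides $k$ banded circles past each other in $T^2\times S^1$ with the signed area of the closed polygon with edge vectors $y_0=x_0$, $y_i=\epsilon_ix_i$ in $H_1(T^2\times S^1,\R)\cong\R^2$ (once we fix the $S^1$-direction and use the torus lattice). This phase is exactly the framing/linking correction in the Kauffman bracket skein relation applied iteratively; the self-linking of each banded knot contributes the diagonal terms and the linking numbers $\langle x_i,x_j\rangle$ contribute the off-diagonal terms, and assembling them gives a quadratic form in the $\epsilon_i$ which, under the closure constraint $\sum\epsilon_ix_i=0$, equals twice the polygon area (hence the $A^{2\area}$). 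I would verify this first for $k=1$ (a single curve plus its mirror — pure self-linking, trivial polygon, constant term $2$, reproving that $P_{\hat\gamma}$ for $\gamma$ essential in the torus is $\pm A^m\cdot 2$-ish, consistent with $P$ being a genuine obstruction only when nonzero and non-unit), then for $k=2$ by an explicit resolution of the two crossings, and then induct on $k$, each step using that the area form is additive under concatenation of edge vectors. The sign conventions (orientation of $\area$, choice of banded structure absorbed into the unit ambiguity noted in Definition \ref{gammachapeau}) will require care but are not conceptually deep; the real content is the quadratic-form-equals-area identity, which I expect to occupy the bulk of the argument.
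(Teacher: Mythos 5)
Your proposal has two genuine gaps, both in places where you declare the step easy. First, the multiplicativity $P_L=\prod_j P_{L_j}$ does not follow ``formally'' from cutting along a pants decomposition with the complement contributing $1$: at finite $p$ the normalized trace does \emph{not} factor. For two disjoint, non-parallel curves lying in a fiber $\Sigma\times\{pt\}$, the normalized trace of the product of the two curve operators on $V_p(\Sigma)$ differs from the product of the normalized traces, because the admissibility conditions on colorings of the complementary subsurface couple the two factors. Only the limiting polynomial factors, and in the paper this is extracted from the linearity of $\eta$ (Proposition \ref{skein_cyclo}) together with the evaluation $\eta([\gamma,w])=2^k$ if $w=0$ and $0$ otherwise (Theorem \ref{existence}): after expanding each $L_j$ in the $\langle a,b\rangle_T$ generators of its annulus, only the all-zero weights survive, and the surviving constants multiply. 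Some such asymptotic input cannot be bypassed by a formal cutting argument.

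Second, your annulus computation treats $\tr_p(L)$ as if it were a trace on $V_p(T^2)$, computable from $S,T$-matrix coefficients and a Gauss-sum collapse. It is a trace over $V_p(\Sigma)$: each coloring $(n,m)$ of the two sides of the annulus enters weighted by the multiplicity $C_{\gamma,p}(n,m)=\dim V_p(\Sigma_\gamma,n,m)$, and identifying which part of the resulting sum is the $p$-independent term of the cyclic expansion --- hence $P_L$, including the factor $2$ --- is exactly the lattice-point count in polytopes of Proposition \ref{comptage}; your account of the factor $2$ (Weyl folding against Proposition \ref{limits}(3)) is not the actual mechanism, which is simply $\eta(\langle 0,0\rangle_T)=2\,\eta(\emptyset)=2$. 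Conversely, what you flag as the technical heart --- the quadratic-phase-equals-area bookkeeping --- is the easy step: it is the iterated Frohman--Gelca product-to-sum formula (Lemma \ref{area}), an algebraic identity in $\mathcal{K}(T\times[0,1])$ proved by an immediate induction, with the area appearing via Stokes once the polygon closes. The paper's route, which yours should be corrected to, is: write $L$, up to a unit, as $(x_0)_T\cdots(x_k)_T$, expand by Lemma \ref{area}, and apply $\eta$ using Theorem \ref{existence} to kill every term with $x_0+\epsilon_1x_1+\cdots+\epsilon_kx_k\neq 0$.
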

%Observe that the orientation of $H_1(A\times S^1,\R)$ is chosen to be compatible with the coorientation of $A$ so that the formula for $P_L$ does not depend on this choice. 

This formula will be useful for proving that $P_{\gamma,3}$ is non-trivial for some curves $\gamma\in \pi_1(\Sigma)$. In order to describe them, we introduce the notion of Euler incompressibility. 
\begin{definition}
A cycle in a graph $G$ is called {\it Eulerian} if it visits every edge at most once. 
An graph $G$ embedded in $\Sigma$ is said \emph{Euler-incompressible} when no Eulerian cycle of $G$ bounds a disc in $\Sigma$.
\end{definition}
%Observe that the Euler-incompressibility property is stable under the operations of deleting edges and splitting vertices. 

\begin{theorem}\label{incompressible}
Let $\Sigma$ be a closed surface of genus $g\ge 2$ and $\gamma:S^1\to \Sigma$ be an embedding with $N$ transverse double points whose image $\Gamma=\gamma(S^1)$ is Euler-incompressible. Then, up to a unit, we have  $$P_{\gamma,3}=\sum_{i=-4N}^{4N} c_i A^i$$
where $c_{-4N}$ and $c_{4N}$ are not zero. In particular, it is not trivial if $N>0$ and Theorem \ref{thm AMU} applies.
\end{theorem}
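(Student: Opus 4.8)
The plan is to compute $P_{\gamma,3}$ via the formula of Proposition \ref{algo}, which expresses it (up to a unit) as a sum $2\sum A^{2\area(\cdots)}$ over sign choices $\epsilon_i=\pm 1$ for which the corresponding signed sum of homology classes vanishes in $H_1(U\times S^1,\Z)$. Here $U\times S^1$ is the relevant ambient space after the reduction to the annulus-times-circle case (one annulus for each double point region, or rather for the collection of annuli that a regular neighborhood of $\Gamma$ in $\Sigma$ deformation retracts onto). First I would set up the combinatorics: cutting $\Sigma$ along $\Gamma$ exhibits $\hat\gamma$, the knot $t\mapsto(\gamma(t),t)$, as living in a thickened neighborhood which, pushed into $\Sigma\times S^1$, becomes a banded link whose projection is the $4$-valent graph $\Gamma$; at each of the $N$ double points one must resolve and the contribution to the exponent of $A$ comes from writing the relevant area as a sum of local contributions $\pm 1$ at the crossings together with the winding in the $S^1$ direction.

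The core of the argument is the extremal coefficient estimate. Color $n=3$ corresponds to the adjoint/spin-$1$ representation, so the skein evaluation of each banded knot $\gamma_i$ running around $S^1$ contributes a factor supported in a band of width proportional to the color; summing over the $N$ crossings, one gets that $\area(x_0,\epsilon_1 x_1,\ldots,\epsilon_k x_k)$ ranges over an interval of half-width at most $4N$ (the factor $4$ being $(n-1)^2=4$ times the contribution per double point, after normalizing by the shift that makes $P_{\gamma,3}$ well-defined only up to a unit). This gives the stated range $-4N\le i\le 4N$. For the non-vanishing of $c_{\pm 4N}$ I would argue that the extreme value of the area is attained by a \emph{unique} admissible sign pattern $(\epsilon_i)$ — namely the one making all local crossing contributions have the same sign — so there is no cancellation at the top and bottom degrees; this is where Euler-incompressibility enters. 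The hypothesis that no Eulerian cycle of $\Gamma$ bounds a disc guarantees exactly that the homology classes $x_i$ are ``independent enough'': any nontrivial admissible relation $x_0\pm x_1\pm\cdots\pm x_k=0$ corresponds to an Eulerian subcycle, and if such a cycle bounded a disc the corresponding area could be made to collapse, destroying the extremal term. Conversely, because $\Gamma$ is Euler-incompressible, the \emph{trivial} relation (all classes needed, signs aligned) is the only one realizing the extreme area, so its coefficient is a nonzero power of $2$.

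I would carry this out in the following order: (i) reduce to $U\times S^1$ and record the precise dictionary between sign patterns $(\epsilon_i)$, Eulerian subcycles of $\Gamma$, and homological relations, making the area function explicit; (ii) prove the degree bound $|i|\le 4N$ by bounding $|\area|$ crossing-by-crossing; (iii) identify the unique sign pattern achieving $\area=4N$ and show, using Euler-incompressibility to rule out competing patterns and using the ``bounds a disc'' hypothesis to rule out area-reducing modifications, that $c_{4N}\ne 0$; (iv) apply the symmetry $A\mapsto A^{-1}$ (or a parity/orientation-reversal argument) to deduce $c_{-4N}\ne 0$; (v) invoke Theorem \ref{thm AMU} to conclude, since a polynomial with two nonzero extreme coefficients of exponents differing by $8N>0$ is neither zero nor a unit in $\Z[A^{\pm 1}]$. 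The main obstacle I anticipate is step (iii): translating the purely topological Euler-incompressibility condition into the statement that the extremal area is realized with multiplicity one requires care in bookkeeping the banded (framing) contributions and in checking that no two distinct admissible sign patterns accidentally yield the same maximal exponent — this is the crux where the combinatorics of Eulerian cycles and the geometry of discs in $\Sigma$ must be matched precisely.
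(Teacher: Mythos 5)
There is a genuine gap, and it sits exactly at the point you flag as the crux. Your plan applies Proposition \ref{algo} to $\hat\gamma$ colored by $3$, but that proposition only computes $P_L$ for links whose projection to $\Sigma$ is a multicurve; $\hat\gamma$ projects to the $4$-valent graph $\Gamma$, so before any such formula applies you must expand each double point via the colored skein relation for two strands of color $3$. That relation has \emph{three} terms, $A^{4}(\text{positive smoothing}) + [2](\text{unsmoothed vertex}) + A^{-4}(\text{negative smoothing})$, and the middle term, together with the expansion of the idempotent $f_2$ on the edges of the resulting graphs, produces powers of $[2]=A^2+A^{-2}$ and (after deleting edges) boundary links some of whose components bound discs, each contributing a further factor $-[2]$. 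These contributions raise the $A$-degree beyond the naive $4|a-b|$ coming from the smoothed crossings, so the bound $|i|\le 4N$ cannot be obtained by ``bounding the area crossing-by-crossing'': one must prove that for every partial smoothing $S$ and every idempotent state $\xi$ the total degree $4|a_S-b_S|+2(n_S-s_\xi)+\deg\eta(\partial\hat\Gamma_{S,\xi})$ is at most $4N$, with equality only for the all-positive (resp.\ all-negative) smoothing. This is where the paper's two combinatorial lemmas enter: a bound on $\deg\eta(\partial\hat G)$ by twice the number of disc-bounding boundary components (a case analysis in the annulus using the Frohman--Gelca product-to-sum formula), and Euler-characteristic inequalities $e+u\le 2n$, $v\le n$ for at most quadrivalent Euler-incompressible graphs, with a rigidity statement identifying the equality case. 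Your proposal has no substitute for these estimates, so steps (ii) and (iii) are unsubstantiated as written.

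Relatedly, the mechanism you propose for non-vanishing of $c_{\pm 4N}$ (``a disc-bounding Eulerian cycle would collapse the area, and incompressibility forces a unique extremal sign pattern'') is not how the hypothesis actually functions. Euler-incompressibility is used to guarantee that after smoothing all vertices coherently, no component of the resulting multicurve is a trivial circle (and, via the inequalities above, that every \emph{other} state has degree strictly less than $4N$); the extremal coefficient is then shown to be a positive integer by an explicit local computation of $\eta$ on colored annular skeins, e.g.\ $[(1,0),3]=(2,0)_T+[\emptyset]$ and evaluation of products via the area formula, not by a uniqueness-of-sign-pattern argument inside Proposition \ref{algo}. The $\epsilon_i$ in Proposition \ref{algo} index terms of the product-to-sum formula in the torus skein algebra, not crossing resolutions of $\Gamma$, so identifying them with smoothings conflates two different expansions. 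Step (v) of your plan is fine, but to make the whole argument work you would need to reconstruct the state-sum bookkeeping and the two graph-theoretic lemmas, which are the real content of the proof.
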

%We give examples for which Theorem \ref{thm AMU} applies with this algorithm. Let $\gamma$ be the following loop on a genus $2$ surface.
Figure \ref{filling} shows an example of curve which fulfills the assumptions of Theorem \ref{incompressible}. Note that this loop is filling the surface in the sense that the complement of its image is a disjoint union of topological discs. According to Kra's Theorem \cite[Theorem 1.1]{Kra}, this loop is sent to a pseudo-Anosov element in the Birman Exact Sequence. To the authors' knowledge, this is the first example of a pseudo-Anosov element in the mapping class group of a genus $g\ge 2$ surface satisfying the AMU conjecture. Moreover we note that any loop whose image's complementary is a single disc fulfills the assumptions of Theorem \ref{incompressible}.
\begin{figure}[htbp]
\includegraphics[scale = 0.13]{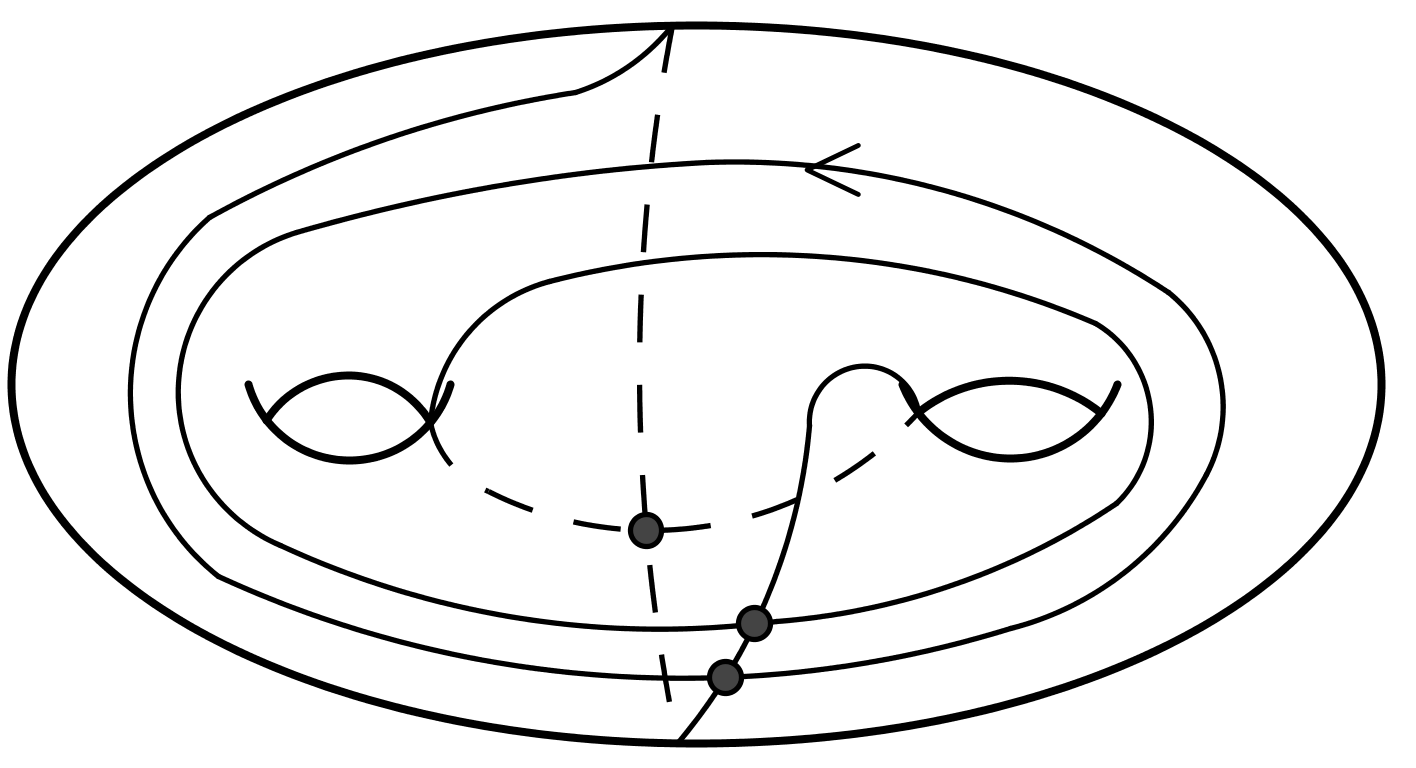}
\caption{An Euler incompressible filling curve}\label{filling}
\end{figure}

%\subsubsection{Skein module of $\Sigma\times S^1$}
\subsection{Geometric interpretation of the cyclic expansion}

The standard root of unity $A_p=-e^{i\pi/p}$ is generally used in TQFT as the vector space $V_p(\Sigma)$ is Hermitian in that case. Most of the results or conjectures about the asymptotics of TQFT concern this setting. The Witten conjecture is the most well-known and is related to our work in the case when the underlying manifold is $\Sigma\times S^1$. 
Consider for more generality an odd integer $\sigma$ and replace $A_p$ with $A_p^\sigma$. Observe that for these roots to be primitive of order $2p$ we need that $2p$ is coprime to $\sigma$ which we assume from now on. 

Suppose that $L\subset \Sigma\times S^1$ is a banded link having a cyclic expansion as in Equation \eqref{cyclic}. Then this expansion governs the asymptotics in the sense that we have 
$$\lim_{p\to \infty} \ev_{A_p^\sigma} \tr_p(L)=P_L(-1)+\int_\R e^{2i\pi x \sigma}\left(\sum_{\alpha=0}^{2\beta-1}(-1)^\alpha f_\alpha(x)\right)dx.$$

Let $X(\Sigma\times S^1)$ be the space of conjugacy classes of irreducible representations $\rho:\pi_1(\Sigma\times S^1)\to $ SU$_2$. Denoting by $t$ the generator of $\pi_1(S^1)$, such a representation has to satisfy $\rho(t)=\pm 1$ as $t$ is central. Hence $X(\Sigma\times S^1)$ is a union of two copies of $X(\Sigma)$, defined in the same way. This manifold has dimension $6g-6$ and is endowed with a natural symplectic form $\omega$ and volume form $\nu=\frac{\omega^{3g-3}}{(3g-3)!}$. We set $\nu_g=\int_{X(\Sigma)}\nu$. 

When $\sigma=1$, the Witten conjecture predicts the following asymptotics where $L=L_1\cup\cdots\cup L_k$. 
$$\lim_{p\to \infty} \ev_{A_p} \tr_p(L)=\frac{1}{2\nu_g}\int_{X(\Sigma\times S^1)}\prod_{i=1}^k (-\tr \rho(L_i))d\nu(\rho).$$
This formula was proved in \cite{mn} in the case when $L$ lies inside $\Sigma\times [0,1]\subset \Sigma\times S^1$. This formula has also an intersection with \cite{Andersen}, where special links in finite order mapping tori were studied. Theorem \ref{witten-sigma} will cover the general case where $L$ and $\sigma$ are arbitrary. Unfortunately, the geometric meaning of the formula is less clear when $\sigma>1$. 

\begin{theorem}\label{witten-sigma}
%Let $\mathcal{K}(\Sigma\times S^1,-1)$ be the Kauffman module of $\Sigma\times S^1$ at $A=-1$ and $\sigma$ be an odd integer.  We define $\Lambda_\sigma: \mathcal{K}(\Sigma\times S^1,-1)\to \C$ to be the linear form given by $\Lambda_\sigma([L])=\lim\limits_{p\to \infty}\ev_{A_p^\sigma}\tr_p(L)$. 
Let $\sigma$ be an odd integer and set $A_p=-e^{i\pi/p}$. If $L\subset \Sigma\times S^1$ projects to $\Sigma$ without crossings, we have the following formula.
$$\lim\limits_{p\to \infty}\ev_{A_p^\sigma}\tr_p(L)=\frac{1}{2\nu_g}\int_{X(\Sigma\times S^1)}\prod_{i=1}^k (-\tr \rho(L_i)^\sigma)d\nu(\rho).$$
\end{theorem}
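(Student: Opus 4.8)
The plan is to combine the cyclic expansion of Theorem \ref{thm existence} with the analytic limit computed in Proposition \ref{limits}(3), and then identify the resulting integral with the stated symplectic-volume integral over the character variety. Since $L$ projects to $\Sigma$ without crossings, Proposition \ref{algo} gives a completely explicit description of $P_L$ and, more importantly, of the piecewise-polynomial functions $f_\alpha$ appearing in the cyclic expansion: they arise from counting integral points in the Verlinde-type polytopes attached to the multicurve $p(L)$, fibered over the ``height'' coordinate $n/p$. Concretely, I would first redo the integral-point count of Section \ref{proof cyclic} keeping track of the parameter $x = n/p$, so that $\sum_\alpha (-1)^\alpha f_\alpha(x)$ is recognized as (a signed sum of) the volumes of the slices of those polytopes — equivalently, the Duistermaat–Heckman-type density of the moment map for the $S^1$-action on $X(\Sigma)$ coming from the extra $S^1$ factor. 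With $A_p = -e^{i\pi/p}$ and the substitution $A_p \mapsto A_p^\sigma$, part (3) of Proposition \ref{limits} applies with $u = -1$ (note $(-1)^{2\beta} = 1$) and $\theta_k p_k \equiv \sigma/\beta$, yielding
$$\lim_{p\to\infty}\ev_{A_p^\sigma}\tr_p(L) = P_L(-1) + \int_\R e^{2i\pi x\sigma}\Bigl(\sum_{\alpha=0}^{2\beta-1}(-1)^\alpha f_\alpha(x)\Bigr)\,dx,$$
exactly the formula displayed just before the theorem.

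The second and main step is to show that the right-hand side above equals $\frac{1}{2\nu_g}\int_{X(\Sigma\times S^1)}\prod_i(-\tr\rho(L_i)^\sigma)\,d\nu(\rho)$. Here I would use the known identification, going back to Witten and made rigorous in the skein-theoretic setting, between the Verlinde numbers / dimensions of conformal blocks and Riemann–Roch numbers of line bundles on the moduli space $X(\Sigma)$, together with the fact that the leading-order asymptotics of these dimensions (as the level grows) recover the symplectic volume $\nu_g$. The link insertions $L_i$, being vertical circles over a multicurve, act on $V_p(\Sigma)$ by the curve operators of \cite{bhmv}, whose large-$p$ symbol is the trace function $-\tr\rho(\cdot)$ on $X(\Sigma)$; raising the root of unity to the power $\sigma$ replaces the holonomy $\rho(L_i)$ by $\rho(L_i)^\sigma$ because going around the vertical $S^1$ once at the twisted root of unity is the same as going around $\sigma$ times at the standard one — this is where the $(-\tr\rho(L_i)^\sigma)$ comes from and why the geometric meaning degrades for $\sigma>1$. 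Summing the symbol over the two components $\rho(t)=\pm 1$ of $X(\Sigma\times S^1)$ accounts for the factor of $2$ and the two copies of $X(\Sigma)$; the $e^{2i\pi x\sigma}$ weight in the Fourier integral is precisely the character of the $S^1$-action entering the Duistermaat–Heckman measure, so that $\int_\R e^{2i\pi x\sigma}(\sum_\alpha(-1)^\alpha f_\alpha(x))\,dx$ is, by stationary phase / exact Duistermaat–Heckman localization, the oscillatory integral $\int_{X(\Sigma)} e^{2i\pi\sigma\mu}\,(\text{symbol})\,d\nu$, which reassembles into the claimed character-variety integral after normalizing by $2\nu_g$.

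I expect the main obstacle to be the bookkeeping in this second step: matching the combinatorial slice-volumes $f_\alpha$ coming out of the integral-point count with the Duistermaat–Heckman density on $X(\Sigma)$, with all normalizations (the factor $2$, the $\nu_g$, the sign conventions $(-1)^\alpha$ versus $-\tr\rho$, and the precise role of $\sigma$) correct. The case $\sigma=1$ and $L\subset\Sigma\times[0,1]$ is already handled in \cite{mn}, so one strategy is to first reprove that case through the present cyclic-expansion machinery as a consistency check, and then argue that the general case follows by the same mechanism with $e^{2i\pi x}$ replaced by $e^{2i\pi x\sigma}$ throughout — the point being that nothing in the integral-point count changes, only the root of unity at which one evaluates, so the passage from $\sigma=1$ to general odd $\sigma$ is essentially formal once the $\sigma=1$ identification of $\sum_\alpha(-1)^\alpha f_\alpha$ with the Duistermaat–Heckman density is established. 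A secondary technical point is justifying the interchange of the $p\to\infty$ limit with the integral and controlling the $O(1/p)$ error uniformly, which follows from the compact support and piecewise-polynomiality of the $f_\alpha$ together with Proposition \ref{limits}.
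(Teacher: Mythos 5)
Your step 1 is fine and is exactly the paper's own motivational display: the cyclic expansion plus Proposition \ref{limits}(3) gives $\lim_p \ev_{A_p^\sigma}\tr_p(L)=P_L(-1)+\int e^{2i\pi x\sigma}\bigl(\sum_\alpha(-1)^\alpha f_\alpha(x)\bigr)dx$ (up to the harmless rescaling by $\beta$ in the phase). The genuine gap is that your step 2 --- identifying this quantity with $\frac{1}{2\nu_g}\int_{X(\Sigma\times S^1)}\prod_i(-\tr\rho(L_i)^\sigma)d\nu$ --- is precisely the content of the theorem, and you defer it as ``bookkeeping'' while invoking machinery (Riemann--Roch/Verlinde identifications, stationary phase, exact Duistermaat--Heckman localization) that neither is developed here nor is what makes the identification work. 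Since Theorem \ref{thm existence} only asserts the \emph{existence} of the $f_\alpha$, any identification of $\sum_\alpha(-1)^\alpha f_\alpha$ must go back into the lattice-point count of Section \ref{proof cyclic} for each weighted multicurve $[\gamma,w]$ in the expansion of $L$, keeping track of: (a) the covolumes of the slices $\Lambda\cap V_{m,n}$ versus $\hat\Lambda$, computed in the paper via mod-$2$ homology of $\Gamma$ and $\hat\Gamma$ and producing a factor $2^{l-1}$; (b) the parity constraints coming from the $\delta^p$-terms, which force the limit to vanish when $\sum_j b_j$ is odd and which exactly cancel the $2^{l-1}$ --- this is where the factor $\frac{1}{2}$ and the two components $\rho(t)=\pm1$ of $X(\Sigma\times S^1)$ enter; and (c) the Jeffrey--Weitsman statement that the pushforward of $\nu/\nu_g$ under the angle functions $\theta_j$ is the normalized Lebesgue measure on the slices of the polytope $P$ (no localization argument is needed: the limit is a plain Riemann sum). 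This is the paper's Theorem \ref{sigma}, which is proved by redoing the asymptotic state-sum analysis directly at $A_p^\sigma$, not by quoting the cyclic expansion as a black box; your proposal, fleshed out, would have to reproduce essentially all of it.

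Two of your heuristics also misplace where the work lies. First, the $\sigma$-power does not arise from ``going around the vertical $S^1$ once at the twisted root being the same as $\sigma$ times at the standard one'': in the computation the $\sigma$ enters through the eigenvalue $A^{2a_jn_j}$ of the curve operator $\langle a_j,b_j\rangle_T$, i.e.\ the phase $e^{2i\pi\sigma a_jn_j/p}$ attached to the winding $a_j$ along $\gamma_j\subset\Sigma$, which after the Riemann sum multiplies the holonomy angle $\theta_j$ by $\sigma$; the vertical winding $b_j$ only contributes the sign $\tau^{\sigma b_j}=\tau^{b_j}$ and the parity condition. Second, the reduction ``the $\sigma=1$ case is \cite{mn}, and the general case follows by replacing $e^{2i\pi x}$ with $e^{2i\pi x\sigma}$'' does not cover the new content of the theorem: \cite{mn} treats $L\subset\Sigma\times[0,1]$, i.e.\ zero vertical winding, whereas Theorem \ref{witten-sigma} allows components winding around the circle factor ($b_j\neq0$), and for these the integral-point count \emph{does} change (the $\delta^p_{n_j+\epsilon_jb_j-\zeta_jm_j}$ analysis, the non-vanishing criterion $l_i(m,n)=0$, and the resulting vanishing/factor-of-$2$ phenomena). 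So the passage is formal in the $\sigma$-direction but not in the $L$-direction, and the latter is where the proof of Theorem \ref{sigma} does its real work.
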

This formula extends to all links using the Kauffman relation, however we do not know a direct expression of $\lim\limits_{p\to \infty}\ev_{A_p^\sigma}\tr_p(L)$ for general banded links. The fact the $\sigma$ exponent moved from $A_p$ to $\rho(L_i)$ is a striking phenomenon which deserve further study. The theorem will be a rather direct consequence of Theorem \ref{sigma} proved in Section \ref{geom}.

%\subsubsection{Application to the AMU conjecture}

%It has $3$ self intersection points which are depicted by dots. It can be seen that the loop is filling which is to say that the complementary of this loop is a topological disk.
%\begin{proposition} \label{example}
%The polynomial $P_{\gamma,3}$ associated to the previous loop equals $t^{12}+2t^8+13t^4+43+13t^{-4}+2t^{-8}+t^{-12}$. In particular Theorem \ref{thm AMU} applies.

%\end{proposition}

\section{Skein computations}\label{skein}
\subsection{Computing the polynomial $P_L$}
The computation of the polynomial $P_L$ associated to a banded link $L \subset \Sigma \times S^1$ is better understood in terms of skein modules. For any compact oriented manifold $M$ (maybe with boundary), we denote by $\mathcal{K}(M)$ the Kauffman bracket skein module with coefficients in $\Z[A^{\pm 1}]$. We recall that it is the free $\Z[A^{\pm1}]$-module generated by the set of isotopy classes of banded links in the interior of $M$ quotiented by the following relations.  First
 $$L_{\times} = A  L_{\infty} + A^{-1}  L_0$$ where $L_{\times}$, $L_0$, $ L_{\infty}$ are any three banded links in $M$ which are the same outside a small $3$-ball but differ inside as in Figure \ref{Ktriple}. In this case, the triple $L_{\times}$, $L_0$, $ L_{\infty}$ is called a Kauffman triple. The second relation satisfied in $\mathcal{K}(M)$ is $L \cup D=-(A^2+A^{-2}) \, L$ where $L$ is any link in $M$ and $D$ is a trivial banded knot.

\begin{figure}[htbp]
\includegraphics[width=3.2cm]{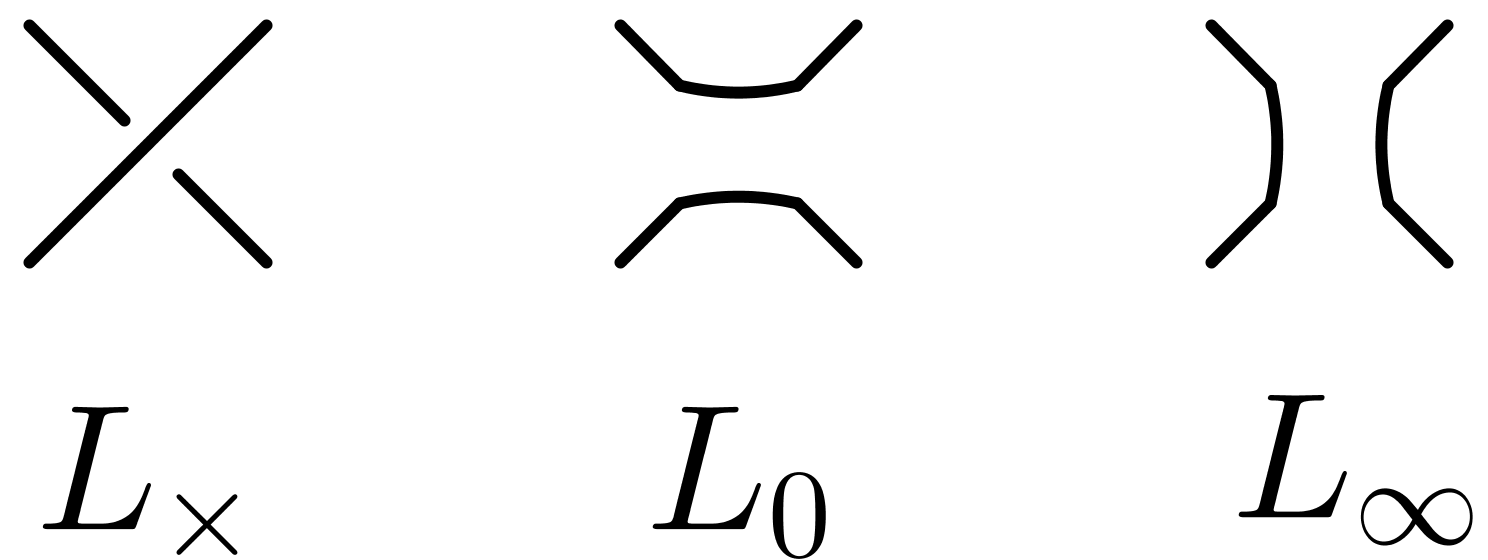}
\caption{Kauffman triple}\label{Ktriple}
\end{figure}
\begin{proposition} \label{skein_cyclo}
Let $L_{\times}$, $L_0$, $ L_{\infty}$ be a Kauffman triple in $\Sigma \times S^1$. If the sequences $\tr_p(L_0)$ and $\tr_p(L_{\infty})$ have a cyclic expansion then the sequence $\tr_p(L_{\infty})$ has a cyclic expansion. Moreover $$P_{L_{\times}}= A P_{L_ \infty} + A^{-1} P_{L_0}$$

\end{proposition}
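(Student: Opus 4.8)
The plan is to reduce everything to the linearity of $\tr_p$. By construction of the skein-theoretic TQFT of \cite{bhmv}, the invariant $Z_p(\Sigma\times S^1,-)$, and hence $\tr_p(-)=Z_p(\Sigma\times S^1,-)/\dim V_p(\Sigma)$, is the specialization at $A=A_p$ of a $\Z[A^{\pm1}]$-linear map $\mathcal K(\Sigma\times S^1)\to K_p$; since the skein relation $L_\times=AL_\infty+A^{-1}L_0$ holds in $\mathcal K(\Sigma\times S^1)$, this gives the identity in $K_p$
$$\tr_p(L_\times)=A\,\tr_p(L_\infty)+A^{-1}\,\tr_p(L_0).$$
So the whole proof consists in substituting the given cyclic expansions of $\tr_p(L_\infty)$ and $\tr_p(L_0)$ on the right and rearranging the result into the form \eqref{cyclic}, checking that every rearrangement costs only $O(1/p)$ in $\|\cdot\|_p$. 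The polynomial part will come out as $AP_{L_\infty}+A^{-1}P_{L_0}$, which is the asserted formula (the statement evidently means that $\tr_p(L_\times)$ has a cyclic expansion).

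I would first isolate two elementary facts about $(K_p\otimes\R,\|\cdot\|_p)$. The first is that multiplication by a unit $\pm A^m$ is bounded with operator norm $\le 2^{|m|}$ independent of $p$: an easy computation using $A^{2p}=1$ (and $A^p=-1$) in $K_p$ shows $\|A^{\pm1}x\|_p\le2\|x\|_p$, so multiplying a cyclic expansion by $A^{\pm1}$ leaves the remainder of order $1/p$. The second: writing $S_k[f]=\tfrac1p\sum_{n\in\Z}A^{kn}f(n/p)$ for $f\in\mathcal C$ and $k\in\Z$, one has $\|S_k[g]\|_p=O(1/p)$ uniformly over any family of $g\in\mathcal C$ with uniformly bounded sup-norm, support and number of polynomial pieces: after reduction mod $2p$ the exponents $kn$ occupy at most $p$ residues, each fed by values of $n$ in a single arithmetic progression of step $\ge p/\gcd(k,2p)$, so for fixed $k$ and fixed support only boundedly many terms $g(n/p)$ collide into one coefficient — exactly the counting already used in the proof of Proposition~\ref{limits}. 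Applying this to $g=f(\cdot+c/p)-f$ (Taylor expansion away from the finitely many breakpoints) yields $\|S_k[f(\cdot+c/p)]-S_k[f]\|_p=O(1/p)$ for fixed $c$, and reindexing $n\mapsto n\mp1$ then gives $A^{\pm k}S_k[f]=S_k\bigl[f(\cdot\mp1/p)\bigr]=S_k[f]+O(1/p)$.

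Next I would put the two expansions over a common period. Abbreviating a cyclic expansion as $\tr_p(L)=P_L(A)+\sum_{\alpha=0}^{2\beta-1}A^\alpha S_{2\beta}[f_\alpha]+O(1/p)$, for $m\ge1$ I split $n=mn'+j$ with $0\le j<m$ to get $S_{2\beta'}[g]=\sum_{j=0}^{m-1}A^{2\beta'j}\,S_{2m\beta'}[h_j]$ with $h_j(x)=g(mx+j/p)$, which by the second fact above equals $\sum_{j=0}^{m-1}A^{2\beta'j}\,S_{2m\beta'}[g(m\,\cdot)]+O(1/p)$, the function $x\mapsto g(mx)$ again lying in $\mathcal C$. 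As $(a,j)\mapsto a+2\beta'j$ is a bijection $\{0,\dots,2\beta'-1\}\times\{0,\dots,m-1\}\to\{0,\dots,2m\beta'-1\}$, this converts a cyclic expansion of period $2\beta'$ into one of period $2m\beta'$. Taking $m=\beta_0$ for $L_\infty$ and $m=\beta_\infty$ for $L_0$ rewrites both expansions with the common period $2\beta$, $\beta=\beta_\infty\beta_0$, say $\tr_p(L_\infty)=P_{L_\infty}(A)+\sum_\alpha A^\alpha S_{2\beta}[f^\infty_\alpha]+O(1/p)$ and likewise with functions $f^0_\alpha$ for $L_0$.

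Finally I would assemble: multiplying these by $A$ and by $A^{-1}$, using the first fact to keep the remainders $O(1/p)$ and the relations $A^{\pm2\beta}S_{2\beta}[f]=S_{2\beta}[f]+O(1/p)$ to fold back the single term whose exponent fell outside $\{0,\dots,2\beta-1\}$, I obtain
$$\tr_p(L_\times)=\bigl(AP_{L_\infty}(A)+A^{-1}P_{L_0}(A)\bigr)+\sum_{\alpha=0}^{2\beta-1}A^\alpha S_{2\beta}\bigl[f^\infty_{\alpha-1\bmod 2\beta}+f^0_{\alpha+1\bmod 2\beta}\bigr]+O\bigl(\tfrac1p\bigr).$$
This is a cyclic expansion of $\tr_p(L_\times)$ with polynomial part $P_{L_\times}=AP_{L_\infty}+A^{-1}P_{L_0}\in\Z[A^{\pm1}]$, period $2\beta$, and functions $f_\alpha:=f^\infty_{\alpha-1\bmod2\beta}+f^0_{\alpha+1\bmod2\beta}\in\mathcal C$, proving both assertions. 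I expect the only real work to be the norm bookkeeping of the middle two paragraphs — certifying that replacing the genuinely $p$-dependent functions $h_j$ by $g(m\,\cdot)$, and the index shifts by $\pm1$, each cost only $O(1/p)$ — but every such estimate is of the Taylor-plus-lattice-point-counting type already carried out in the proof of Proposition~\ref{limits}.
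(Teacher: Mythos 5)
Your argument is correct and is essentially the paper's own proof: both start from the skein relation $\tr_p(L_\times)=A\,\tr_p(L_\infty)+A^{-1}\tr_p(L_0)$ and then check that cyclic expansions are stable under multiplication by $A^{\pm1}$ (via the observation $(1-A^{2\beta})H_p(\beta,f)=O(1/p)$, i.e.\ your folding relation) and under finite sums, by passing to a common period through Euclidean division of the index and a Taylor estimate $f_\alpha\bigl(\tfrac{\delta m+\alpha'}{p}\bigr)=f_\alpha\bigl(\tfrac{\delta m}{p}\bigr)+O(1/p)$. Your extra bookkeeping on the norm $\|\cdot\|_p$ (boundedness of multiplication by units and the residue-collision count) only makes explicit estimates the paper leaves implicit, so no further comparison is needed.
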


\begin{proof}
Remark that the Witten-Reshetikhin-Turaev invariants satisfy the skein relation : $\tr_p(L_{\times}) = A \tr_p(L_{\infty})+ A^{-1} \tr_p(L_0)$. Therefore, it is enough to prove that cyclic expansions well behave under multiplication by $A^{ \pm 1}$ and finite sum. For $\beta \ge 0$ an integer, $f=(f_0,...,f_{2\beta-1}) \in \mathcal{C}^{2 \beta}$ we define $$ H_p(\beta,f) = \frac{1}{p} \sum_{\alpha =0}^{2 \beta-1} \sum_{n \in \Z} A^{\alpha+ 2n \beta}f_{\alpha}\Big( \frac{n}{p} \Big)$$

Let $\beta > 0$ be an integer and let $f=(f_0,\ldots, f_{2\beta-1}) \in \mathcal{C}^{2 \beta}$. Applying a Taylor expansion to the functions $f_0,\ldots,f_{2\beta-1}$ we have the following $(1-A^{2 \beta})H_p(\beta,f) = O (\frac{1}{p})$. Hence the  sequence $A^{\pm 1} H_p(\beta,f)$ admits a cyclic expansion. This says that if a sequence admits a cyclic expansion then the same sequence multiplied by $A^{\pm 1}$ also admits a cyclic expansion.

Let $\beta' > 0$ be an integer divisible by $\beta$ and set $\delta = \beta'/ \beta$. One can make the Euclidean division $n=\delta m+\alpha'$ and write
%\begin{align}
$$H_p(\beta,f) = \frac{1}{p} \sum_{\alpha =0}^{2 \beta-1} \sum_{\alpha' = 0}^{\delta-1} \sum_{m \in \Z} A^{\alpha+ 2\beta\alpha'+2 m \beta'}f_{\alpha}\Big( \frac{\delta  m + \alpha' }{p} \Big) $$%\notag \\
%& = \frac{1}{p} \sum_{\alpha =0}^{2 \beta'-1} \sum_{m \in \Z} A^{\alpha_{\mid_{2\beta}}+ 2 m \beta'}f_{\alpha_{\mid_{2 \beta}}}\Big( \frac{\delta m }{p} \Big)+ O\Big(\frac{1}{p} \Big) \notag
%\end{align}
%where $\alpha_{\mid_{2 \beta}}$ is the remainder of the Euclidean division of $\alpha$ by $2 \beta$. 
By the previous argument and the estimation $f_{\alpha}( \frac{\delta  m + \alpha' }{p} )=f_\alpha(\frac{\delta m}{p})+O(\frac{1}{p})$, we find that there exists $g \in \mathcal{C}^{2 \beta'}$ such that $H_p(f,\beta) = H_p(g,\beta')+O(\frac{1}{p})$. From this we can deduce that the sum of two sequences admitting cyclic expansions also admits a cyclic expansion.
\end{proof}
This proposition means that the polynomial $P_L$ extends to a $\Z[A^{\pm 1}]$-linear map $\eta: \mathcal{K}(\Sigma\times S^1)\to \Z[A^{\pm 1}]$ defined by $\eta(L)=P_L$. If $L$ sits inside a ball $B\subset \Sigma\times S^1$, the polynomial $P_L$ is just the Kauffman bracket of $L$. Hence the map $\eta$ defines a section of the natural inclusion map $\mathcal{K}(B)\to \mathcal{K}(\Sigma\times S^1)$. We will construct $\eta$ by giving its value on a $\Z[A^{\pm 1}]$-span of $\mathcal{K}(\Sigma \times S^1)$. That this map is well-defined is a non-trivial consequence of the existence of TQFT invariants and properties of the cyclic expansions.

Before that, we need to recall the results concerning the multiplicative structure of the skein module of the torus times an interval.
\subsubsection{Review of the skein module of the torus times an interval} \label{frogel0}
 We denote by $T$ the torus $S^1 \times S^1$.
It was proven by Frohman-Gelca and Sallenave (see \cite{fro-gel,sallenave}) that the skein module of the torus $T$ is isomorphic to the symmetric part of the quantum torus. Formally, we define the quantum torus as the non-commutative $\Z[A^{\pm 1}]$-algebra $\mathcal{T}=\Z[A^{\pm 1}]\langle M^{\pm 1},L^{\pm 1}\rangle /(LM-A^2ML)$. Let $\sigma$ be the involution of $\mathcal{T}$ defined by $\sigma(M^mL^l)=M^{-m}L^{-l}$. 

\begin{proposition}
For any $(m,l)\in \Z^2$ we set 
$$(m,l)_T=(-1)^{l+m}A^{ml}(M^mL^l+M^{-m}L^{-l})\in \mathcal{T}^{\sigma}.$$
There is an isomorphism of algebras $\Upsilon:\mathcal{K}(T\times [0,1])\overset{\sim}{\to} \mathcal{T}^{\sigma}$
which maps the standard banded curve in $T\times[0,1]$ with slope $(m,l)$ to the element $(m,l)_T$ when $\gcd(m,l)=1$. 

We have for any $a,b,c,d\in \Z$ the following product-to-sum formula:
$$(a,b)_T(c,d)_T=A^{ad-bc}(a+c,b+d)_T+A^{-ad+bd}(a-c,b-d)_T.$$
\end{proposition}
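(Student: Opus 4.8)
The statement to prove is the Frohman--Gelca--Sallenave description of $\mathcal{K}(T\times[0,1])$ together with the product-to-sum formula. Here is how I would organize a proof.

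\medskip

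\textbf{Step 1: The skein algebra structure and a spanning set.} First I would recall that $\mathcal{K}(T\times[0,1])$ is an algebra under vertical stacking, and observe that any banded link in $T\times[0,1]$ can be isotoped into a collection of parallel banded curves, each of some primitive slope $(m,l)$ with a multiplicity; using the Kauffman relation to resolve crossings, one shows that the monomials in the primitive-slope curves span the module over $\Z[A^{\pm1}]$. In fact it is standard (Bullock--Przytycki) that $\mathcal{K}(T\times[0,1])$ is a free module with basis indexed by $\N\times(\text{nonnegative slopes})$; I would cite this rather than reprove it. The key point is that the curve of slope $(m,l)$ and the curve of slope $(-m,-l)$ are isotopic in $T\times[0,1]$ (orientation of a banded curve is irrelevant), which already forces the image to land in a "symmetric" object.

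\textbf{Step 2: Construct the map $\Upsilon$ and check it is well-defined.} I would define $\Upsilon$ on the free basis by sending the $k$-fold parallel copy of the slope-$(m,l)$ curve to the appropriately normalized $k$-th "Chebyshev-type" combination, but the cleanest route is to invoke the already-quoted result that $\mathcal{K}(T)\cong \mathcal{T}^\sigma$ (Frohman--Gelca for the quantum-torus side, Sallenave for $A$ a root of unity / general coefficients) and simply record the normalization: set $(m,l)_T=(-1)^{l+m}A^{ml}(M^mL^l+M^{-m}L^{-l})$, which manifestly lies in $\mathcal{T}^\sigma$ since $\sigma(M^mL^l+M^{-m}L^{-l})=M^{-m}L^{-l}+M^mL^l$. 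That $\Upsilon$ is a well-defined algebra isomorphism is exactly the cited theorem; the only thing requiring care is that the sign $(-1)^{l+m}$ and the framing factor $A^{ml}$ are the correct choices to make $\Upsilon$ multiplicative and to send the standard unframed-looking curve of coprime slope to $(m,l)_T$. I would verify this normalization is consistent by checking it on the generators $M=(1,0)_T$ up to sign and $L=(0,1)_T$ up to sign and on one nontrivial product.

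\textbf{Step 3: The product-to-sum formula.} This is a direct computation in $\mathcal{T}$. Using $LM=A^2ML$, one gets $M^aL^b\cdot M^cL^d=A^{-2bc}M^{a+c}L^{b+d}$ and similarly for the other three cross terms. Expanding
$$(a,b)_T(c,d)_T=(-1)^{a+b+c+d}A^{ab+cd}\big(M^aL^b+M^{-a}L^{-b}\big)\big(M^cL^d+M^{-c}L^{-d}\big),$$
I would collect the four resulting monomials into the two $\sigma$-orbits $\{M^{a+c}L^{b+d},M^{-a-c}L^{-b-d}\}$ and $\{M^{a-c}L^{b-d},M^{-a+c}L^{-b+d}\}$, pull out the normalizing factors $(-1)^{a+c+b+d}A^{(a+c)(b+d)}$ and $(-1)^{a-c+b-d}A^{(a-c)(b-d)}$ respectively, and read off the remaining powers of $A$. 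A short bookkeeping with the exponents $ab+cd-2bc$, $(a+c)(b+d)$, $(a-c)(b-d)$ gives the exponents $ad-bc$ and $-ad+bc$ (note $(-1)^{a-c+b-d}=(-1)^{a+b+c+d}$ since $2(c+d)$ is even, so the signs match). This yields exactly $(a,b)_T(c,d)_T=A^{ad-bc}(a+c,b+d)_T+A^{-(ad-bc)}(a-c,b-d)_T$; I note the formula as printed has a typo ($bd$ should read $bc$ in the second exponent), and the computation above produces the symmetric-looking correct version.

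\medskip

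\textbf{Main obstacle.} The genuinely nontrivial input is Step 2 --- that $\mathcal{K}(T\times[0,1])$ is isomorphic as an algebra to $\mathcal{T}^\sigma$, i.e.\ that the multiplication of parallel torus curves satisfies a product-to-sum rule with \emph{no higher-order correction terms}. Since this is precisely the cited theorem of Frohman--Gelca and Sallenave, I would not reprove it; the only real work left for us is pinning down the sign and framing normalization $(-1)^{l+m}A^{ml}$ so that the stated product-to-sum formula holds on the nose, and then the purely algebraic verification of Step 3.
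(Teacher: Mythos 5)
Your overall strategy matches the paper's treatment: the paper gives no proof of this proposition at all --- the isomorphism $\mathcal{K}(T\times[0,1])\cong\mathcal{T}^\sigma$ is simply quoted from Frohman--Gelca and Sallenave --- so invoking that theorem in your Step 2 is exactly what the authors do, and your Step 3, deriving the product-to-sum formula by direct computation in $\mathcal{T}$ from the definition of $(m,l)_T$, is a legitimate (and welcome) supplement, since that identity is purely algebraic once the normalization is fixed.

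However, Step 3 contains a sign slip in the commutation rule, and it matters for precisely the bookkeeping you set out to do. With the paper's relation $LM=A^2ML$ one has $L^bM^c=A^{2bc}M^cL^b$, hence
$$M^aL^b\cdot M^cL^d=A^{+2bc}\,M^{a+c}L^{b+d},$$
not $A^{-2bc}$ as you wrote. Carrying the correct sign through your own computation gives the exponent $ab+cd+2bc-(a+c)(b+d)=bc-ad$ on the $(a+c,b+d)_T$ term and $ad-bc$ on the $(a-c,b-d)_T$ term, i.e.
$$(a,b)_T(c,d)_T=A^{bc-ad}(a+c,b+d)_T+A^{ad-bc}(a-c,b-d)_T,$$
which can be sanity-checked on $(1,0)_T(0,1)_T=(M+M^{-1})(L+L^{-1})=A^{-1}(1,1)_T+A\,(1,-1)_T$. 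So you are right that the printed exponent $-ad+bd$ must be $\pm(ad-bc)$, but the ``symmetric-looking correct version'' you land on (with $A^{ad-bc}$ on the sum term) only appears because your wrong commutation sign compensates: under the conventions as literally stated in the paper ($LM=A^2ML$ together with the given $(m,l)_T$), the two exponents come out the other way around (equivalently, the printed formula is the one valid for the relation $ML=A^2LM$, or after substituting $A\mapsto A^{-1}$). Since the entire content of your Steps 2--3 is to pin down signs and framing normalizations, this is the one place where the computation must be redone with the conventions fixed consistently; apart from that, the proposal is sound and at the same level of rigor as the paper, which relies on the cited references for the substantial input.
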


For technical reasons, we will also need the following normalization $\langle l,m\rangle_T =M^mL^l+M^{-m}L^{-l}$.
\begin{remark} \label{remfrogel}
This proposition implies that $\mathcal{K}(T\times [0,1])$ is generated as a $\Z[A^{\pm1}]$-module by $\{ \langle m,l \rangle_T  \, | \, (m,l) \in \Z^2 \}\cup \{ \emptyset \}$.
\end{remark}

\subsubsection{Weighted multicurves} We introduce weighted multicurves as a set of generators of $\mathcal{K}(\Sigma\times S^1)$ over $\Z[A^{\pm 1}]$.  %Our $\Z[A^{\pm 1}]$-span will be what we call weighted multicurves. 
\begin{definition}
Let $k \geq 0$, a $k$-multicurve is a collection of $k$ disjoint essential simple oriented and non pairwise parallel closed curves.
\end{definition}
\begin{definition}
Let $\gamma = \gamma_1 \cup \cdots \cup \gamma_k$ be a $k$-multicurve. A weight on $\gamma$ is an element $w=(a_1,b_1,\ldots,a_k,b_k)\in  \Z^{2k}$ thought as an assignment of a pair $(a_i,b_i)$ for each connected component $\gamma_i$ of $\gamma$. A pair $(\gamma,w)$ will be called a weighted multicurve.
\end{definition}

Let $\gamma = \gamma_1 \cup \cdots \cup \gamma_k$ be a $k$-multicurve on $\Sigma$ with weight $w$. 
For $1 \leq j \leq k$, we choose a diffeomorphism between $S^1$ and $\gamma_j$ respecting the orientation. Denote by $T$ the torus $S^1\times S^1$ :  we can embed $T$ in $\Sigma\times S^1$ by sending the first factor to $\gamma_j$. This embedding extends to an embedding $\Phi_j:T\times [0,1]\to \Sigma\times S^1$ respecting the orientation of $\Sigma\times S^1$. 

\begin{definition}
The skein associated to the weighted multicurve $(\gamma,w)$ is the element $[\gamma,w]=\bigcup_{j=1}^k \Phi_j(\langle a_j,b_j\rangle_T)\in \mathcal{K}(\Sigma\times S^1)$ where $w=(a_1,b_1,\ldots,a_k,b_k)$ and $\langle a,b\rangle_T$ is the skein element defined in Subsection \ref{frogel0}.
\end{definition}

\begin{proposition} \label{multi_span}
The set of weighted multicurves $[\gamma,w]$ spans the $\Z[A^{\pm 1}]$-module $\mathcal{K}(\Sigma\times S^1)$.
\end{proposition}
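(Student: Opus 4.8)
The plan is to show that an arbitrary banded link $L\subset \Sigma\times S^1$ can be expressed, in $\mathcal K(\Sigma\times S^1)$, as a $\Z[A^{\pm1}]$-linear combination of elements $[\gamma,w]$. First I would isotope $L$ into a convenient position: since $\Sigma\times S^1$ fibers over $S^1$, put $L$ in general position with respect to the projection $\Sigma\times S^1\to S^1$, so that $L$ is a union of vertical arcs (pieces where the $S^1$-coordinate is monotone) and finitely many critical points. Alternatively — and this is the cleaner route — pick a handle decomposition / cut system: choose a finite collection of disjoint vertical tori $T_i=c_i\times S^1$, where $c_1,\ldots,c_m$ are disjoint simple closed curves on $\Sigma$ cutting $\Sigma$ into planar pieces (pairs of pants, or a single $(4g+\cdots)$-gon), isotope $L$ to be transverse to all the $T_i$, and observe that $L$ decomposes as a ``sum'' of tangles living in the pieces $P_j\times S^1$ where $P_j\subset\Sigma$ is a planar surface.

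The key steps in order: (1) Reduce to links lying in a neighborhood of the ``skeleton'' $\bigl(\bigcup_i c_i\bigr)\times S^1 \;\cup\; (\text{arcs})$. Concretely, use the Kauffman relation to resolve all crossings of the diagram of $L$ projected to $\Sigma$; this writes $L$ as a combination of banded links whose projection to $\Sigma$ is an embedded $1$-manifold, i.e.\ a multicurve possibly together with some trivial circles bounding discs in $\Sigma$. (2) Kill the trivial circles using the relation $L\cup D = -(A^2+A^{-2})L$, at the cost of a unit factor; and absorb inessential or parallel components. After this we are left with links $L'$ whose projection to $\Sigma$ is an honest multicurve $\gamma=\gamma_1\cup\cdots\cup\gamma_k$ (essential, disjoint, pairwise non-parallel), but $L'$ may wind many times and have complicated banding inside each solid-torus-cross-interval neighborhood $N(\gamma_j)\cong T\times[0,1]$. (3) In each such neighborhood, invoke the Frohman--Gelca--Sallenave description recalled in Subsection~\ref{frogel0}: $\mathcal K(T\times[0,1])$ is spanned over $\Z[A^{\pm1}]$ by $\{\langle m,l\rangle_T\}\cup\{\emptyset\}$ (Remark~\ref{remfrogel}), so the restriction of $L'$ to $N(\gamma_j)$ is a $\Z[A^{\pm1}]$-combination of the basic skeins $\langle a_j,b_j\rangle_T$. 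Multiplying these expansions over the disjoint neighborhoods $j=1,\ldots,k$ (the neighborhoods can be taken disjoint since the $\gamma_j$ are disjoint, and skein elements supported in disjoint submanifolds multiply without interaction), we get exactly a $\Z[A^{\pm1}]$-combination of the skeins $[\gamma,w]=\bigcup_j\Phi_j(\langle a_j,b_j\rangle_T)$. Hence every banded link is in the span, and since banded links generate $\mathcal K(\Sigma\times S^1)$ by definition, the $[\gamma,w]$ span.

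I expect the main obstacle to be step~(1)--(2): making rigorous the claim that, after resolving crossings, one may assume the underlying $1$-manifold in $\Sigma$ is a genuine multicurve in the sense of the definition (essential components, no two parallel, with the understanding that parallel copies in $\Sigma$ get pushed off to different $S^1$-levels and thus land in the \emph{same} torus neighborhood where they are handled by the torus skein module). One must be careful that "resolving crossings" is done with respect to a diagram on $\Sigma$ and that the resulting embedded curves can be isotoped in $\Sigma\times S^1$ to lie in the chosen product neighborhoods; this uses that a simple closed curve on $\Sigma$ together with its framing determines a standard solid torus, and that any banded link in a solid torus $\times[0,1]\subset\Sigma\times S^1$ whose core projects to that curve can be isotoped into $N(\gamma_j)=T\times[0,1]$. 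The verification that disjointly-supported skeins multiply coordinatewise and that the product of spanning sets is a spanning set for the target is then routine. No new analytic input (cyclic expansions, Proposition~\ref{limits}) is needed here — this proposition is purely about the algebraic structure of $\mathcal K(\Sigma\times S^1)$, and the work is all in controlled general position arguments plus citing Frohman--Gelca--Sallenave.
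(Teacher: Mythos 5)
Your proposal follows essentially the same route as the paper: isotope $L$ so that it is vertical (a trivial tangle $\{p_1,\ldots,p_n\}\times J^*$) outside a product region $\Sigma\times J$ --- the paper's $J/J^*$ trick is exactly the rigorous form of your ``general position with respect to the $S^1$-projection'' step, and is what makes over/under crossings on $\Sigma$ well defined --- then resolve crossings by the Kauffman relations and express everything supported in the annular neighborhoods $\tilde{\gamma}_j\times S^1\cong T\times[0,1]$ via Remark \ref{remfrogel}. The only point you gloss over, the leftover vertical strands $\{p_i\}\times S^1$ which project to points rather than curves and must be absorbed as $(0,1)$-type skeins in suitable torus pieces, is treated the same way (implicitly) in the paper, so there is no genuine difference of method.
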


\begin{proof}
Let $L \subset \Sigma \times S^1$ be a banded link. Set $J = \{ e^{i \pi t} \, | \, t \in [0,1] \}$ and $J^*=S^1 \setminus J$. There is a finite set $\{p_1,... p_n \}$ of banded points in $\Sigma$ (perhaps empty) so that one has up to isotopy
%\begin{enumerate}
% \item $L$ intersects $\Sigma \times \{ 1 \}$ and $ \Sigma \times \{ i \}$ transversally respectively at $p_1 \times \{1 \} ,...,p_n \times \{1 \} $ and $p_1 \times \{ -1 \} ,...,p_n \times \{-1 \} $
 %\item 
 $L \cap (\Sigma \times J^*) = \{p_1,...,p_n \} \times J^*$.
 %\end{enumerate}
 Let $L'$ be the intersection of $L$ with $\Sigma \times J$. We have the following picture :
%  $$L =  \begin{minipage}[c]{4cm}
%\includegraphics[scale = 0.16]{diagram}
%\end{minipage}$$

\begin{figure}[htbp]
\centering
  \def\svgwidth{6cm}
 \executeiffilenewer{multi.svg}{multi.pdf}%
 {inkscape -z -D --file=multi.svg %
 --export-pdf=multi.pdf --export-latex}%
 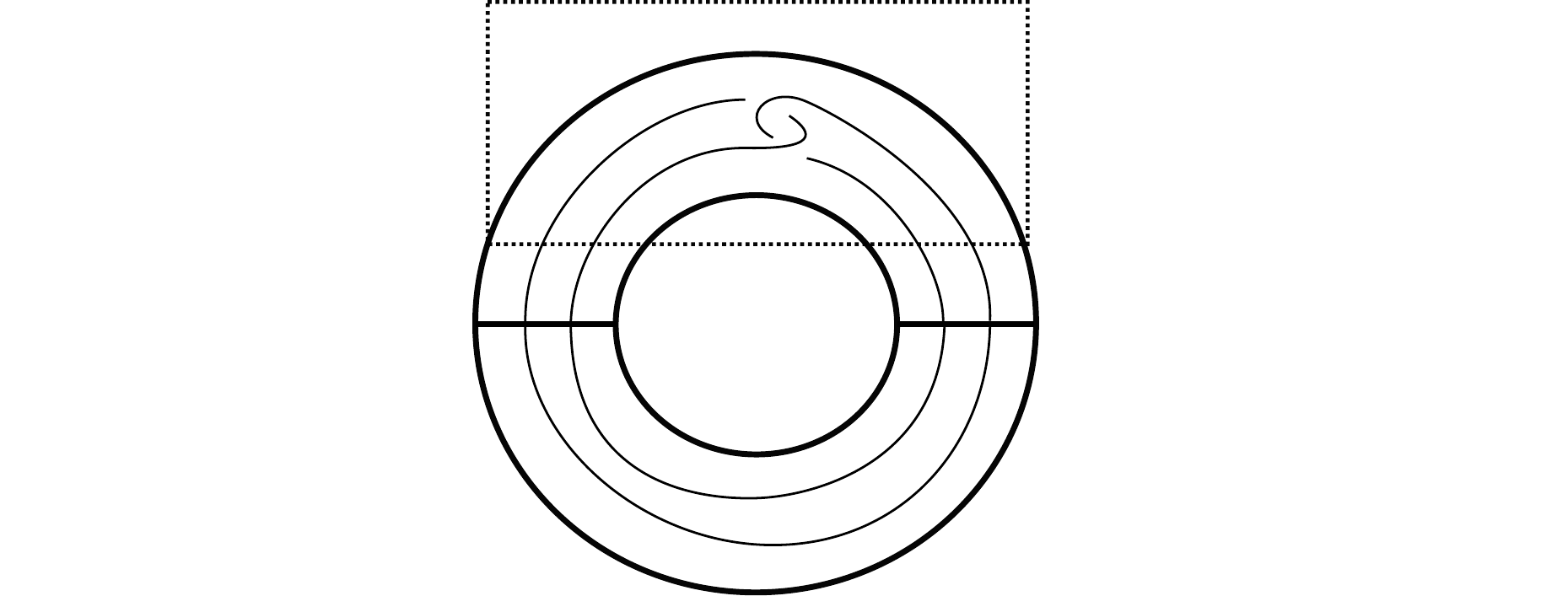%

  \caption{A banded link in $\Sigma\times S^1$}
  \label{idempotent} 
\end{figure}

Applying the skein relations we can reduce to the case where the projection of $L'$ on $\Sigma$ is finite disjoint union of banded simple closed curves and banded points. Hence we can find an integer $k \geq 0$ and a $k$-multicurve $ \gamma = \gamma_1 \cup ... \cup \gamma_k$ in $\Sigma$ such that, up to isotopy, $L \subset \tilde{\gamma} \times S^1$ where $\tilde{\gamma} \subset \Sigma$ is a tubular neighborhood of $\gamma$. This says that $L$ is in the image of the canonical map $\mathcal{K}(T \times [0,1])^{\otimes k}\to \mathcal{K}(\Sigma \times S^1)$ induced by the inclusion $\tilde{\gamma} \times S^1 \hookrightarrow \Sigma \times S^1$. Finally from Remark \ref{remfrogel} we conclude that $L$ is a $\Z[A^{\pm1}]$-linear combination of weighted multicurves where the underlying multicurve is $\gamma$.

\end{proof}
Theorem \ref{thm1} will follow from the following one which will be proved in Section \ref{proof cyclic}.
\begin{theorem}\label{existence}
For any weighted curve $[\gamma,w]$ the sequence $\tr_p\left([\gamma,w]\right)$ has a cyclic expansion. Let $k$ be the number of connected components of $\gamma$. One has $\eta([\gamma,w])=2^k$ if $w= 0$ and $0$ otherwise.
\end{theorem}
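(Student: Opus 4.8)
The plan is to compute $\tr_p([\gamma,w])$ explicitly by reducing to a product of torus contributions and then extract a cyclic expansion from the resulting exponential sums. First I would use the fact, recalled in the proof of Proposition \ref{multi_span}, that $[\gamma,w]$ lives in the image of $\mathcal{K}(T\times[0,1])^{\otimes k}\to \mathcal{K}(\Sigma\times S^1)$ coming from a tubular neighbourhood $\tilde\gamma\times S^1$ of a $k$-multicurve $\gamma=\gamma_1\cup\cdots\cup\gamma_k$. The TQFT invariant $\tr_p$ of such an element can be evaluated using the standard ``cutting along curves'' technology: $V_p(\Sigma)$ decomposes along the pants decomposition refining $\gamma$, and the operator attached to each solid-torus factor $\Phi_j(\langle a_j,b_j\rangle_T)$ acts diagonally (via Frohman–Gelca) in the basis indexed by admissible colorings of $\gamma$. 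Concretely, $\tr_p([\gamma,w])$ becomes a sum over admissible colorings $c=(c_1,\ldots,c_k)$ of the curves $\gamma_j$ of a product $\prod_j \lambda_p(a_j,b_j;c_j)$ of eigenvalues, weighted by a multiplicity $N_p(c)$ counting the number of ways $c$ extends to a full admissible coloring of a pants decomposition — that multiplicity is a quasi-polynomial in $p$ and in $c$, which is exactly the ``counting integral points in polytopes'' alluded to after the statement of Theorem \ref{thm existence}.

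Next I would make the eigenvalues explicit. The element $\langle a,b\rangle_T = M^aL^b+M^{-a}L^{-b}$ acts on the color-$c$ line of the torus by (up to a sign and a power of $A$) $A^{2(ac' )}+A^{-2(ac')}$ type expressions, more precisely by values of the form $A^{\pm 2 a c} \cdot(\text{unitary factor})$ after incorporating the framing correction $(-1)^{a+b}A^{ab}$ hidden in the normalization $(m,l)_T$ versus $\langle m,l\rangle_T$. Expanding the product $\prod_j(A^{2a_jc_j}+A^{-2a_jc_j})$ over the $2^k$ sign choices and collecting, $\tr_p([\gamma,w])$ is a $\Z[A^{\pm1}]$-linear combination, with coefficients that are the quasi-polynomial multiplicities $N_p(c)$ summed over lattice points $c$ in a dilated polytope $p\cdot\Delta$, of pure monomials $A^{2\langle \epsilon\cdot a, c\rangle}$ where $a=(a_1,\ldots,a_k)$ and $\epsilon\in\{\pm1\}^k$. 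Applying Euler–Maclaurin / the theory of Ehrhart quasi-polynomials to the sum $\frac1p\sum_{c\in p\Delta\cap\Z^k} N_p(c)\,A^{2\langle\epsilon a,c\rangle}$, one gets a leading term of the shape $\frac1p\sum_{n}A^{2\beta n}f_\alpha(n/p)$: after a change of variables sending the linear form $\langle \epsilon a,c\rangle$ to a single coordinate $n$ (with $\beta$ chosen as an appropriate multiple of the $\gcd$'s of the entries of $a$), the remaining sum over the transverse lattice directions produces, in the limit, an integral of a piecewise-polynomial compactly-supported function $f_\alpha\in\mathcal{C}$ (a volume of a slice of $\Delta$), which is the cyclic-expansion data. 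The constant (order $p^0$) part of the whole expression is the claimed $P_{[\gamma,w]}$.

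Finally I would isolate $\eta([\gamma,w])$, i.e. the $A^0$-order term $P_L$. The point is that a term $A^{2\langle\epsilon a,c\rangle}$ with $\langle\epsilon a,c\rangle\neq 0$ for generic $c$ in the polytope contributes, via Proposition \ref{limits}(1) (equivalently, via the Abel-summation/Taylor trick $(1-A^{2\beta})H_p=O(1/p)$ used throughout), only to the $O(1/p)$ cyclic part and \emph{not} to $P_L$ — its ``constant term'' is absorbed into the $f_\alpha$'s. So $P_L$ picks up a nonzero contribution only when some sign choice $\epsilon$ makes the linear form $\langle\epsilon a,\cdot\rangle$ vanish identically, which forces $a_j=0$ for all $j$, i.e. $w$ has all its ``$M$-coordinates'' zero; and one checks the analogous statement for the $L$-coordinates $b_j$ using that the full skein element, not just its torus projection, must be homologically trivial in $\Sigma\times S^1$ to contribute a constant. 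When $w=0$ every $\gamma_j$ carries the empty/trivial curve and $\langle 0,0\rangle_T$ acts as the scalar $2$ on every color line, so $\tr_p([\gamma,0]) = 2^k\cdot\frac{\dim V_p(\Sigma)}{\dim V_p(\Sigma)}=2^k$ exactly, giving $\eta([\gamma,0])=2^k$; when $w\neq 0$ no sign choice kills the linear form and $P_L=0$. The main obstacle I anticipate is bookkeeping the framing/normalization signs $(-1)^{a_j+b_j}A^{a_jb_j}$ together with the quasi-polynomial (not polynomial) nature of the Verlinde multiplicities $N_p(c)$ — getting these to assemble into genuine elements of $\mathcal{C}$ (continuous, compactly supported, piecewise polynomial) with the periodicity packaged correctly into the index $\alpha$ modulo $2\beta$, rather than into spurious lower-order corrections, is the delicate part and is presumably what Section \ref{proof cyclic} is devoted to.
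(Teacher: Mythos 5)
Your overall strategy is the paper's: cut $\Sigma\times S^1$ along the tori $\gamma_j\times S^1$, let the quantum torus act on the torus blocks, write $\tr_p[\gamma,w]$ as a state sum over colorings weighted by Verlinde-type multiplicities for the cut surface, and estimate these multiplicities as lattice-point counts in dilated polytopes (this is exactly Lemma \ref{calcul}, the gluing formula in Subsection \ref{multi}, and Proposition \ref{comptage} with Lemma \ref{integral}). However, there is a concrete error at the start of your computation: the operator $\Phi_j(\langle a_j,b_j\rangle_T)$ does \emph{not} act diagonally on the colorings of $\gamma_j$ unless $b_j=0$. Since $L\theta_l=\theta_{l+1}$, the monomial $M^{a_j}L^{b_j}$ shifts the color by $b_j$, and the correct matrix coefficients (Lemma \ref{calcul}) are supported on $m_j\equiv\pm(n_j+\epsilon_j b_j)\bmod p$, with reflection terms coming from $e_l=\tfrac{1}{\sqrt2}(\theta_l-\theta_{-l})$. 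Consequently the state sum is not $\sum_c N_p(c)\prod_j\bigl(A^{2a_jc_j}+A^{-2a_jc_j}\bigr)$ but
$\sum_{n,m,\epsilon,\zeta}\prod_j A^{2\epsilon_j a_j n_j}\,\zeta_j\,\delta^p_{n_j+\epsilon_j b_j-\zeta_j m_j}\;C_{\gamma,p}(n,m)/Z_p(\Sigma\times S^1)$,
where $C_{\gamma,p}(n,m)$ involves two \emph{different} boundary colors on the cut surface; unfolding $\delta^p_i=\sum_s\delta_{i+ps}$ is where the cyclic structure actually enters. Your Ehrhart/volume step is the right tool, but it must be applied to these shifted slices, not to a diagonal multiplicity; as written, your state sum is wrong whenever some $b_j\neq0$. (The worry about the framing factor $(-1)^{a+b}A^{ab}$ is a red herring, since $\langle a,b\rangle_T$ is used directly.)

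The second, more serious gap is the identification of the constant term for $w\neq0$. Your argument handles only the case where some $a_j\neq0$ (oscillating exponent), and for the remaining case (all $a_j=0$, some $b_j\neq0$) you appeal to homological triviality of the skein in $\Sigma\times S^1$; that is not a proof, and the criterion is off the mark. In that regime the matrix entries contain no powers of $A$ at all, so nothing can be ``absorbed into the $f_\alpha$'s'': one is left with $A$-independent ratios of the form $\sum_n C_{\gamma,p}(n,n+\epsilon b)/\dim V_p(\Sigma)$, and what governs them is the mod-$2$ admissibility (parity) of the shifted boundary colors in the lattice $\Lambda$, not integral homology. Concretely, for a single nonseparating curve and $a=0$, $b=2$, each monomial link has nonzero class $\pm2[S^1]$ in $H_1(\Sigma\times S^1;\Z)$, so your criterion predicts constant $0$; but the parities match, each slice count satisfies $C_{\gamma,p}(n,n\pm2)=C_{\gamma,p}(n,n)+O(r)$, and the ratios tend to $1$, not $0$. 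Equivalently, $(0,1)_T^2=(0,2)_T+2[\emptyset]$ while $\tr_p\bigl((0,1)_T^2\bigr)=\dim V_p(\Sigma;2,2)/\dim V_p(\Sigma)$, an $A$-independent quantity whose limit is $4$ in genus $2$, so the constant attached to the nonzero-weight part is $2$ rather than $0$. This shows that the dichotomy ``$2^k$ if $w=0$, $0$ otherwise'' is precisely the delicate point: it requires the careful bookkeeping of the deltas, the sign sums $\epsilon,\zeta$, and the parity/lattice analysis carried out in Proposition \ref{comptage} (and revisited in the proof of Theorem \ref{sigma}), and cannot be settled by the homological hand-wave in your last step.
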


This theorem conjugated with the following lemma proves Proposition \ref{algo}. 
\begin{lemma}\label{area}
Let $T$ be the standard torus and $\alpha$ be the 1-form on $H_1(T,\R)$ given by $\alpha_x(y)=\det(x,y)$ where $\det$ stands for the intersection product. 

Let $x_0,\ldots,x_k$ be vectors in $H_1(T,\Z)$. We denote by $P(x_0,\ldots,x_k)$ the concatenation of the segments generated by the vectors $x_0,\ldots,x_k$. For $x\in H_1(T,\Z)$, we denote by $(x)_T$ the corresponding vector in $\mathcal{K}(T\times [0,1])$. We have then the following formula:

$$(x_0)_T\cdots (x_k)_T=\sum_{\epsilon_1,\ldots,\epsilon_k=\pm 1} A^{\int_{P(x_0,\epsilon_1 x_1,\ldots,\epsilon_k x_k)}\alpha}(x_0+\epsilon_1x_1+\cdots \epsilon_kx_k)_T$$
\end{lemma}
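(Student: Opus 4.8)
The plan is to prove Lemma \ref{area} by induction on $k$, using the product-to-sum formula for the basis elements $(a,b)_T$ as the engine and keeping careful track of how the exponents of $A$ assemble into the integral $\int_{P}\alpha$. First I would record what the product-to-sum formula says in the notation of the lemma: for $x=(a,b)$ and $y=(c,d)$ in $H_1(T,\Z)$, the identity $(x)_T(y)_T=A^{ad-bc}(x+y)_T+A^{-(ad-bc)}(x-y)_T$ is exactly $(x)_T(y)_T=\sum_{\epsilon=\pm1}A^{\epsilon\det(x,y)}(x+\epsilon y)_T$, since $\det((a,b),(c,d))=ad-bc=\alpha_x(y)$. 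Thus the case $k=1$ is the product-to-sum formula, once one checks that $\int_{P(x_0,\epsilon_1 x_1)}\alpha=\epsilon_1\det(x_0,x_1)$: the path $P(x_0,\epsilon_1x_1)$ is the segment along $x_0$ followed by the segment along $\epsilon_1x_1$, and integrating the linear $1$-form $\alpha$ (whose value at the point $z$ on the $j$-th segment is $\alpha_z$) picks up a contribution only from the second segment, equal to $\alpha_{x_0}(\epsilon_1x_1)=\epsilon_1\det(x_0,x_1)$, because $\alpha$ vanishes identically along the first segment (there $z$ is a multiple of $x_0$ and $\det(x_0,x_0)=0$).

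Next, for the inductive step I would assume the formula for $k-1$ and write $(x_0)_T\cdots(x_k)_T=\big((x_0)_T\cdots(x_{k-1})_T\big)(x_k)_T$. Applying the inductive hypothesis to the first $k-1$ factors expresses this as a sum over $\epsilon_1,\ldots,\epsilon_{k-1}=\pm1$ of $A^{\int_{P(x_0,\epsilon_1x_1,\ldots,\epsilon_{k-1}x_{k-1})}\alpha}$ times $(y)_T(x_k)_T$, where $y=x_0+\epsilon_1x_1+\cdots+\epsilon_{k-1}x_{k-1}$. Then the $k=1$ case applied to $(y)_T(x_k)_T$ introduces a further sum over $\epsilon_k=\pm1$ with extra exponent $\epsilon_k\det(y,x_k)$ and new vector $y+\epsilon_kx_k=x_0+\epsilon_1x_1+\cdots+\epsilon_kx_k$, which is the right-hand vector in the claimed formula. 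It remains to identify the total exponent with $\int_{P(x_0,\epsilon_1x_1,\ldots,\epsilon_kx_k)}\alpha$. This is the additivity-of-area step: the polygonal path $P(x_0,\ldots,\epsilon_kx_k)$ is obtained from $P(x_0,\ldots,\epsilon_{k-1}x_{k-1})$ by appending one more side $\epsilon_kx_k$, and one checks directly that $\int_{P(x_0,\ldots,\epsilon_kx_k)}\alpha=\int_{P(x_0,\ldots,\epsilon_{k-1}x_{k-1})}\alpha+\alpha_{y}(\epsilon_kx_k)$, where again $y$ is the endpoint of the shorter path, i.e. the sum of its side vectors. Indeed $\alpha$ is the linear $1$-form $\alpha_z(\cdot)=\det(z,\cdot)$, so along the appended segment parametrized by $z=y+s\epsilon_kx_k$, $s\in[0,1]$, we get $\int_0^1\det(y+s\epsilon_kx_k,\epsilon_kx_k)\,ds=\det(y,\epsilon_kx_k)=\alpha_y(\epsilon_kx_k)$; unwinding $\det$ gives $\epsilon_k\det(y,x_k)$, matching the extra exponent produced above.

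The only genuine subtlety, and the step I expect to be the main obstacle, is the bookkeeping of signs and base points in the area functional: one must be consistent about whether $\alpha$ is pulled back along the concatenated path with all segments based at the running endpoint (as opposed to all based at the origin), because $\det(x_0,x_0)=0$ but $\det$ of a later side against the accumulated sum is generally nonzero, and the product-to-sum formula encodes precisely the latter. Once one fixes the convention that $P(x_0,\ldots,x_k)$ is the concatenation starting at $0$ with successive segments $x_0,x_0{+}x_1,\ldots$ and integrates $\alpha_z(dz)=\det(z,dz)$, the computation above shows that each new edge contributes exactly the $\det$-pairing of the current endpoint with the new edge vector, so the exponents telescope correctly and the induction closes. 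Everything else — existence of the product-to-sum formula, the fact that $\langle a,b\rangle_T$ and $(a,b)_T$ differ only by the unit $(-1)^{a+b}A^{ab}$ which can be absorbed or ignored for the purpose of this lemma (which is stated for the $(\cdot)_T$ normalization) — is routine and already available from the review of \cite{fro-gel,sallenave} recalled above.
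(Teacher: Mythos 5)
Your proof is correct and follows exactly the paper's route: induction on $k$ driven by the Frohman--Gelca product-to-sum formula, with the key point being the additivity identity $\int_{P(x_0,\ldots,x_{k+1})}\alpha=\int_{P(x_0,\ldots,x_k)}\alpha+\det(x_0+\cdots+x_k,x_{k+1})$, which you verify by the same segment-by-segment computation (the new edge pairs against the running endpoint, and $\det$ of a vector with itself vanishes). Your spelled-out base case and the observation that the exponent $A^{-ad+bd}$ in the paper's product-to-sum display should read $A^{-(ad-bc)}$ are just more detailed versions of what the paper leaves implicit.
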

\begin{proof}
This is a generalization of the product-to-sum formula which can be proved by an immediate induction, observing that $\int_{P(x_0,\ldots,x_{k+1})}\alpha=\int_{P(x_0,\ldots,x_k)}\alpha+\det(x_0+\cdots+x_k,x_{k+1})$. 
\end{proof}
The main application of this lemma will concern the case when $x_0+\epsilon_1x_1+\cdots \epsilon_kx_k=0$ in which case Stokes formula and the equality $d\alpha=2\det$ implies that $\int_{P(x_0,\epsilon_1x_1,\ldots,\epsilon_kx_k)}\alpha=2\area(x_0,\epsilon_1x_1,\ldots,\epsilon_k x_k)$. 

\subsection{Application to the AMU conjecture}

In this section, we prove Theorem \ref{incompressible}.   %a base point $x\in \Sigma$, a loop $\gamma:S^1\to \Sigma$ whose image $\Gamma$ is {\it graph incompressible} in the sense that no cycle 

We define the degree of a non-zero Laurent polynomial by the following formula:
$$\deg P=\inf \{n\in \N, P(A)=\sum_{i=-n}^n c_i A^i\}.$$

We fix a surface $\Sigma$ of genus $g$ and start the proof with two technical lemmas. 
\begin{definition} \label{graph height}
Let $G$ be a graph embedded in $\Sigma$ with one oriented edge. %We associate to it a banded graph $\hat{G}$ in $\Sigma\times S^1$ by the following procedure. 
Consider a tubular neighborhood $V$ of $G$ and a map $f:V\to S^1$ which is constant equal to $1$ out of the labeled edge and makes one positive turn along the oriented edge. We define the following banded graph:
$$\hat{G}=\{(x,f(x)),x\in V\}\subset \Sigma\times S^1.$$ Moreover we denote by $\partial \hat{G}$ the banded link in $\Sigma \times S^1$ defined by the boundary of $\hat{G}$.
\end{definition}

Given a banded graph $G\subset \Sigma\times S^1$. We say that a component $C$ of $G$ bounds a disc if there is an embedded disc $D\subset \Sigma\times S^1$ such that $D\cap G=C$. 

\begin{lemma} \label{bound}
Let $G$ be a graph embedded in $\Sigma$ with one oriented edge and suppose that the component of $G$ containing the arrow is not the boundary of a disc. Then the degree of $\eta (\partial \hat{G})$ is bounded by twice the number of components of $\partial \hat{G}$ bounding a disc in $\Sigma\times S^1$. 
\end{lemma}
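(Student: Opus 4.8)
\textbf{Proof plan for Lemma \ref{bound}.}

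The plan is to compute $\eta(\partial\hat G)$ explicitly using Theorem \ref{existence} and then estimate the degree of the resulting Laurent polynomial. First I would analyze the structure of $\partial\hat G\subset\Sigma\times S^1$. Isotoping the tubular neighborhood $V$ of $G$ to lie inside $\tilde\gamma\times S^1$ for a multicurve $\gamma$ carrying the first-factor projection of $\partial\hat G$, one sees that $\partial\hat G$ is a banded link supported in a product neighborhood of a multicurve; using the skein relations and Proposition \ref{multi_span} (really its proof), one writes $\partial\hat G$ as a $\Z[A^{\pm1}]$-linear combination of weighted multicurves $[\gamma,w]$. The key point is that the oriented edge of $G$ contributes exactly one positive ``vertical'' turn, so that the homology class of a component of $\partial\hat G$ in the $S^1$-direction is governed by whether that component runs along the labeled edge; a component that does not meet the labeled edge is parallel to $\Sigma\times\{pt\}$ (i.e.\ has zero weight in the $S^1$-factor). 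By Theorem \ref{existence}, only the weight-zero weighted multicurves survive in $\eta$, and each contributes $2^k$ where $k$ is the number of its components.

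Next I would bring in the hypothesis that the component of $G$ containing the arrow does not bound a disc in $\Sigma$. Expanding the ``vertical'' part of $\partial\hat G$ near the labeled edge using the product-to-sum / area formula of Lemma \ref{area} produces a sum over sign choices $\epsilon_j=\pm1$, with each term carrying a power $A^{2\,\area(\cdots)}$ and an underlying multicurve whose class is the corresponding signed sum $x_0+\epsilon_1x_1+\cdots+\epsilon_kx_k$. The surviving terms under $\eta$ are those with total class zero. Here the non-bounding hypothesis enters: a weight-zero combination arising from the labeled edge of $\partial\hat G$ would force an Eulerian-type subloop through the arrowed component to be null-homotopic, contradicting that this component does not bound a disc — hence the terms supported (in the $\Sigma$-direction) on the arrowed component all die, and only the components of $\partial\hat G$ that already bound discs in $\Sigma\times S^1$ can contribute. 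For each such disc-bounding component the corresponding weighted-multicurve expansion is just a Kauffman bracket computation inside a ball, contributing a factor of degree at most $2$ (the bracket of an unknot, or of a small tangle, has $A$-degree at most $2$ after normalization). Multiplying these, the total $A$-degree of $\eta(\partial\hat G)$ is at most $2$ times the number of components of $\partial\hat G$ that bound a disc in $\Sigma\times S^1$.

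Concretely, the steps in order are: (i) put $\partial\hat G$ in product form over a multicurve and expand into weighted multicurves, tracking how the single positive vertical turn on the labeled edge distributes among components; (ii) apply Theorem \ref{existence} to kill all nonzero-weight terms, so $\eta(\partial\hat G)$ is a sum of $2^{k}$'s weighted by $A$-powers read off from Lemma \ref{area}; (iii) use the non-bounding hypothesis on the arrowed component to show no weight-zero term can involve that component nontrivially, so the only contributions come from disc-bounding components; (iv) identify the contribution of each disc-bounding component as a local Kauffman-bracket factor of degree $\le 2$ and multiply. The main obstacle I anticipate is step (iii): making precise the combinatorial/topological statement that a cancellation of homology classes forced by the labeled edge would yield an Eulerian cycle in $G$ bounding a disc, i.e.\ correctly translating ``weight-zero weighted multicurve supported near the arrow'' into ``the arrowed component of $G$ bounds a disc in $\Sigma$'' and hence in $\Sigma\times S^1$. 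A secondary technical point is bookkeeping the banded (framing) structure so that the degree-$2$ estimate per disc-bounding component is tight and the powers of $A$ from Lemma \ref{area} are correctly accounted for; but this is routine once (iii) is in place.
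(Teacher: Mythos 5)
Your framework (expand $\partial\hat G$ into weighted multicurves, use Theorem \ref{existence} to kill nonzero weights, read off exponents from Lemma \ref{area}, and attribute a factor $-[2]$ of degree $2$ to each disc-bounding component) is exactly the paper's toolkit, and your steps (i), (ii), (iv) are fine. The gap is step (iii), which is both false as stated and argued from the wrong hypothesis. It is not true that the weight-zero terms involving the arrowed, non-disc-bounding components ``all die'': the band over the labeled edge contributes \emph{two} boundary arcs winding in the $S^1$-direction (two arrows on the multicurve $\partial G$), and for instance when both arrows lie on the same component and their windings cancel, the remaining multicurve is an honest multicurve in $\Sigma$ whose $\eta$-value is a nonzero integer (the paper's Case 1, giving constants such as $\eta\bigl((1,0)_T^n\bigr)=\binom{n}{n/2}$); similar nonzero integer contributions occur when the two arrows sit on parallel non-trivial components. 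So the non-trivial part does contribute, and the statement you actually need is the weaker but nontrivial one: its contribution has \emph{degree zero}. That is the real content of the lemma, and it requires a case analysis on where the two arrows sit on $\partial G$, checking via Lemma \ref{area} that for each configuration either no zero-sum sign choice exists (the term vanishes, as for $(1,0)_T^n(1,1)_T(1,0)_T^m$) or every zero-sum sign choice has vanishing area exponent (the term is an integer, as for $(0,1)_T^n(1,1)_T^2(0,1)_T^m$ and $(1,1)_T(1,0)_T^n(1,1)_T$). Your proposal contains no substitute for this computation.

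Moreover, your justification of (iii) via ``an Eulerian-type subloop would be null-homotopic'' imports Euler-incompressibility, which is a hypothesis of Theorem \ref{incompressible}, not of this lemma. The hypothesis actually available here --- that the component of $G$ carrying the arrow is not the boundary of a disc --- is used in the paper only to exclude one specific configuration: the two arrows lying on two parallel \emph{trivial} components of $\partial G$ (for which the degree-zero conclusion could fail), since that configuration would force the arrowed component of $G$ to bound a disc. As a sanity check that (iii) cannot be repaired: if the arrowed part really contributed $0$, then $\eta(\partial\hat G)$ would vanish whenever any component fails to bound, and the non-vanishing of the extreme coefficients $c_{\pm 4N}$ in Theorem \ref{incompressible} (which rests on a strictly positive contribution from the all-positive smoothing) would be impossible.
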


\begin{proof}
Each trivial component produces a factor $-[2]$ by the Kauffman relations: this shows that the bound is optimal. Remove all these components from $\partial \hat{G}$. We are reduced to prove that the polynomial $\eta (\partial \hat{G})$ has degree zero. Represent the directed edge by 2 arrows on the multicurve $\partial G\subset \Sigma$. The lemma follows from a case by case study of the possible configurations of arrows.

{\bf Case 1:} The two arrows belong to the same component and cancel. This component is non-trivial as we removed them before starting. We conclude with the following observation: the polynomial associated to a multicurve in $\Sigma$ is a constant. Indeed, the multicurve is a union of parallel copies of curves of type $(1,0)_T$. Lemma \ref{area} gives $\eta (1,0)_T^n=\binom{n}{n/2}$ which is an integer.

{\bf Case 2:} The two arrows belong to the same component $\gamma$ and add. This is indeed impossible by considering $\gamma$ as a boundary curve of the component of the banded graph containing the arrows. By construction, the two arrows are in opposite directions relatively to this orientation and hence cannot add. 

%{\bf Case 2:} The two arrows belong to the same non trivial component and add. Then in a neighborhood of this component we have $(1,0)_T^n(1,2)_T(1,0)_T^m$. The product-to-sum formula tells that this is a sum of terms of the form $(p,\pm 2)_T$ hence its evaluation vanishes. 

%(A DETAILLER) {\bf Case 3:}  The two arrows belong to the same trivial component and add. This case is not possible because of orientation.

{\bf Case 3:} The two arrows belong to two non parallel components. Hence, one of them is non-trivial and its neighborhood has the form $x=(1,0)_T^n(1,1)_T(1,0)_T^m$. Invoking Lemma \ref{area}, there are no closed path in the expansion of $x$, hence $\eta(x)=0$. 

{\bf Case 4:} The two arrows belong to two parallel and trivial components. Then these curves bound a trivial circle containing the arrow which is forbidden by assumption. 

{\bf Case 5:} The two arrows belong to parallel and consecutive non-trivial components. Then this corresponds to $x=(0,1)_T^n (1,1)_T^2(0,1)_T^m$. Again by Lemma \ref{area}, any closed path in the  expansion of $x$ has vanishing area, hence $\eta(x)$ is an integer. 

{\bf Case 6:} The two arrows belong to parallel and non-consecutive non-trivial components. Then, noticing that there exists an embedded arc which join them, we are in the configuration $(1,1)_T(1,0)_T^n(1,1)_T$. We conclude as in the Case 5. 
\end{proof}

\begin{lemma} \label{inequalities}
Let $G$ be an embedded graph with one oriented edge. Let $n$ and $e$ be respectively the number of vertices and edges of $G$. Let 
\begin{itemize}
\item[-]$u$ be the number of components of $\partial\hat{G} $ bounding simply connected components of $\hat{G}$
\item[-]$v$ be the number of components of $\partial\hat{G} $ bounding a disc and not counted in $u$. 
\end{itemize}
%(resp. not bounding simply connected components of $\hat{G}$ but bounding a disc in $\Sigma$). 
Suppose that $G$ is Euler-incompressible and at most quadrivalent. Then we have the following inequalities $$e+u\le 2n\quad \text{and} \quad  v \le n$$ 
Moreover $v-n = u+e-2n = 0$ implies that $G$ is a disjoint union of circles (none of them bound a disc in $\Sigma$ since $G$ is Euler-incompressible).
\end{lemma}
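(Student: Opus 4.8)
The plan is to analyze the combinatorics of the embedded graph $G$ and the structure of its "thickened" version $\hat{G}\subset\Sigma\times S^1$. First I would set up notation carefully: $G$ is embedded in $\Sigma$, at most quadrivalent, with $n$ vertices and $e$ edges, and $\hat{G}$ is the banded graph obtained by pushing one oriented edge once around the $S^1$ direction. The boundary $\partial\hat{G}$ is a disjoint union of circles in $\Sigma\times S^1$; each such circle either (a) bounds a simply connected component of $\hat{G}$ (these are the $u$ circles — morally, boundaries of disc-components of the fattened graph), (b) bounds a disc in $\Sigma\times S^1$ without being of type (a) (these are the $v$ circles), or (c) neither. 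The key point is to translate the hypotheses (Euler-incompressibility, at most quadrivalent) into numerical constraints via an Euler-characteristic / double-counting argument on $G$ and its regular neighborhood.

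The main steps, in order, would be: (1) Establish $e+u\le 2n$. Since $G$ is at most quadrivalent, summing vertex valences gives $2e=\sum_v \deg(v)\le 4n$, i.e. $e\le 2n$; this already gives the inequality when $u=0$, so the content is to show that each simply-connected component contributing to $u$ "uses up" enough slack. A simply connected component of $\hat{G}$ is contractible, hence homotopy equivalent to a point, so as a graph it is a tree; a tree with $n_i$ vertices has $n_i-1$ edges and its regular-neighborhood boundary is a single circle. Partitioning $G$ into its connected components, a tree-component with $n_i$ vertices contributes $1$ to $u$ and has $e_i=n_i-1$, so $e_i+1\le 2n_i$ iff $n_i\ge 0$ — too weak directly, so one must instead argue that a tree component with at least one vertex that is genuinely at most quadrivalent still satisfies $e_i+1 = n_i \le 2n_i$, and a component that is not a tree has $e_i\ge n_i$ but contributes $0$ to $u$ for this count and $e_i\le 2n_i$ from quadrivalence; careful bookkeeping of which components are counted where yields $e+u\le 2n$. (2) Establish $v\le n$: each disc in $\Sigma\times S^1$ bounded by a component of $\partial\hat{G}$ and not already counted in $u$ must, by Euler-incompressibility, project to something non-trivial in $\Sigma$; I would bound the number of such discs by the number of faces or by the number of vertices using that distinct such discs force distinct "features" of $G$. (3) Analyze the equality case $v=n$ and $u+e=2n$ simultaneously: equality $e+u=2n$ forces $G$ to be $4$-regular wherever it has vertices together with all the tree-estimate slacks being tight, and $v=n$ forces maximal disc production; I would show these two can only hold simultaneously when $G$ has no vertices at all, i.e. is a disjoint union of circles, and then Euler-incompressibility prevents any of those circles from bounding a disc in $\Sigma$.

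The hard part will be step (2) and the equality analysis in step (3): translating "bounds a disc in $\Sigma\times S^1$" into a usable constraint on the planar combinatorics of $G\subset\Sigma$. The subtlety is that a circle in $\Sigma\times S^1$ can bound a disc even when its projection to $\Sigma$ is complicated, because the $S^1$-direction gives room to unknot; Euler-incompressibility is precisely the hypothesis that rules out the "cheap" discs coming from Eulerian subcycles bounding in $\Sigma$, so the argument must show that any other disc-bounding boundary circle is accounted for by a vertex of $G$. I expect this to require a direct case analysis of how $\partial\hat{G}$ decomposes near each vertex (the local models at a $k$-valent vertex for $k\le 4$, with the twisted edge passing through or not), combined with the observation that a null-homotopic loop in $\Sigma\times S^1$ has zero total winding in the $S^1$-factor, which constrains how the twisted edge can appear along a disc-bounding component.
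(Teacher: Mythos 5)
Your step (1) is essentially the paper's argument (the paper phrases it via Euler characteristics of components, identifying the $u$ circles with the tree components of $G$, and uses $e_i\le 2n_i$ from quadrivalence), and it goes through. The genuine gap is in steps (2) and (3), which you yourself flag as ``the hard part'' but for which you only offer hopes (``bound the number of such discs by the number of faces or vertices using that distinct discs force distinct features'', ``maximal disc production conflicts with $4$-regularity''), not a mechanism. The paper's mechanism is concrete: for each component $G_i$ of negative Euler characteristic, cap off every boundary circle of a regular neighborhood of $G_i$ with an abstract disc to obtain a closed orientable surface $S_i$ with $\chi(S_i)=n_i-e_i+w_i$, where $w_i$ is the number of boundary circles. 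If $S_i$ is a sphere, every boundary circle is an Eulerian cycle, so Euler-incompressibility gives $v_i=0\le n_i$; otherwise $\chi(S_i)\le 0$, and the chain $-n_i+v_i\le n_i-e_i+v_i\le n_i-e_i+w_i=\chi(S_i)\le 0$ gives $v_i\le n_i$, whence $v\le n$ after checking (again via Euler-incompressibility, plus the winding-number remark you mention) that circle components contribute nothing to $v$, so $v=\sum_i v_i$. Nothing in your plan plays the role of this capped-off surface, and a purely local vertex-by-vertex analysis will not produce the bound $v\le n$, since whether a boundary circle bounds a disc is a global question.

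The equality analysis is where your sketch would actually fail as stated: the obstruction is not that $4$-regularity and ``maximal disc production'' are combinatorially incompatible. Indeed, on a torus the standard $4$-valent graph with one vertex and two edges (two simple curves meeting once) has $v=n=1$ and $e+u=2n$ without being a union of circles, so the equality case genuinely uses the genus of $\Sigma$. In the paper, $v_i=n_i$ forces all the inequalities above to be equalities, so $\chi(S_i)=0$ and $w_i=v_i$; thus $S_i$ is a torus all of whose boundary discs are realized by embedded discs in $\Sigma$, which would yield an embedding of the closed torus $S_i$ into $\Sigma$ --- impossible when $g\ge 2$. Hence $I=\emptyset$, so $v=n=0$ and $G$ is a union of circles, none bounding by Euler-incompressibility. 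Your proposal contains no analogue of this torus-embedding contradiction, and without it (or some substitute using $g\ge 2$) step (3) cannot be completed.
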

\begin{proof}
Let $(G_i)_{i \in I}$ be the connected components of $G$ with negative Euler characteristic. 

Let $n_i$ (resp. $e_i$) be the number of vertices (resp. edges) of $G_i$. As $G_i$ is not a circle, one has $n_i>0$. Since $G_i$ is at most quadrivalent, we have $e_i\le 2n_i$ and hence $ -\chi(G_i) \le n_i$. We compute $$-\chi(G) = e-n = -u+ \sum_{i \in I} -\chi(G_i) \le -u + \sum_{i \in I} n_i \le -u+n$$ Form which we conclude $e+u\le2n$. 

Now let $w_i$  be the number of boundary components of $G_i$ and  $v_i$ be the number of boundary components of $G_i$ bounding a disc in $\Sigma$. Since $G_i$ is Euler-incompressible, we have $ \sum_{i \in I} v_i = v$. 

Consider the closed surface $S_i$ obtained by gluing discs to the boundary components of $G_i$. If $S_i$ is a sphere then any boundary component of $G_i$ is an Eulerian cycle which implies $v_i=0$ by Euler-incompressibility. The equation $v_i\le n_i$ follows in that case. If $S_i$ is not a sphere, we have $\chi(S_i)\le 0$.  From $e_i \le 2n_i$ and $v_i \le w_i$ we get \begin{equation} \label{eq0} -n_i+v_i \le n_i-e_i+v_i \le \chi(S_i)= n_i-e_i+w_i \le 0 \end{equation}  In any cases, $v_i\le n_i$, and summing over $i\in I$ we get $v \le  n$.

It remains to prove the last part of the lemma. Suppose that $v-n=u+e-2n =0$ and fix $i \in I$.  We have $v_i = n_i$. If $S_i$ is a sphere we have $v_i=n_i=0$ : this is not possible since a graph should have at least one vertex. Therefore (\ref{eq0}) implies $\chi(S_i)=0$ and $w_i=v_i$. Hence $G_i$ is embedded in a torus $S_i$ in such a way that any of its boundary components bounds a disc in $\Sigma$. This allows to define an embedding $S_i\to \Sigma$ : since the genus of $\Sigma$ is at least two, this is not possible. We conclude that $I$ is empty and $G$ is a disjoint union of circles.

\end{proof}

\begin{proof}[Proof of Theorem \ref{incompressible}]
We denote by $[\hat{\gamma},3]\in \mathcal{K}(\Sigma\times S^1)$ the skein element obtained by coloring $\hat{\gamma}$ (see Definition \ref{gammachapeau}) by the color 3 (2 in the \cite{bhmv} setting). In the setting of \cite{bhmv}, this corresponds to the insertion of the idempotent $f_2$ in $\partial\hat{\gamma}$ as shown in Figure \ref{idempotent}. 

\begin{figure}[htbp]
\centering
  \def\svgwidth{6cm}
 \executeiffilenewer{f2.svg}{f2.pdf}%
 {inkscape -z -D --file=f2.svg %
 --export-pdf=f2.pdf --export-latex}%
 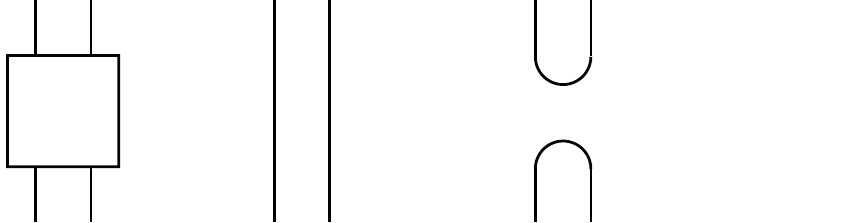%

  \caption{The idempotent $f_2$}
  \label{idempotent} 
\end{figure}
For any banded graph $\Gamma\subset\Sigma \times S^1$, we will denote by $[\Gamma,3]$ the element of $\mathcal{K}(\Sigma\times S^1)\otimes \Q(A)$ obtained by inserting the idempotent $f_2$ in $\partial \Gamma$ at all edges of $\Gamma$. 
Setting $[2]=A^2+A^{-2}$, these skein elements satisfy the following skein relation: 

%and use the following abbreviation $  \begin{minipage}[c]{0.8cm}
%\includegraphics[scale = 0.1]{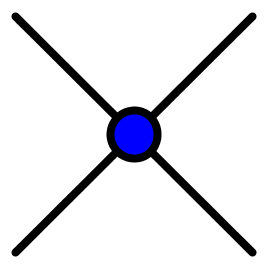}
%\end{minipage} = \begin{minipage}[c]{1cm}
%\includegraphics[scale = 0.1]{cross1}
%\end{minipage}$. Considering that all edges are double edges cabled by the idempotent $f_2$, we can check the following local relation : 

\begin{equation} \label{skein_eq}  \begin{minipage}[c]{0.8cm}
\includegraphics[scale = 0.1]{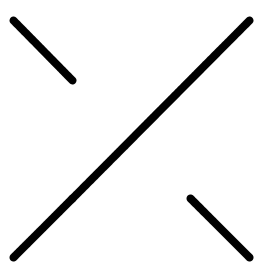}
\end{minipage} = A^4 \, \begin{minipage}[c]{0.8cm}
\includegraphics[scale = 0.1]{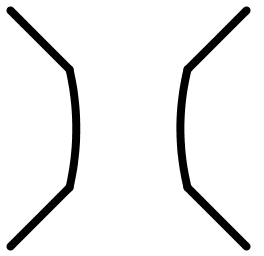}
\end{minipage}+ [2] \, \, \begin{minipage}[c]{0.8cm}
\includegraphics[scale = 0.1]{cross0}
\end{minipage} + A^{-4} \, \, \begin{minipage}[c]{1cm}
\includegraphics[scale = 0.1]{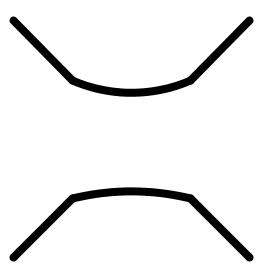}
\end{minipage} \end{equation}

We extend the map $\eta:\mathcal{K}(\Sigma\times S^1)\to \Z[A^{\pm 1}]$ by tensoring with $\Q(A)$ so that we can evaluate banded colored graphs. The proof will consist in investigating all terms in the state sum suggested by Equation \eqref{skein_eq}. 

Set $\Gamma=\gamma(S^1)\subset \Sigma$ and orient the edge going through the base point. 
Let $V$ be the set of vertices of $\Gamma$. Given $S : V \to \{-1,0,1 \}$, we define $\Gamma_S$  to be the graph obtained by transforming all vertices of $\Gamma$ as follows : if a vertex evaluates to $1$ under $S$ it is replaced by a positive smoothing, if a vertex evaluates to $-1$ under $S$ it is replaced by a negative smoothing and if a vertex evaluates to $0$ under $S$ it is not changed. $\Gamma_S$ can be viewed as a quadrivalent graph in $\Sigma$ with one directed edge. Consider the banded graph $\hat{\Gamma}_S$ in $\Sigma \times S^1$ defined by Definition \ref{graph height}. %We denote by $[\hat{\Gamma}_S,3]$ the element of $\mathcal{K}(\Sigma \times S^1)$ obtained by cabling all edges of $\hat{\Gamma}_S$ by the idempotent $f_2$.

For $S : V \to \{-1,0,1 \}$, we denote by $a_S,b_S,n_S$ respectively the number of preimages of $1,-1,0$. Applying the rule \eqref{skein_eq} to all the crossings of $\gamma$ gives 
$$ \eta([\hat{\gamma},3]) = \sum_{S : V \to \{-1,0,1\}} \, A^{4(a_S-b_S)}[2]^{n_S} \eta[\hat{\Gamma}_S,3] $$ 

Consider one term of the sum associated to a map $S$. To avoid heavy notations, we remove the subscript $S$ to $a,b$ and $n$. We have the decomposition $N=n+a+b$ and $n$ is the number of vertices of $\Gamma_S$. The original graph $\Gamma$ was a quadrivalent graph with $N$ vertices hence satisfying $\chi(\Gamma)=-N$. The graph $\Gamma_S$ being obtained by smoothing $a+b$ vertices satisfies $\chi(\Gamma_S)=-N+a+b=-n$. 

Computing $\eta [\hat{\Gamma}_S,3]$ involves inserting idempotents in all edges of $\Gamma_S$. Let $E_S$ be the set of edges of $\Gamma_S$. For $\xi : E_S \to \{ 0 ,1 \}$, we define $\Gamma_{S,\xi}$ to be the graph obtained by deleting all edges $e$ such that $\xi(e) = 1$. We define $s_\xi = \card(\xi^{-1}(\{1 \})$. According to the rule of Figure \ref{idempotent}, we have $$ \eta[\hat{\Gamma}_S,3] = \sum_{\xi : E_S \to \{0,1 \}} [2]^{-s_\xi}\eta(\partial \hat{\Gamma}_{S,\xi})$$
Now fixing $\xi : E_S \to \{ 0 ,1 \}$ we want to bound $ \deg A^{4a-4b}[2]^{n-s}\eta( \partial \hat{\Gamma}_{S,\xi})$ (here $s$ stands for $s_\xi$). Note that $\chi(\Gamma_{S,\xi})=-n+s$ and recall that as $\Gamma$ is Euler-incompressible, this property still holds for $\Gamma_{S,\xi}$. Hence Lemma \ref{bound} applies for $\Gamma_{S,\xi}$ and we have
\begin{equation}\label{eq1} \deg A^{4a-4b}[2]^{n-s}\eta( \partial \hat{\Gamma}_{S,\xi}) \le 4|a-b|+2(n-s+c)\end{equation} where $c$ is the number of components of $\partial \hat{\Gamma}_{S,\xi}$ which bound a disc in $\Sigma$. Let us write $c = u+v$ where $u$ is the number of simply connected components of $\Gamma_{S,\xi}$. 
 
We denote by $T$ the left-hand side of Equation \eqref{eq1}. Supposing that $a\ge b$, we can apply Lemma \ref{inequalities} to $\Gamma_{S,\xi}$. Hence (\ref{eq1}) gives $T \le 4N-8b+2(v-n)+ 2(u-s) \le 4N$ (we recall that $a+b+n=N$ and $\chi(\Gamma_{S,\xi})=-n+s=n-e$). This is strictly less that $4N$ unless $b=0$ and $n=0$ (by Lemma \ref{inequalities}), that is unless we smoothed all the vertices of $\Gamma$ in the positive way. Doing so, $\Gamma_S$ is a  collection of non-trivial curves (since $\Gamma$ is Euler-incompressible) colored by $3$, one of them being oriented.  

The computation can be done locally in an annulus times an interval. Expanding the idempotent $f_2$, an unoriented curve colored by $3$ is $[(1,0),3]=(1,0)_T^2-[\emptyset]=(2,0)_T+[\emptyset]$ and similarly, an unoriented curve colored by $3$ is equal to $[(1,1),3]=(2,2)_T+[\emptyset]$. Hence expanding $\eta([(1,0),3]^n[(1,1),3][(1,0),3]^m)$ with the help of Lemma \ref{area} gives a positive result. 
Repeating the argument for negative smoothing, we finally proved Theorem \ref{incompressible}. 
\end{proof}

\section{Proof of the cyclic expansion} \label{proof cyclic}
\subsection{Curve operators acting on the torus}\label{frogel}

Let $(e_l)_{l=1,\ldots,r-1}$ be the basis of $V_p(T)$ obtained by filling $T$ with $D^2\times S^1$ and coloring the core curve by $l$. Notice that we use the convention of \cite{lj}, that is $e_l$ corresponds to $(-1)^{l-1}u_{l-1}$ where $u_l$ is the basis used in \cite{bhmv}. 
The algebra $\mathcal{K}(T\times [0,1])$ acts naturally on $V_p(T)$ by stacking. Let us cover this action by an action of the quantum torus. 

Define $E_p=\bigoplus_{l\in \Z/p\Z}K_p(\sqrt{2})\theta_l$ as a $p$-dimensional Hermitian vector space formally generated by $\theta_l$ where $\langle \theta_l,\theta_m\rangle=\delta^p_{l-m}$. By convention, we set $\delta^p_i=1$ if $p|i$ and $0$ otherwise. The quantum torus $\mathcal{T}$ acts on $E_p$ by the formulas $M\theta_l=A^{2l}\theta_l$ and $L\theta_l=\theta_{l+1}$. 

\begin{lemma}
Let $\sigma$ be the involution of $E_p$ defined by $\sigma (\theta_l)=\theta_{-l}$. Then the map $V_p(T)\to E_p$ defined by $e_l\mapsto \frac{1}{\sqrt{2}}(\theta_l-\theta_{-l})$ is an isometry onto the $\sigma$-antisymmetric part which commutes with the action of $\mathcal{T}^\sigma$.
\end{lemma}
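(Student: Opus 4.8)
The plan is to prove the three assertions in turn. First I would describe the target: writing $p=2r$, the involution $\sigma$ fixes $\theta_0$ and $\theta_r$ and exchanges $\theta_l\leftrightarrow\theta_{-l}$ (indices mod $p$) for $1\le l\le r-1$, so the $\sigma$-antisymmetric part $E_p^{-}$ is free with basis $\{\theta_l-\theta_{-l}\mid 1\le l\le r-1\}$, of cardinality $r-1=\dim V_p(T)$, and $\theta_l-\theta_{-l}$ vanishes exactly for $l\equiv 0$ or $l\equiv r\pmod p$, which matches the truncation conventions $e_0=e_r=0$ attached to the basis $(e_l)$ used for the curve operators. Hence the formula $e_l\mapsto\tfrac{1}{\sqrt2}(\theta_l-\theta_{-l})$ is a well-defined linear isomorphism $V_p(T)\xrightarrow{\ \sim\ }E_p^{-}$.

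For the isometry I equip $V_p(T)$ with its TQFT Hermitian form, normalized (as in \cite{bhmv}) so that $(e_l)_{1\le l\le r-1}$ is orthonormal, and expand: for $1\le l,m\le r-1$,
\[\Big\langle\tfrac{1}{\sqrt2}(\theta_l-\theta_{-l}),\tfrac{1}{\sqrt2}(\theta_m-\theta_{-m})\Big\rangle=\tfrac12\big(\delta^p_{l-m}-\delta^p_{l+m}-\delta^p_{-l-m}+\delta^p_{m-l}\big)=\delta_{lm},\]
the two cross terms vanishing because $2\le l+m\le 2r-2<p$ and the outer terms each contributing $\delta_{lm}$ because $|l-m|<p$. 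So the map preserves the Hermitian forms.

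The equivariance is the substantial point. I would first note that $\sigma M\sigma=M^{-1}$ and $\sigma L\sigma=L^{-1}$ as operators on $E_p$, so conjugation by $\sigma$ implements the algebra involution; hence elements of $\mathcal{T}^\sigma$ commute with $\sigma$ and preserve $E_p^{-}$. Since the $(m,l)_T$ span $\mathcal{T}^\sigma$ as a $K_p(\sqrt2)$-module (Frohman--Gelca, Sallenave) and the intertwining property is additive in the algebra element, it suffices to verify it for each $(m,l)_T$ acting on each $e_j$. Using $(m,l)_T=(-1)^{l+m}A^{ml}(M^mL^l+M^{-m}L^{-l})$, one finds that $(m,l)_T$ sends $\theta_j$ to $(-1)^{l+m}A^{ml}\big(A^{2m(j+l)}\theta_{j+l}+A^{-2m(j-l)}\theta_{j-l}\big)$; antisymmetrizing in $j$ and comparing with the known formula for the action of the slope-$(m,l)$ curve operator on the basis $(e_j)$ of $V_p(T)$ (diagonal when $l=0$, tridiagonal in general, as recorded in \cite{bhmv,fro-gel,lj}) completes the check, the vanishing of $\theta_j-\theta_{-j}$ at $j\equiv 0,r\pmod p$ automatically reproducing the truncation relations $e_0=e_r=0$ so that no separate boundary analysis is needed.

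The hard part is not conceptual but lies in this last comparison of conventions: one must keep track of the twisting factor $(-1)^{l+m}A^{ml}$ in the definition of $(m,l)_T$, of the sign $(-1)^{l-1}$ relating $e_l$ to the basis $u_{l-1}$ of \cite{bhmv}, and of the discrepancy between the conventions of \cite{bhmv} and \cite{lj}, so that the classical curve-operator formulas align exactly with the quantum-torus action. Everything else reduces to the short computations indicated above.
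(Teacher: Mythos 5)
Your proposal is correct and takes essentially the same route as the paper, whose proof simply appeals to the known identification (Charles--March\'e, Section 2) and records the formulas $(1,0)_Te_l=(-A^{2l}-A^{-2l})e_l$ and $(0,1)_Te_l=-e_{l+1}-e_{l-1}$; your extra explicit checks of bijectivity onto the antisymmetric part and of the isometry are fine. One streamlining worth noting: since $(1,0)_T$ and $(0,1)_T$ generate the skein algebra of $T\times[0,1]$ (by the product-to-sum formula), the equivariance need only be verified on these two operators rather than on every $(m,l)_T$, which removes the dependence on the general slope-$(m,l)$ curve-operator formulas and reduces the convention bookkeeping to exactly the two lines the paper writes.
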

\begin{proof}
This is well-known, see for instance \cite[Section 2]{lj}. Indeed, we get the expected formulas 
$$(1,0)_Te_l=(-M-M^{-1})e_l=(-A^{2l}-A^{-2l})e_l$$ $$(0,1)_Te_l=(-L-L^{-1})e_l=-e_{l+1}-e_{l-1}.$$ 
\end{proof}

\begin{lemma}\label{calcul}We have the formula 
$$\langle \langle a,b\rangle_T e_l,e_m\rangle=A^{2a(l+b)}(\delta^p_{l+b-m}-\delta^p_{l+b+m})+A^{2a(-l+b)}(\delta^p_{l-b-m}-\delta^p_{l-b+m}).$$
\end{lemma}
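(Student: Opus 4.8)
The statement to prove is Lemma \ref{calcul}, the explicit matrix-coefficient formula
$$\langle \langle a,b\rangle_T e_l,e_m\rangle=A^{2a(l+b)}(\delta^p_{l+b-m}-\delta^p_{l+b+m})+A^{2a(-l+b)}(\delta^p_{l-b-m}-\delta^p_{l-b+m}).$$
The natural route is to compute directly in $E_p$ using the previous lemma, which identifies $V_p(T)$ with the $\sigma$-antisymmetric part of $E_p$ via $e_l\mapsto\frac{1}{\sqrt 2}(\theta_l-\theta_{-l})$, together with the defining formulas $M\theta_l=A^{2l}\theta_l$ and $L\theta_l=\theta_{l+1}$. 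The element $\langle a,b\rangle_T$ is, by its definition in Subsection \ref{frogel0}, equal to $M^bL^a+M^{-b}L^{-a}$ (note the index convention $\langle l,m\rangle_T=M^mL^l+M^{-m}L^{-l}$, so here the first slot $a$ is the $L$-exponent and $b$ the $M$-exponent).

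\textbf{Key steps.} First I would compute $M^bL^a\theta_l$. Applying $L^a$ gives $\theta_{l+a}$, and then $M^b$ multiplies by $A^{2b(l+a)}$, so $M^bL^a\theta_l=A^{2b(l+a)}\theta_{l+a}$; similarly $M^{-b}L^{-a}\theta_l=A^{-2b(l-a)}\theta_{l-a}=A^{2b(a-l)}\theta_{l-a}$. Wait — I should be careful to match the exponent bookkeeping to the target formula, which has $A^{2a(l+b)}$, i.e. the roles of $a$ and $b$ in the exponent are as in the $M^aL^b$ convention; so in fact one reads $\langle a,b\rangle_T=M^aL^b+M^{-a}L^{-b}$ and then $M^aL^b\theta_l=A^{2a(l+b)}\theta_{l+b}$ and $M^{-a}L^{-b}\theta_l=A^{-2a(l-b)}\theta_{l-b}=A^{2a(b-l)}\theta_{l-b}$, which is exactly what produces the two prefactors $A^{2a(l+b)}$ and $A^{2a(-l+b)}$ in the statement. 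So the second step is: substitute $e_l=\frac{1}{\sqrt2}(\theta_l-\theta_{-l})$, apply $\langle a,b\rangle_T$ linearly to get
$$\langle a,b\rangle_T e_l=\tfrac{1}{\sqrt2}\big(A^{2a(l+b)}\theta_{l+b}+A^{2a(b-l)}\theta_{l-b}-A^{2a(-l+b)}\theta_{-l+b}-A^{2a(b+l)}\theta_{-l-b}\big),$$
using $\langle a,b\rangle_T\theta_{-l}=A^{2a(-l+b)}\theta_{-l+b}+A^{2a(b+l)}\theta_{-l-b}$. The third step is to pair this with $e_m=\frac{1}{\sqrt2}(\theta_m-\theta_{-m})$ using $\langle\theta_i,\theta_j\rangle=\delta^p_{i-j}$, which contributes a factor $\frac12$ cancelling the two $\frac{1}{\sqrt2}$'s; expanding the eight resulting Kronecker deltas and grouping by the two distinct prefactors yields the claimed formula, after observing that $\delta^p_{(l+b)-m}=\delta^p_{-(-l-b)-(-m)}$-type symmetries merge the terms correctly.

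\textbf{Main obstacle.} There is no conceptual difficulty here; the only thing that needs care is the sign/index convention (whether $\langle a,b\rangle_T$ has $a$ as the $M$-exponent or the $L$-exponent) and making sure the $\sigma$-antisymmetrization collapses the eight terms into the stated four, which it does because $\theta_l\mapsto\theta_{-l}$ exchanges the pair coming from $\theta_l$ with the (sign-flipped) pair coming from $\theta_{-l}$. Concretely, the four terms pairing against $\theta_m$ give $A^{2a(l+b)}\delta^p_{l+b-m}+A^{2a(b-l)}\delta^p_{l-b-m}-A^{2a(b-l)}\delta^p_{b-l-m}-A^{2a(b+l)}\delta^p_{-l-b-m}$, and since $\delta^p$ is even this is $A^{2a(l+b)}\delta^p_{l+b-m}+A^{2a(b-l)}\delta^p_{l-b-m}-A^{2a(b-l)}\delta^p_{l-b+m}-A^{2a(b+l)}\delta^p_{l+b+m}$; combining with the analogous four terms from $-\theta_{-m}$ (which doubles these up, again cancelling a factor $2$ against the remaining $\frac12$) and regrouping by prefactor produces exactly $A^{2a(l+b)}(\delta^p_{l+b-m}-\delta^p_{l+b+m})+A^{2a(-l+b)}(\delta^p_{l-b-m}-\delta^p_{l-b+m})$. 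So the proof is a short, bookkeeping-only computation; the write-up should simply display the action of $M^aL^b$ and $M^{-a}L^{-b}$ on $\theta_{\pm l}$ and then take the inner product.
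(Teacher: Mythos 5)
Your computation is correct and is exactly the paper's argument: the proof there is precisely the ``direct computation'' you carry out, substituting $e_l=\frac{1}{\sqrt2}(\theta_l-\theta_{-l})$, acting by the quantum-torus generators, and pairing with $e_m$ using $\langle\theta_i,\theta_j\rangle=\delta^p_{i-j}$ and the evenness of $\delta^p$. You also correctly flag the only delicate point, the index convention in $\langle a,b\rangle_T$ (the paper's written normalization $\langle l,m\rangle_T=M^mL^l+M^{-m}L^{-l}$ would swap the roles of $a$ and $b$ in the stated formula), and your choice $\langle a,b\rangle_T=M^aL^b+M^{-a}L^{-b}$ is the one under which the lemma holds as written.
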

\begin{proof}
This is a direct computation, replacing $e_l$ with $\frac{1}{\sqrt{2}}(\theta_{l}-\theta_{-l})$. 
\end{proof}

\subsection{Reduction to weighted multicurves} \label{multi}
We remark that because Proposition \ref{multi_span} and Proposition \ref{skein_cyclo}, it is enough to prove that any weighted multicurve has cyclic expansion.

Let $\gamma=\gamma_1\cup\cdots \cup \gamma_k$ be a $k$-multicurve. We denote by $\Sigma_\gamma$ the surface obtained by surgery on every component of $\gamma$. For any $i$, we add two marked points $p_i,q_i$ from each side of the handle used in the surgery at $\gamma_i$ as in Figure \ref{fig_surgery}. 
 \begin{figure}[htbp]
\centering
  \def\svgwidth{8cm}
 \executeiffilenewer{surgery.svg}{surgery.pdf}%
 {inkscape -z -D --file=surgery.svg %
 --export-pdf=surgery.pdf --export-latex}%
 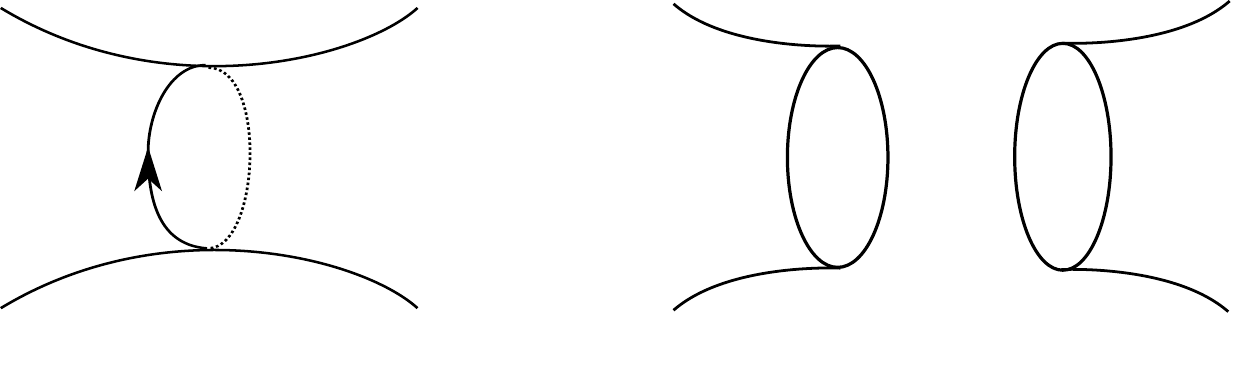%

  \caption{Surgery on $\Sigma$}
  \label{fig_surgery} 
\end{figure}

Finally, given a vector $x=(n_1,m_1,\ldots,n_k,m_k)\in \{1,\ldots,r-1\}^{2k}$ thought as a coloring of the points $p_i,q_i$, we set 
$$ C_{\gamma,p}(x) = \dim V_p(\Sigma_{\gamma},n_1,m_1,\ldots,n_k,m_k).$$

\begin{proof}[Proof of Theorem \ref{existence}] Because of 
Let $\gamma = \gamma_1 \cup ... \cup \gamma_k$ be a multicurve with weight $w=(a_1,b_1,\ldots,a_k,b_k)$.
Let $\gamma_j'$ and $\gamma_j''$ be two parallel copies of $\gamma_j$ such that $\Phi_j(T\times\{0,1\})=\gamma_j'\times S^1\cup \gamma_j''\times S^1$. 
Consider the basis $(e_i)_{i=1,\ldots,r-1}$ of $V_p(\gamma_j'\times S^1)$ obtained by filling $\gamma_j'\times S^1$ with $D^2\times S^1$ (the same with $\gamma_j''$).

The TQFT axioms imply the following equation where $n$ and $m$ are elements of $\{1,\ldots,r-1\}^k$. 

$$
\tr_p [\gamma,w]  = \sum_{n,m}  \left( \prod_{j=0}^k \langle\langle a_j,b_j\rangle_T \, e_{n_j},e_{m_j}\rangle\right) \, \frac{{C}_{\gamma,p}(n,m)}{ Z_p(\Sigma \times S^1)}$$

%Now set $\mathcal{D}_{a,p} = \{a_1,...,r-2-a_1 \} \times ... \times \{a_k,...,r-2-a_k \}$. We have

%\begin{align}
%\tr_p(\gamma) &  = \sum_{(n_1,\ldots, n_k) \in \mathcal{D}_{a,p}}  \sum_{m_1,\ldots, m_k = 0}^{r-2} \left( \prod_{j=0}^k \langle(a_j,b_j)_T \, e_{n_j},e_{m_j}\rangle\right) \, \frac{C_{\gamma,p}(n_1,m_1,\ldots,n_k,m_k)}{ Z_p(\Sigma \times S^1)} \notag \\
%& + \mathcal{O} \left( \frac{1}{p} \right) \notag
%\end{align}

Using Lemma \ref{calcul}, we have
\begin{multline}
\tr_p [\gamma,w] =\sum_{n,m}\prod_{j=1}^k\Big( A^{2n_ja_j}(\delta^{p}_{n_j+b_j-m_j}-\delta^{p}_{n_j+b_j+m_j})+\\
A^{-2n_ja_j}(\delta^{p}_{n_j-b_j-m_j}-\delta^{p}_{n_j-b_j+m_j}) \Big)\frac{{C}_{\gamma,p}(n,m)}{ Z_p(\Sigma \times S^1)}
\end{multline}

Introducing signs $\epsilon$ and $\zeta$ in $\{\pm 1\}^k$, we can rewrite the sum in the following way
$$\tr_p [\gamma,w]  = \sum_{n,m,\epsilon,\zeta} \prod_j A^{2\epsilon_jn_ja_j}\zeta_j\delta^{p}_{n_j+\epsilon_jb_j-\zeta_jm_j} \frac{{C}_{\gamma,p}(n,m)}{ Z_p(\Sigma \times S^1)} $$
We conclude using Proposition \ref{comptage} where $x$ stands for the $2k$-tuple $(n_1,m_1,\ldots,n_k,m_k)$, $l_j(x)=n_j-\zeta_j m_j$, $\alpha_j=-\epsilon_jb_j$ for $j=1,\ldots,k$, $D=\gcd(a_1,\ldots,a_k)$ and $l_0(x)=\frac{1}{D}\sum_j n_ja_j$. 
\end{proof}

\subsection{Counting points in polytopes}
\begin{proposition}\label{comptage}
Let $D$ be an integer, $L=(l_0,\ldots,l_k)$ be a surjective linear map $L:\Z^{2k}\to \Z^{k+1}$, and $\alpha=(\alpha_1,\ldots,\alpha_k)\in \Z^k$ some integers. Then setting
\begin{equation}\label{formuleF}
F_p(l,\alpha)=\sum_{x\in \{1,\cdots,r-1\}^{2k}}A^{2Dl_0(x)}\prod_{j=1}^k\delta^p_{l_j(x)-\alpha_j}C_{\gamma,p}(x)
\end{equation}
the quantity $\frac{F_p(l,\alpha)}{Z_p(\Sigma\times S^1)}$ has a cyclic expansion. 
\end{proposition}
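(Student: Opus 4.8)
The plan is to reduce the statement to an asymptotic count of lattice points in a sequence of dilated rational polytopes, and then apply Ehrhart-type quasi-polynomiality together with the structure of the Verlinde formula. First I would recall that $C_{\gamma,p}(x)=\dim V_p(\Sigma_\gamma,x)$ is, by the Verlinde formula, a quasi-polynomial in $p$ and in the color vector $x$; more precisely, writing $r=p/2$, the dimension $\dim V_p(\Sigma_\gamma,n_1,m_1,\ldots,n_k,m_k)$ is given by a finite trigonometric sum over $(r-1)$-th roots of unity, and after expanding it becomes a sum of terms of the form $A^{(\text{linear in }x)}$ times a Laurent polynomial in the relevant roots of unity, valid for $x$ in the interior of the Weyl alcove. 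Substituting this into \eqref{formuleF} and using the constraints $\delta^p_{l_j(x)-\alpha_j}$ to eliminate $k$ of the $2k$ coordinates of $x$, one is left with a sum over the integral points of a $k$-dimensional polytope $\Delta_p\subset\R^k$ (the image of the alcove $\{1,\ldots,r-1\}^{2k}$ under the affine constraints), each point weighted by $A^{2Dl_0(x)}$ times a bounded Laurent-polynomial factor coming from the Verlinde expansion. The polytope $\Delta_p$ is $p$ times a fixed rational polytope up to a bounded translation, so its integral points, together with the linear functional $l_0$, are governed by an Ehrhart quasi-polynomial.

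Next I would organize the sum according to the period of this quasi-polynomial. Because $l_0(x)=\frac1D\sum_j n_j a_j$ is linear and the polytope scales linearly in $p$, one can decompose the set of lattice points of $\Delta_p$ into finitely many families indexed by residues modulo a fixed integer $\beta$ (the lcm of the relevant denominators and of $D$ and of the periods appearing in the Verlinde quasi-polynomial), so that on each family the weight is exactly $A^{2\beta n+\alpha}$ for an integer parameter $n$ ranging over an interval of length $\sim p$. Summing the polynomial density of lattice points over each slice produces, after dividing by $Z_p(\Sigma\times S^1)=\dim V_p(\Sigma)$ — which is itself a polynomial in $p$ of the known degree $3g-3$ — exactly an expression of the shape
\begin{equation*}
P(A)+\frac1p\sum_{\alpha=0}^{2\beta-1}\sum_{n\in\Z}A^{2\beta n+\alpha}f_\alpha\!\left(\frac np\right)+O\!\left(\frac1p\right),
\end{equation*}
where each $f_\alpha\in\mathcal C$ is the (continuous, compactly supported, piecewise polynomial) function describing the leading lattice-point density along the corresponding slice as a function of the normalized variable $n/p$, and $P(A)\in\Z[A^{\pm1}]$ collects the contributions that survive at order $p^0$, i.e. the terms where the net exponent of $A$ is bounded independently of $p$ (these arise precisely from the locus $l_0(x)=0$, matching Lemma \ref{area}). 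The integrality of the coefficients of $P$ follows because $\tr_p[\gamma,w]\in K_p$ has a representation with integral coefficients and the norm $||\cdot||_p$ is used to make sense of the error term.

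I expect the main obstacle to be bookkeeping the error terms uniformly in the norm $||\cdot||_p$: the Verlinde formula is only quasi-polynomial on the interior of the alcove, and near the walls (where some $n_j$ or the dimension degenerates) one gets boundary corrections; one must check that these contribute only $O(1/p)$ in $K_p\otimes\R$, which requires controlling both the number of affected lattice points (codimension $\ge 1$, hence $O(p^{k-1})$ after dividing by $p^{3g-3}$ — here one uses $3g-3\ge k$, or more carefully that the relevant Euler-characteristic count of $\Sigma_\gamma$ keeps the denominator large enough) and the size of the coefficients. A secondary technical point is ensuring that the piecewise-polynomial functions $f_\alpha$ genuinely lie in $\mathcal C$, i.e. are continuous: this is where one invokes the standard fact that the Ehrhart-type densities coming from sections of a polytope vary continuously, the only possible jumps being at the finitely many breakpoints where facets appear or disappear, which are precisely the non-smooth points allowed by the definition of $\mathcal C$. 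Once these uniformity estimates are in place, the computation of the constant term $P$ and of the profile functions $f_\alpha$ is a direct, if lengthy, expansion, and combining with Proposition \ref{limits} yields Theorem \ref{thm existence} and hence Theorem \ref{thm1}.
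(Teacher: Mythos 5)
Your global plan --- group the lattice points according to the value of $l_0$, show that the leading density along each slice is a continuous, compactly supported, piecewise polynomial function of $n/p$, and argue that boundary effects are lower order --- is the plan of the paper, which however never goes through the trigonometric Verlinde formula: it uses the BHMV description of $C_{\gamma,p}(x)$ as the number of admissible colorings of a trivalent graph $\Gamma$, i.e.\ the number of points of an affine lattice $\Lambda$ in the slice of a fixed polytope $rP$ cut out by $x$, so that the whole of $F_p$ becomes a single lattice count in the slices $V_n=\{l_j=\alpha_j+ps_j,\ l_0=n\}$, estimated by the elementary Voronoi-cell Lemma \ref{integral}. The genuine gap in your sketch is quantitative. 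The weight carried by a lattice point of your residual $k$-dimensional polytope is not ``a bounded Laurent-polynomial factor'': on the interior of the alcove $C_{\gamma,p}(x)$ grows like $r^{\card E-2k}$ (with $E$ the edge set of $\Gamma$), and it is exactly this growth that makes $F_p$ comparable to $Z_p(\Sigma\times S^1)\sim r^{\card E-k}=r^{3g-3}$; with bounded weights the normalized quantity would be $O(p^{k-(3g-3)})$ and would tend to $0$, contradicting for instance Theorem \ref{sigma}. Your wall estimate inherits this mistake: the correct count is (near-wall points, $O(r^{k-1})$) times (fiber growth, $O(r^{\card E-2k})$) divided by $Z_p\sim r^{\card E-k}$, which is $O(1/r)$ --- not ``$O(p^{k-1})$ against $p^{3g-3}$ using $3g-3\ge k$''. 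More importantly, the one input your argument really needs --- a uniform estimate $C_{\gamma,p}(x)=r^{\card E-2k}\,\mathcal{V}(x/r)+O(r^{\card E-2k-1})$ with $\mathcal{V}$ continuous, compactly supported and piecewise polynomial, together with the determination of which slices $V_n\cap\Lambda$ are non-empty (a parity condition on $n$, and this is precisely where the surjectivity of $L$ is used) --- is asserted ``by the Verlinde formula'' but never justified. That uniform estimate, its continuity in the slice parameter, and the error control in the norm $||\cdot||_p$ are the entire content of the paper's proof; plain Ehrhart quasi-polynomiality of the dilated alcove does not deliver them by itself.

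A second, conceptual error is your identification of the polynomial part: $P(A)$ does not come from ``the locus $l_0(x)=0$''. After dividing by $Z_p$, any positive-codimension sublocus of the parameter polytope, in particular $\{l_0=0\}$, contributes only $O(1/p)$, so it cannot produce an order-one Laurent polynomial. In fact the expansion furnished by Proposition \ref{comptage} has no genuine polynomial part when $D\neq 0$: it is exactly of the form $\frac{1}{p}\sum_{n}A^{2Dn}f\bigl(\frac{n}{p}\bigr)+O\bigl(\frac{1}{p}\bigr)$, i.e.\ $P=0$, and the nonzero constants $2^k$ of Theorem \ref{existence} appear only in the degenerate case $w=0$, where $D=0$, the weight is identically $1$ and the Riemann sum itself is the constant. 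The terms $A^{2\area}$ of Lemma \ref{area} and Proposition \ref{algo} are generated earlier, by the product-to-sum formula in the skein algebra before this proposition is invoked, not by any sublocus of the present lattice count. This does not affect the truth of the statement you are proving, but as written your description of $P$ (and the ensuing integrality claim) does not follow from your argument.
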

\begin{proof}
Let $H$ be a disjoint union of handlebodies bounding $\Sigma_\gamma$ and $\Gamma$ be a unitrivalent banded graph embedded in $H$ such that $\Gamma\cap \Sigma=\partial \Gamma=\{p_1,q_1,\ldots,p_k,q_k\}$ and such that $H$ retracts on $\Gamma$. We will denote by $E$ the set of edges of $\Gamma$ and identify an element of $\partial \Gamma$ with the edge incident to it. 

Then it follows from \cite{bhmv} that $C_{\gamma,p}(x)$ is the cardinality of the set 
$$(\inter rP) \cap \Lambda \cap\{c_i=x_i,i\in \partial \Gamma\}\subset \R^E$$ 
where we have set
$$P=\{\tau:E\to [0,1], \tau_i\le \tau_j+\tau_k,\tau_i+\tau_j+\tau_k\le 2\}$$
$$\Lambda=\{c:E\to\Z, c_i+c_j+c_k \textrm{ odd}\}$$
In these two expressions $(i,j,k)$ runs over all triples of edges in $E$ incident to a same trivalent vertex of $\Gamma$. 

This comes from the fact that $V_p(\Sigma_\gamma,x)$ has a basis obtained by coloring the edges of $\Gamma$ with $c$ for $c\in r\inter P \cap \Lambda$ satisfying $c_i=x_i$ for all $i\in \partial\Gamma$, see Theorem  4.11 in \cite{bhmv}.

Observe that for all $i\in \Z$, $\delta^{p}_i=\sum_{s\in \Z}\delta_{i+ps}$ where $\delta_i=1$ if $i=0$ and $0$ otherwise. Plugging this into Formula \eqref{formuleF} gives $F_p(l,\alpha)=\sum_{s\in \Z^k}G_p(l,\alpha+ps)$ where $G_p$ has the same definition as $F_p$ with $\delta^p$ replaced with $\delta$. We will see that this sum is actually finite so that we are reduced to study the cyclic expansion of $G_p$. 

As an example of what will follow, taking $\gamma=\emptyset$, we have the equality $Z_p(\Sigma\times S^1)=\dim V_p(\Sigma)=\card P\cap \frac{1}{r}\Lambda$. Comparing the number of integral points with the volume gives the estimate

$$Z_p(\Sigma\times S^1)=r^{\dim P}\frac{\vol P}{\covol \Lambda}+O(r^{\dim P -1})$$
Here $\dim P=3g-3$ if $g>1$ and $1$ if $g=1$, $\vol(P)$ is computed from the Lebesgue measure on $\R^{E}$ and $\covol(\Lambda)$ is the the volume of $\R^{E}/\vec{\Lambda}$ where $\vec{\Lambda}$ is the vectorial part of the affine lattice $\Lambda$. One can compute $\covol(\Lambda)=2^{2g-3}$ if $g>1$ and $1$ if $g=1$. Notice that up to a normalization factor, the leading order is the volume of the moduli space $\homo(\pi_1(\Sigma),\textrm{SU}_2)/\textrm{SU}_2$ with the volume form associated to the Atiyah-Bott-Goldman symplectic structure. 

The proof of the general case will rely on the same kind of estimations. More precisely, let $V_n$ be the affine subspace of $\R^E$ given by the equations $l_j(x)=\alpha_j$ for $j=1,\ldots,k$ and $l_0(x)=n$. Then we have $G_p(l,\alpha)=\sum_{n\in \Z} A^{2nD}g_r(n)$ where 
$$ g_r(n)=\card (\inter rP)\cap V_n\cap \Lambda.$$

We would like to apply Lemma \ref{integral} to this situation in order to estimate $g_r(n)$. The lemma applies to the polytope $rP\cap V_n$ included in the euclidean space $V_n$ provided that $rP\cap V_n$ has non-empty interior and $V_n\cap \Lambda$ is an affine lattice in $V_n$. If the first assumption does not hold, it implies that $V_n\cap \inter(rP)=\emptyset$ and hence $g_r(n)=0$.  
Next, the linear part $\vec{V}$ of $V_n$ is by hypothesis the kernel of the linear forms $l_0,\ldots,l_k$ which are integral and linearly independent. It follows that $\vec{V}\cap \vec{\Lambda}$ is a free abelian group of rank $N-k-1$ where we have set $N=\card E=\dim P$. Hence, $V_n\cap \Lambda$ is a lattice in $V_n$ if and only if it is non-empty. 

We observe that this last condition only depends on $n$ modulo 2. Indeed, if $c:E\to \Z$ satisfies the equation defining $V_n$ and $\Lambda$ modulo $2$, we can add to $c$ a function $d:\partial \Gamma\to \Z$ which satisfies $2l_0(d)=n-l_0(c)$ and $2l_j(d)=\alpha_j-l_j(c)$. Such a function exists by the assumption that $L$ is surjective. It follows that we need to specify the parity of $n$. In the sequel, we will denote by $\nu\in \Z/2\Z$ a solution of the preceding system and restrict to those $n$'s which are congruent to $\nu$. 

For $n=\nu$ mod $2$ we have 
$$g_r(n)=r^{N-k-1}\frac{\vol{P\cap \frac{1}{r}V_n}}{\covol V_n\cap\Lambda}+O(r^{N-k-2}).$$ 
We have $\covol V_n\cap \Lambda=\covol \vec{V}\cap \vec{\Lambda}$, showing that this quantity does depend on $n$.

On the other hand, the function $\mathcal V(\alpha_0,\ldots,\alpha_k)=\vol \{ \tau \in P, l_j(\tau)=\alpha_j,j=0,\ldots,k\}$ is a continuous and piecewise polynomial function with compact support . It follows that 
$$g_r(n)=\frac{r^{N-1-k}}{\covol \vec{V}\cap\vec{\Lambda}}\mathcal{V} \left(\frac{n}{r},\frac{\alpha_1}{r}+2s_1,\ldots,\frac{\alpha_k}{r}+2s_k \right)+O(r^{N-k-2})$$
 which can be written $g_r(n)=r^{N-1-k}f(\frac{n}{r})+O(r^{N-k-2})$ for a continuous and piecewise polynomial function $f$ with compact support. This finally proves the proposition.
\end{proof}

\begin{lemma}\label{integral}
Let $P$ be a polytope with non-empty interior in an Euclidean space $E$ of dimension $N$ and $\Lambda$ be an affine lattice in $E$. We define the radius of $\Lambda$ as the constant (independent of $\lambda$) $\rho(\Lambda)=\sup\{d(x,\lambda), \text{ where }x\in E\text{ satisfies }d(x,\lambda)\le d(x,\mu)\,\forall  \mu \in \Lambda\}$. 

$$\left|\card (\inter P\cap \Lambda)-\frac{\vol P}{\covol \Lambda}\right|
\le\sum_{F} \frac{b_{c_F}\vol F \,\rho(\lambda)^{c_F}}
{\covol(\Lambda)}$$
In this formula $F$ runs over the faces of $P$ of positive codimension $c_F$ and $b_n$ is the volume of the unit ball in $\R^n$. 
\end{lemma}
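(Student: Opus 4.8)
The plan is to estimate the counting error $|\card(\inter P \cap \Lambda) - \vol P / \covol\Lambda|$ by a covering/packing argument that compares the lattice-point count to the volume, and then account for the boundary contribution face by face.

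First I would set up the standard dictionary between lattice points and volume. Fix a fundamental domain $D$ for $\Lambda$, say the union of Voronoi cells, so that each $\lambda \in \Lambda$ carries a copy $D_\lambda = \lambda + D$ of volume $\covol\Lambda$, and $\rho(\Lambda)$ is exactly the circumradius of each $D_\lambda$ (this is the meaning of the definition given in the statement). The quantity $\card(\inter P \cap \Lambda)\cdot\covol\Lambda$ is the total volume of the cells $D_\lambda$ for which $\lambda \in \inter P$; I want to compare this with $\vol P$. The symmetric difference between $\bigcup_{\lambda \in \inter P} D_\lambda$ and $P$ is supported near $\partial P$: a point in one but not the other lies in a cell $D_\lambda$ that meets $\partial P$, hence within distance $\rho(\Lambda)$ of $\partial P$. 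So the error is bounded by $\covol\Lambda^{-1}$ times the volume of the $\rho(\Lambda)$-neighborhood of $\partial P$ — or more precisely the portion of that neighborhood one must actually charge, which I will organize by faces.

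The second, more delicate step is to get the \emph{face-by-face} bound with the specific constants $b_{c_F}\vol F\,\rho(\Lambda)^{c_F}$. Here I would stratify $\partial P$ by its open faces $F$ of codimension $c_F \ge 1$. A cell $D_\lambda$ straddling the boundary "near" the face $F$ contributes error at most its volume, and the union of all such cells is contained in the set of points within distance $\rho(\Lambda)$ of $F$ in the $c_F$ directions transverse to $F$. Projecting onto the affine span of $F$, this transverse slice is a $c_F$-dimensional region of diameter controlled by $\rho(\Lambda)$, whose volume is at most that of a $c_F$-ball of radius $\rho(\Lambda)$, namely $b_{c_F}\rho(\Lambda)^{c_F}$; integrating over $F$ gives $b_{c_F}\vol F\,\rho(\Lambda)^{c_F}$, and after dividing by $\covol\Lambda$ and summing over all positive-codimension faces we obtain the asserted inequality. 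One has to be a little careful that each stray cell is charged to the face of lowest dimension it is near, so that cells near low-dimensional faces are not double-counted against the facets; using the stratification of $P$ into relatively open faces handles this cleanly, since a cell within $\rho(\Lambda)$ of $\partial P$ is within $\rho(\Lambda)$ of some face, and we may charge it to one such face.

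The main obstacle I anticipate is precisely the bookkeeping in this last step: making rigorous the claim that the transverse "fattening" of a codimension-$c_F$ face fits inside a $c_F$-dimensional ball of radius $\rho(\Lambda)$ (rather than, say, an ambient $N$-ball), and ensuring the charging scheme does not lose a dimensional factor or overcount. The volume and lattice-comparison heart of the argument is routine; the sharp constant with $b_{c_F}$ is where care is needed. In the application inside Proposition~\ref{comptage} only the leading term and an $O(r^{\dim P - 1})$ (resp. $O(r^{N-k-2})$) remainder are used, so any bound of the stated shape suffices, but I would still carry out the clean face-by-face estimate as above.
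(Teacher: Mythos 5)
Your proposal follows essentially the same route as the paper: decompose $E$ into Voronoi cells of $\Lambda$, observe that the symmetric difference between $P$ and the union of cells with centers in $\inter P\cap\Lambda$ lies within distance $\rho(\Lambda)$ of $\partial P$, and bound that neighborhood face by face by a normal tube over each face $F$ with fiber a $c_F$-ball of radius $\rho(\Lambda)$. The only refinement to make is to charge \emph{points} of the symmetric difference to the face containing their nearest boundary point (rather than whole cells, which can stick out to distance $2\rho$), and this is exactly how the paper organizes the estimate, so your argument is correct.
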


\begin{proof}
For any $\lambda\in \Lambda$ define its Voronoi cell 
$$V(\lambda)=\{x\in \R^N\text{ s.t. } d(x,\lambda)\le d(x,\mu)\,\forall \mu \in \Lambda\}$$
Then we have $\bigcup_{\lambda\in \Lambda}V(\lambda)=E$, $\vol V(\lambda)=\covol(\Lambda)$ and $\inter V(\lambda)\cap \inter V(\mu)=\emptyset$ for $\lambda\ne \mu$. 
Consider $Q=\bigcup\limits_{\lambda\in \inter P \cap \Lambda}V(\lambda)$ such that $\vol Q=\card (\inter P\cap \Lambda)\vol V(\lambda)$. 
Any point $x$ in the symmetric difference $P\Delta Q$ is at distance at most $\rho=\rho(\Lambda)$ from the boundary of $\partial P$ as we have $\rho=\sup\{d(x,\lambda),x\in V(\lambda)\}$. 

Hence $|\vol P-\vol Q|\le \vol P\Delta Q\le \vol N_{\rho}(\partial P)$ where we have set $N_\rho(\partial P)=\{x\in E, d(x,\partial P)\le \rho\}$. Any point in $N_\rho(\partial P)$ has its closest point in $\partial P$ belonging to some face $F$ of $P$ of positive codimension. This gives the following estimation, proving the lemma.
$$\vol N_\rho(\partial P)\le \sum_{F} \vol F\, b_{c_F} \rho^{c_F}.$$
 \end{proof}

\subsection{A geometric interpretation of the trace}\label{geom}

Consider a surface $\Sigma$ and a weighted multicurve $(\gamma,w)$. The aim of this subsection is to provide a geometric interpretation for the evaluations $\ev_{A_p^\sigma}\tr_p [\gamma,w]$.

Recall that we have set $X(\Sigma)=\homo^{\rm irr}(\pi_1(\Sigma),\rm{SU}_2)/\rm{SU}_2$, $\nu$ the volume form and $\nu_g=\int_{X(\Sigma)}\nu$.
%As in \cite{mn}, the symplectic form $\omega$ is normalized so that we have 
%$$Z_p(\Sigma\times S^1)=\Big(\frac{p}{2}\Big)^{3g-3}\nu_g+O(p^{3g-4}).$$ 
Every component $\gamma_i$ of $\gamma$ defines a map $\theta_i: X(\Sigma)\to [0,1]$ by the formula
\begin{equation}\label{angle}
\tr \rho(\gamma_i)=2\cos \pi\theta_i ([\rho]).
\end{equation}
Suppose that $\gamma$ is a pants decomposition of $\Sigma$ and denote by $\Gamma$ the corresponding graph. Then, the functions are well-known to be action variables on $X(\Sigma)$, see \cite{jw,mn}. Precisely, these maps Poisson commute and the joint map $\Theta=(\theta_1,\ldots,\theta_k):X(\Sigma)\to \R^k$ has image the polytope $P$ defined in the proof of Proposition \ref{comptage}. The Hamiltonian flows of the maps $\theta_i$ induce an action of $\R^E$ which act transitively on the fibers of $\Theta$. Moreover, the kernel of this action is precisely $\frac{1}{2}\vec{\Lambda}$. 

We prove here the following theorem which implies Theorem \ref{witten-sigma} and extends the main result of \cite{mn}.

\begin{theorem}\label{sigma}
Let $\sigma$ be an odd integer and set $A_p=-e^{\frac{i\pi}{p}}$. Suppose that $p$ goes to infinity such that $\sigma$ and $2p$ are coprime. 
Then 
$$\lim_{p\to\infty}\ev_{A_p^\sigma}\tr_p [\gamma,w]=\frac{1}{\nu_g}\int_{X(\Sigma)}\prod_j \tr \rho(\gamma_j)^{a_j\sigma} d\nu(\rho)$$
if $\sum_j b_j$ is even and $0$ otherwise. 
\end{theorem}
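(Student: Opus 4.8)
The plan is to combine the counting results of Section~\ref{proof cyclic} with the symplectic-geometric description of $X(\Sigma)$ recalled just above, and then invoke part (3) of Proposition~\ref{limits} to identify the limit. First I would reduce, as in Subsection~\ref{multi}, the weighted multicurve $[\gamma,w]$ to the case where $\gamma$ is a pants decomposition: a general multicurve can be completed to a pants decomposition, and the extra curves carry weight $0$, contributing trivial factors; alternatively one surgers along $\gamma$ exactly as in the proof of Theorem~\ref{existence}. Having done this, the formula established in the proof of Theorem~\ref{existence} together with Proposition~\ref{comptage} expresses $\tr_p[\gamma,w]$ as a finite sum $\sum_{n\in\Z} A^{2nD} g_r(n)/Z_p(\Sigma\times S^1)$, where $D=\gcd(a_1,\ldots,a_k)$, $g_r(n)=r^{N-1-k}f(n/r)+O(r^{N-k-2})$ and $f$ is the piecewise-polynomial compactly supported function $f(t)=\frac{1}{\covol\vec V\cap\vec\Lambda}\,\mathcal{V}(t,\alpha_1/r+2s_1,\ldots)$ appearing there; recall also $Z_p(\Sigma\times S^1)=r^{\dim P}\vol P/\covol\Lambda+O(r^{\dim P-1})$.

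Next I would evaluate at $A_p^\sigma$ with $A_p=-e^{i\pi/p}$, so that $A_p^{2nD}=e^{2i\pi nD\sigma/p}\cdot 1$ (the sign $(-1)^{2nD}$ is trivial), and apply Proposition~\ref{limits}(3) with $\beta$ chosen so that $2\beta=2D$ and the rescaling variable $\sigma$ as given: the Riemann-sum computation there yields
\[
\lim_{p\to\infty}\ev_{A_p^\sigma}\tr_p[\gamma,w]=\frac{\covol\Lambda}{\vol P}\int_{\R} e^{2i\pi x\sigma D}\,\tilde f(x)\,dx,
\]
where $\tilde f$ collects the finitely many shifts $s$, i.e. $\int_\R e^{2i\pi x\sigma D}\tilde f(x)\,dx=\frac{1}{\covol\vec V\cap\vec\Lambda}\sum_{s}\int e^{2i\pi x\sigma D}\mathcal V(x,\alpha_1/r+2s_1,\ldots)dx$. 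The parity constraint from the proof of Proposition~\ref{comptage} (the congruence class $\nu\in\Z/2\Z$ must be solvable) is exactly the condition that $\sum_j b_j$ be even; when it fails, no $n$ contributes and the limit is $0$, which accounts for the dichotomy in the statement.

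The remaining and main step is to recognize the resulting integral as $\frac{1}{\nu_g}\int_{X(\Sigma)}\prod_j\tr\rho(\gamma_j)^{a_j\sigma}\,d\nu(\rho)$. Here I would use the action-angle description: $\Theta=(\theta_1,\ldots,\theta_k):X(\Sigma)\to P$ is a moment map whose fibers are the orbits of the Hamiltonian $\R^E$-action with kernel $\tfrac12\vec\Lambda$, so the pushforward of $\nu$ under $\Theta$ is $\covol(\tfrac12\vec\Lambda)^{-1}$ times Lebesgue measure on $P$ (up to the normalization that also produces $\nu_g=\vol P\cdot(\text{const})$, matching $Z_p(\Sigma\times S^1)$ asymptotics). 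Combined with $\tr\rho(\gamma_j)=2\cos\pi\theta_j$ and the identity $2\cos\pi\theta=-(e^{i\pi\theta}+e^{-i\pi\theta})$ expanded in characters, the change of variables turns $\int_{X(\Sigma)}\prod_j(2\cos\pi\theta_j)^{a_j\sigma}d\nu$ into a sum over sign vectors $\epsilon\in\{\pm1\}^k$ of integrals $\int_P e^{i\pi\sum_j\epsilon_j' a_j\sigma\theta_j}\,d\text{vol}$, and the linear form $\sum_j n_j a_j = D l_0$ is precisely what conjugates the oscillatory factor $e^{2i\pi x\sigma D}$ against $\mathcal V$; the Fubini/Fourier bookkeeping then matches the two expressions. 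I expect the bookkeeping of normalization constants ($\covol\Lambda$, $\covol\vec V\cap\vec\Lambda$, $\covol\tfrac12\vec\Lambda$, and $\nu_g$ versus $\vol P$) to be the delicate part, best handled by first checking the case $w=0$ (where Theorem~\ref{existence} gives $\eta([\gamma,0])=2^k$ and the geometric side gives $\frac1{\nu_g}\int 1 = 1$ after the factor-of-$2^k$ from $\tr\rho(\gamma_j)^0$... — rather, by normalizing against the known case $\gamma=\emptyset$, $Z_p(\Sigma\times S^1)/\dim V_p(\Sigma)=1$) and then propagating through Lemma~\ref{area} and the product-to-sum formula.
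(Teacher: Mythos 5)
Your overall route coincides with the paper's: extract the limit from the lattice-point/volume asymptotics underlying Proposition \ref{comptage} (via Lemma \ref{integral}), pass to the limit by the Riemann-sum mechanism (Proposition \ref{limits}(3)), and identify the answer through the action-angle fact that $\Theta_*(\nu/\nu_g)$ is normalized Lebesgue measure on the polytope. However, two decisive steps, as you describe them, do not work. The final identification rests on a misreading of the integrand: you write the geometric side as $\int\prod_j(2\cos\pi\theta_j)^{a_j\sigma}d\nu$, i.e. $\bigl(\tr\rho(\gamma_j)\bigr)^{a_j\sigma}$, and propose to expand these powers in characters. What the skein side actually produces, via Lemma \ref{calcul}, is the oscillatory factor $e^{\frac{2i\pi\sigma}{p}\sum_j\epsilon_j a_j n_j}$, so after the Riemann sum each component $j$ contributes exactly the two frequencies $\pm a_j\sigma$, i.e. $2\cos(\pi a_j\sigma\theta_j)=\tr\bigl(\rho(\gamma_j)^{a_j\sigma}\bigr)$ — the trace of the power, which is the intended meaning in the statement (this is the whole point of Theorem \ref{witten-sigma}, where $\sigma$ moves from $A_p$ onto $\rho$). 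With your reading, the binomial spectrum of $(2\cos\pi\theta_j)^{a_j\sigma}$ does not match the single pair of frequencies on the skein side, and the matching step fails; also $2\cos\pi\theta=e^{i\pi\theta}+e^{-i\pi\theta}$, without the sign you introduce.

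Second, the parity dichotomy and the constants are not where you locate them, and they are the substance of the proof. Solvability of the congruence class $\nu$ in Proposition \ref{comptage} is automatic (it is exactly what surjectivity of $L$ provides), so it cannot be the source of the condition that $\sum_j b_j$ be even. In the paper this condition comes from the admissibility constraints ($c_i+c_j+c_k$ odd at each vertex of $\Gamma$): summing them over each connected component $\Gamma_i$ forces $l_i(m,n)\equiv 0\pmod 2$, hence, with $m=n+\epsilon b$, $\sum_j b_j\equiv 0$. Moreover the ratio $\covol(\Lambda\cap V_{m,n})/\covol\hat\Lambda$ must be computed via $\dim H_1(\Gamma;\Z/2)$, $\dim H_1(\hat\Gamma;\Z/2)$ and the exact sequence of the pair $(\hat\Gamma,\Gamma)$, producing a factor $2^{l-1}$ that is cancelled precisely by the density of the $n$'s satisfying the component-wise parity constraints; "normalizing against a known case" cannot recover this component-by-component bookkeeping, which is what guarantees no stray power of $2$ in the final constant. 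You also leave untreated the $\zeta_j=-1$ terms of Lemma \ref{calcul}, which must be shown to contribute $O(1/p)$ (they force $n_j+m_j\in\{-\epsilon_j b_j,\,2r-\epsilon_j b_j\}$, hence boundedly many solutions). Finally, the reduction to a pants decomposition is unnecessary and not free of charge: a weight-$(0,0)$ component satisfies $\langle 0,0\rangle_T=2\,\emptyset$, so both sides pick up a factor $2$ per added curve and this must be tracked rather than dismissed as trivial.
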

Before entering into the proof, let us see how this theorem implies Theorem \ref{witten-sigma}. For any banded link $L\subset \Sigma\times S^1$, we set $\Lambda_\sigma(L)=\lim_{p\to\infty}\ev_{A_p^\sigma}\tr_p(L)$. The compatibly with Kauffman relations implies that $\Lambda_\sigma$ is a linear form on $\mathcal{K}(\Sigma\times S^1,-1)=\mathcal{K}(\Sigma\times S^1)\underset{A=-1}{\otimes}\C$. 
On the other hand, let $\gamma=\gamma_1\cup\cdots\cup \gamma_k$ be a pants decomposition of $\Sigma$ and $\mathcal{K}^{\gamma}(\Sigma\times S^1)$ be the sub-module generated by banded links living in $N$, the product of a neighborhood of $\gamma$ with the circle. 
We denote by $\tau:X(\Sigma\times S^1)\to\{\pm 1\}$ the function satisfying $\rho(t)=\tau(\rho)$Id. Any element of $\mathcal{K}^{\gamma}(\Sigma\times S^1)$ may be viewed as a function on the character variety $X(\Sigma\times S^1)$ which depends only on $\theta_1,\ldots,\theta_k$ and $\tau$. That is, we can associate to any banded link $L\subset N$ a function $F_L(\theta_1,\ldots,\theta_k,\tau)$ satisfying 

$$\prod_i -\tr\rho(L_i)=F_L(\theta_1(\rho),\ldots,\theta_k(\rho),\tau(\rho)).$$

In this setting, we define a linear form $\Lambda_\sigma':\mathcal{K}^\gamma(\Sigma\times S^1)\to \C$ by the formula $\Lambda'_\sigma(L)=\int F_L(\sigma\theta_1,\ldots,\sigma\theta_k,\tau)d\mu$ where $\mu$ is the image of $\nu/\nu_g$ by $(\theta_1,\ldots,\theta_k,\tau)$. Hence, by construction, we have $\Lambda'_1(f)=\frac{1}{\nu_g}\int_{X(\Sigma\times S^1)}f d \nu$. 
Theorem \ref{witten-sigma} consists in proving the formula $\Lambda_\sigma(L)=\Lambda'_\sigma(L)$ for a banded link $L$ projecting without crossing.
On the other hand, weighted multicurves $[\gamma,w]$ with fixed $\gamma$ are generators for $\mathcal{K}^\gamma(\Sigma\times S^1)$, hence it is sufficient to prove the equality $\Lambda_\sigma [\gamma,w]=\Lambda_\sigma' [\gamma,w]$ for all weights $w$. 

%Let $L$ be a banded curve in $\Sigma\times S^1$ which projects on $\Sigma$ without intersection and $\gamma=\gamma_1\cup\cdots\cup\gamma_k$ be an adapted pants decomposition. 

%As elements of the skein module, one can write $L=\sum P_i(A) [\gamma,w_i]$ and we have $$\lim_{p\to\infty}\ev_{A_p^{\sigma}}\tr_p(L)=\sum_i P_i(-1) \lim_{p\to\infty}\ev_{A_p^{\sigma}}\tr_p[\gamma,w_i].$$ 
Fix a weight $w=(a_1,b_1,\ldots,a_k,b_k)$. In the torus $\gamma_j\times [0,1]$, the element $\langle a_j,b_j\rangle_T$ corresponds to the function mapping the representation $[\rho]$ to $\tr \rho({\gamma_j}^{a_j}t^{b_j})$. Hence $F=\prod_j 2\cos(\pi a_j\theta_j)\tau^{b_j}$ and $\Lambda'_\sigma [\gamma,w]=\frac{1}{2\nu_g}\int_{X(\Sigma\times S^1)}\prod_j \tr \rho({\gamma_j}^{\sigma a_j})\tau^{\sigma b_j} d\nu(\rho)$. 

As $X(\Sigma\times S^1)$ is a disjoint union of two copies of $X(\Sigma)$ defined by the equations $t=1$ and $t=-1$, the integral vanishes if $\sum_j b_j$ is odd, and otherwise reduces to the integral in the right hand side of the equation in Theorem \ref{sigma}. The conclusion follows. 

\begin{proof}[Proof of Theorem \ref{sigma}]
With the notation of the proof of Proposition \ref{existence}, we have the formula 
$$\ev_{A_p^\sigma}\tr_p [\gamma,w]=\sum_{m,n,\epsilon,\zeta}e^{\frac{2i\pi\sigma}{p}\sum_j \epsilon_ja_j n_j}\prod_j \delta^p_{n_j+\epsilon_jb_j-\zeta_jm_j}\frac{C_{\gamma,p}(m,n)}{Z_p(\Sigma\times S^1)}.$$ 

For $C_{\gamma,p}(m,n)$ to be non-zero we need to have $0<m_j,n_j<r$. At the same time there exists $k_j\in \Z$ such that $n_j-\zeta_jm_j+\epsilon_j b_j=2rk_j$. Hence if $\zeta_j=1$, we have necessarily $k_j=0$. If $\zeta_j=-1$, we have either $k_j=0$ and $n_j+m_j=-\epsilon_j b_j$ or $k_j=1$ and $n_j+m_j=2r-\epsilon_j b_j$. The number of solutions is bounded with respect to $r$, and such terms can be neglected in the sum. Hence, we can suppose that $\zeta_j=1$ and replace $\delta^p$ with $\delta$. This gives 
$$\ev_{A_p^\sigma}\tr_p [\gamma,w]=\sum_{n,\epsilon}e^{\frac{2i\pi\sigma}{p}\sum_j \epsilon_ja_j n_j}\prod_j \frac{C_{\gamma,p}(n+\epsilon b,n)}{Z_p(\Sigma\times S^1)}+O\Big(\frac{1}{p}\Big)$$ 
where $n+\epsilon b$ is a shorthand for the tuple $(n_j+\epsilon_jb_j)$.

Let $\Gamma$ be the graph adapted to $\gamma$ introduced in the proof of Proposition \ref{comptage}. We also introduce the polytope $P$ and the lattice $\Lambda$. 

Applying Lemma \ref{integral}, we get that $C_{\gamma,p}(m,n)$ is either zero or equivalent to $\frac{\vol(rP\cap V_{m,n})}{\covol(\Lambda\cap V_{m,n})}$ where $V_{m,n}$ is the affine space given by the equations $c_{p_i}=m_i$ and $c_{q_j}=n_j$ for $j=1,\ldots,k$. Supposing non-triviality, we get further 
$$C_{\gamma,p}(m,n)=r^{\card E-2k}\frac{\vol(P\cap V_{\frac{m}{r},\frac{n}{r}})}{\covol(\Lambda\cap V_{m,n})}+O(r^{\card E-2k-1}).$$

Denote by $\hat{\Gamma},\hat{P},\hat{\Lambda}$ the data corresponding to gluing the vertices $p_j$ and $q_j$ of $\Gamma$ for $j=1,\ldots,k$. Then, the same reasoning gives $Z_p(\Sigma\times S^1)=r^{\card E-k}\frac{\vol\hat P}{\covol{\hat{\Lambda}}}+O(r^{\card E-k-1})$.

As $\frac{m}{r}$ and $\frac{n}{r}$ are equal up to $O(\frac{1}{r})$, the volume $\vol (P\cap V_{\frac{m}{r},\frac{n}{r}})$ is equal to $\vol \hat{P}\cap \hat{V}_{\frac{n}{r}}$ up to $O(\frac{1}{r})$ where $\hat{V}_{\frac n r}$ is the subspace of $\R^{\hat{E}}$ defined by the equation $c_j=\frac{n_j}{r}$. 

Let us deal with the covolumes by considering first the case of $\hat{\Gamma}$. Let $C_*(\hat{\Gamma},\Z/2\Z)$ be the cellular complex associated to $\hat{\Gamma}$. A coloring $c: \hat{E}\to \Z$ reduces modulo 2 to an element of $C_1(\hat{\Gamma},\Z/2\Z)$. In this setting, the condition defining the vectorial part of $\hat{\Lambda}$ is simply to be a cycle. It follows that the index of $\hat{\Lambda}$ in $\Z^{\hat{E}}$ is the cardinality of $C_1(\hat{\Gamma},\Z/2\Z)/\ker \partial$ which is $2^{\card \hat{E}-\dim H_1(\hat{\Gamma},\Z/2\Z)}$. 

The same reasoning applies to $\Gamma$, supposing that $\Lambda\cap V_{m,n}$ is non-empty. The vectorial part of $\Lambda \cap V_{m,n}$ is a map $c:E\to\Z$ which vanishes at boundary points and such that $c(e_i)+c(e_j)+c(e_k)$ is even for all triples $(i,j,k)$ incident to a same vertex. Its reduction modulo 2 is a cycle in $C_1(\Gamma,\Z/2\Z)$, hence, the index of $\Lambda \cap V_{m,n}$ is $2^{\card{E}-2k-\dim H_1(\Gamma,\Z/2\Z)}$.
Supposing it is non-zero, we get the following estimate where $h=-\dim H_1(\hat{\Gamma},\Z/2\Z)+\dim H_1(\Gamma,\Z/2\Z)$. 
\begin{equation}\label{truc}\frac{C_{\gamma,p}(m,n)}{Z_p(\Sigma\times S^1)}=r^{-k}\frac{\vol P\cap \hat{V}_{\frac n r}}{\vol \hat P}2^{k+h}+O(r^{-k-1}).
\end{equation}

Let $l$ be the number of connected components of $\Gamma$. The exact sequence of the pair $(\hat{\Gamma},\Gamma)$ gives the formula $h+k-l+1=0$ which replaces the power of $2$ in Equation \eqref{truc} with $2^{l-1}$. 

Consider now the problem of the non-vanishing of $C_{\gamma,p}(m,n)$. An element $c\in \Lambda \cap V_{m,n}$ satisfies $c(p_i)=m_j$, $c(q_i)=n_i$ and $c(e_i)+c(e_j)+c(e_k)=1$ for all triple of edges $e_i,e_j,e_k$ incident to a same vertex. Summing $c+1$ twice over each edge of any component $\Gamma_i$ of $\Gamma$ gives modulo 2 the equality $l_i(c)=\card \partial \Gamma_i$ where $l_i(c)=\sum_{x\in \partial \Gamma_i} c(x)$. Summing over the connected components and observing that $m_j+n_j=b_j$ modulo 2, this gives the identity $\sum_j b_j=0$. 

Conversely, a simple argument involving the homology of $(\Gamma_i,\partial \Gamma_i)$ modulo 2 implies that $C_{\gamma,p}(m,n)$ is non-zero if and only if $l_i(m,n)=0$ for all $i$.  Hence, assuming that $\sum_j b_j$ is even, we get 
$$\ev_{A_p^\sigma}\tr_p [\gamma,w]=\frac{1}{r^k}\sum_{\substack{n,\epsilon \\ l_i(n+\epsilon b,n)=0}}e^{\frac{2i\pi\sigma}{p}\sum_j \epsilon_ja_j n_j} 2^{l-1}\frac{\vol P\cap \hat{V}_{\frac n r}}{\vol \hat P}+O(\frac{1}{p})$$
The parity conditions $l_i$ should divide the sum by $2^l$ but the sum $\sum_{i=1}^l l_i(n+\epsilon b,n)=\sum b_i$ vanishes modulo $2$. Hence they divide the sum by $2^{l-1}$ and this factor cancels. 
%Hence we get  

%$$\ev_{A_p^\sigma}\tr_p [\gamma,w]=\frac{1}{r^k}\sum_{n,\epsilon}e^{\frac{2i\pi\sigma}{p}\sum_j \epsilon_ja_j n_j} \frac{\vol \hat{P}\cap \hat{V}_{\frac n r}}{\vol \hat P}+O(\frac{1}{p})$$
We recognize a Riemann sum: setting $f(x)=\frac{\vol \hat P\cap \hat{V}_x}{\vol \hat P}$ we get 
\begin{eqnarray*}\lim_{p\to \infty}\ev_{A_p}\tr_p [\gamma,w]&=&\int_\R \sum_{\epsilon}e^{2i\pi\sigma\sum_j \epsilon_ja_j x_j}f(x)dx\\
&=&\int_\R\prod_j 2\cos(2\pi\sigma a_jx_j)f(x)dx.
\end{eqnarray*}

This proves the proposition as $f(x)dx$ is equal to the push-forward $\Theta_*\frac{\nu}{\nu_g}$ and by construction $\rho(\gamma_j)$ is conjugate to $\begin{pmatrix} e^{i\pi\theta_j} & 0 \\ 0 & e^{-i\pi \theta_j}\end{pmatrix}$. 
\end{proof}

\section*{Acknowledgements} 
We would like to thank Laurent Charles, Gregor Masbaum and Maxime Wolff for valuable discussions and encouragement.

\end{document}